\definecolor{refkey}{gray}{.75}
\definecolor{labelkey}{gray}{.5}
\newtheorem{Theorem}{Theorem}[section]
\newtheorem{Lemma}[Theorem]{Lemma}
\newtheorem{Corollary}[Theorem]{Corollary}
\newtheorem{Remark}[Theorem]{Remark}
\newtheorem{Claim}[Theorem]{Claim}
\newtheorem{Definition}[Theorem]{Definition}
 \definecolor{darkgreen}{rgb}{0,0.6,0}
\definecolor{light}{gray}{0.9}
\newcommand{\verde}{\textcolor{black}}  %prima darkgreen
\newcommand{\rosso}{\textcolor{black}}% prima red
\newcommand{\rrr}{\textcolor{black}}
\newcommand{\rot}{\textcolor{black}} % era red 
\newcommand{\grun}{\textcolor{black}} % era green 
\newcommand{\ciak}{\textcolor{black}} %correzioni finali
\newcommand{\cA}{\ensuremath{\mathcal A}}
\newcommand{\cB}{\ensuremath{\mathcal B}}
\newcommand{\cC}{\ensuremath{\mathcal C}}
\newcommand{\cF}{\ensuremath{\mathcal F}}
\newcommand{\cG}{\ensuremath{\mathcal G}}
\newcommand{\cI}{\ensuremath{\mathcal I}}
\newcommand{\cL}{\ensuremath{\mathcal L}}
\newcommand{\cM}{\ensuremath{\mathcal M}}
\newcommand{\cP}{\ensuremath{\mathbb P}}  %%% modificato in via eccezionale
\newcommand{\cV}{\ensuremath{\mathcal V}}
\newcommand{\bbE}{{\ensuremath{\mathbb E}} }
\newcommand{\bbG}{{\ensuremath{\mathbb G}} }
\newcommand{\bbI}{{\ensuremath{\mathbb I}} }
\newcommand{\bbL}{{\ensuremath{\mathbb L}} }
\newcommand{\bbN}{{\ensuremath{\mathbb N}} }
\newcommand{\bbP}{{\ensuremath{\mathbb P}} }
\newcommand{\bbQ}{{\ensuremath{\mathbb Q}} }
\newcommand{\bbR}{{\ensuremath{\mathbb R}} }
\newcommand{\bbZ}{{\ensuremath{\mathbb Z}} }
\let\a=\alpha \let\b=\beta   \let\d=\delta  \let\e=\varepsilon
 \let\g=\gamma     \let\k=\kappa  \let\l=\lambda
      \let\o=\omega      
  \let\s=\sigma \let\t=\tau   
\let\D=\Delta   \let\G=\Gamma   
\let\O=\Omega      
\newcommand{\da}{\downarrow}
\newcommand{\toup}{\rightharpoonup}
\newcommand{\be}{\begin{equation}}
\newcommand{\en}{\end{equation}}
\newcommand{\bem}{\begin{multline}}
\newcommand{\enm}{\end{multline}}
\newcommand{\bes}{\begin{equation*}}
\newcommand{\ens}{\end{equation*}}
\author[A.~Faggionato]{Alessandra Faggionato}
\address{Alessandra Faggionato. Department of Mathematics, University La Sapienza, 
  P.le Aldo Moro 2, 00185 Rome, Italy}
\email{faggiona@mat.uniroma1.it}
\newcommand{\ra}{\rangle}
\newcommand{\la}{\langle}
\title[An  ergodic theorem with weights and applications]{\rot{An  ergodic theorem with weights and  applications to random measures, homogenization and  hydrodynamics}}
\begin{document}

\begin{abstract}
We prove a multidimensional  ergodic theorem with weighted averages for the action of the group $\mathbb{Z}^d$ on a probability space. At level $n$ weights are of the form $n^{-d} \psi(j/n)$, $ j\in \mathbb{Z}^d$,  for real  functions $\psi$  decaying suitably fast.  We discuss applications to random measures and to quenched stochastic homogenization of  random walks  on simple point processes with  long-range random jump rates, allowing to remove the technical Assumption (A9) from  \cite[Theorem~4.4]{Fhom1}. This last result concerns also  some semigroup and resolvent convergence particularly relevant for the derivation of the quenched   hydrodynamic limit of interacting particle systems   via homogenization and duality. As a consequence we show that  also the quenched hydrodynamic limit of the symmetric simple exclusion process on point processes stated in   \cite[Theorem~4.1]{F_SEP} remains valid when removing  the above mentioned Assumption (A9).

\medskip
\noindent{\em AMS 2010 Subject Classification}: 
37A30, % Ergodic theorems, spectral theory, Markov operators 
60G55, % point processe
60K37, %processes in random environemtn
35B27.   %homogenization

\end{abstract}

\maketitle

\hspace{8cm}
In memory of Francis Comets.

\bigskip
\emph{
  I desire to thank Francis Comets for his kindness and cheerfulness towards his colleagues and also young people taking
  their first steps in research (this was my case when we met the first time). I  benefited from  both his scientific and human qualities. Thanks!
}

\section{Introduction}
The multidimensional ergodic theorem with $\bbZ^d$--action and arithmetic averages states that, given  $d$ commuting measure-preserving and bijective  maps  $T_1,T_2,\dots, T_d$    on a probability space $(\O, \cF,\bbP)$ and setting  $T^j:= T_1^{j_1} \circ T_2^{j_2} \circ \cdots \circ T_d^{j_d}$ for $j=(j_1,j_2,\dots, j_d) \in \bbZ^d$, the  sequence 
\be\label{onde}
\frac{1}{|I_n|} S_{I_n} f := \frac{1}{|I_n|}  \sum_{j\in I_n} f\circ T^j
\en
converges as $n\to+\infty$  a.s. and in $L^p$ to $\bbE[f|\cI]$ for any $f\in L^p:=L^p(\O,\cF,\bbP)$, $\rot{p\in[1,+\infty)}$. Above $\cI$ is the $\s$--algebra of invariant measurable sets and $I_1\subset I_2 \subset I_3\subset \cdots $ is \rrr{any} increasing sequence  of boxes  with union all $\bbZ^d$. The above result is due to Tempelman \cite{T1972}  (see e.g. \cite[Theorem.~2.8, Chapter~6]{K} and \cite[Theorem~2.6]{S}). The multidimensional ergodic theorem can be derived from the  maximal inequality (\rot{also called} dominated ergodic theorem), which reads  $\bbP( |I_n| ^{-1} S_{I_n} f\geq \a) \leq C \|f\|_1 /\a$ for any non-negative function $f\in  L^1$ \rrr{and $\a>0$}.
The above ergodic  theorem has been further generalized by considering more general bounded  sets $I_n$,  by replacing arithmetic averages by weighted averages \rot{(see below)} and also by considering other groups including of course $\bbR^d$ (see \cite{K,S,T1972,Tbook} and the Introduction of \cite{TS}).

We are interested here to a generalization of the above multidimensional ergodic theorem for more general averages, where the arithmetic  average $|I_n|^{-1} S_{I_n} f $ is replaced by   $\sum_{j\in \bbZ^d} c_{n,j} f\circ T^j$ for \rot{real weights $c_{n,j}$ non necessarily with bounded support in $j$}. We  use here  the term average in a more relaxed way, not imposing that $c_n:=\sum_{j\in \bbZ^d} c_{n,j}$ equals $1$ but just that $c_n$ converges as $n\to +\infty$ to some  \rot{finite constant}. For the applications motivating our search of such a generalization (see below),
the weights  $c_{n,j}$ have not  bounded support, but decay fast enough  as $|j|\to +\infty$.
We point out that the main technical \ciak{difficulty} comes from the the unbounded support, the fact that we deal with real weights does not make any effective difference.

\rot{Some ergodic results with 
 weights also with unbounded support  are discussed for $d=1$} in  \cite[Chapter~8]{K} (and references therein).   \rot{The ergodic theorems  in \cite[Chapter~8.1]{K} refer to $L^p$ convergence. These results are based on manipulations of sequences and e.g.  on spectral theory when working with the Hilbert space $L^2$ as   in \cite{HP}.  The a.s. convergence has been derived for suitable choices of weights in    \cite[Chapter~8.2]{K}, by using also Abel's transformation for series.}
  For weights with bounded support \rot{and $d$ generic} we mention the results in \cite[Section~7]{T1972} and \cite{Tbook}[Section~4, Chapter~6]. \rot{Finally, an ergodic theorem with  weights with unbounded support in the multidimensional case is provided by  \cite[Proposition~5.3]{TS}. This result implies that}  for a large class of non-negative measurable  functions  $\psi$ on $\bbR^d$  not necessarily with compact support (e.g. for $\psi$ measurable and  such that $0\leq \psi(x) \leq C (1+|x|)^{-\b}$ with $\b>d$) given $p\in (1,+\infty)$ and $f\in L^p$
 the integral $n^{-d}\int_{\bbR^d}  \psi(x/n) f\circ  T^x dx$ converges a.s. to  $\rrr{c(\psi)} \bbE[f|\cI]$, where $(T^x)_{x\in \bbR^d}$ is an action of the group $\bbR^d$ on the probability space,  $\cI$ is the $\s$--algebra of invariant sets and $c(\psi):=\int_{\bbR^d} \psi(x) dx$.

  \rot{We stress that  \cite[Proposition~5.3]{TS}   does not cover the case $p=1$ and this is a limitation in the applications motivating our investigation}.
 In Theorem \ref{erg_thm} below  we present a multidimensional ergodic theorem \rot{covering also the case $p=1$ and} implying the following. Given the action $(T^j)_{j\in \bbZ^d}$  of $\bbZ^d$ on the probability space  $(\O,\cF,\bbP)$ and given a  map  $\psi:\bbR^d\to\bbR$
with  $|\psi(x) |\leq C (1+|x|)^{-\b}$ and $\b>2d+2$ \rrr{(and satisfying some minor conditions)},    the weighted average 
$n^{-d}\sum_{j\in \bbZ^d} \psi(j/n) f\circ T^j$ converges a.s. and in $L^p$  as $n\to \infty$  to $\rrr{c(\psi)}\bbE[f|\cI]$ for any $f\in L^p$ with  $p\in [1,+\infty)$.
Although the critical exponent $2d+2$ could not be optimal, this result is enough for our applications  and covers the case $p=1$. \rot{Our proof is different from the one of  \cite[Proposition~5.3]{TS}. The derivation of  Theorem \ref{erg_thm}   relies on a maximal inequality (cf.~Theorem \ref{teo_max_in}) with its own interest. The proof of the maximal inequality is based on a suitable  covering procedure extending the one (similar to Vitali's covering lemma) used in the derivation of Tempelman's multidimensional ergodic theorem  (cf.~\cite[Theorem.~2.8, Chapter~6]{K}).    }

Our applications concern random measures, stochastic homogenization of random walks on simple point \rot{processes}  and \rrr{hydrodynamic} limits of interacting particle systems on simple point processes. 
Let us consider the group $\bbG=\bbZ^d$ or $\bbG=\bbR^d$ acting on $\bbR^d$ by Euclidean translations and on the probability space $(\O,\cF,\bbP)$. We assume that $\bbP$ is stationary and ergodic w.r.t. the $\bbG$-action.
Let $\mu_\o$ be a random locally finite  measure on $\bbR^d$ for which a natural covariant relation is satisfied under the  two above actions  (see Section \ref{appl_random_meas}). By calling $\mu^\e_\o$ the rescaled measure $\mu^\e_\o(A):=\e^d \mu_\o(\e^{-1} A)$,  we show that
\[\lim_{\e\da 0} \int _{\bbR^d} \psi (x )\mu^\e_\o(dx) = m \int_{\bbR^d}  \psi(x) dx\qquad \bbP\text{--a.s.}
\]
where $m$ is the intensity of the measure, assumed to be finite, in the following cases \rrr{where $\psi\in C(\bbR^d)$}: (i)   $|\psi(x) |\leq C (1+|x|)^{-\b}$ with $\b>2d+2$, (ii)  $|\psi(x) |\leq C (1+|x|)^{-\b}$ with $\b>d$ if in addition   for some $\a>1$  \rrr{ $\bbE[\mu_\o(A)^\a]<+\infty$   for any bounded Borel set $A$}. The above result has been derived using
a tail control related to Theorem \ref{erg_thm} for case (i)  and \cite[Proposition~5.3]{TS} for case (ii). Note that a priori the measure $\mu_\o$ is not uniformly bounded on  balls of fixed radius and density fluctuations can be present with balls with arbitrarly large mass. Hence the above result provides a control at infinity of these   fluctuations.
In Section \ref{appl_random_meas} we present also further progresses on random measures  (see Theorems \ref{teo_tartina} and \ref{prop_pizza}, Lemmas \ref{cinguetto} and \ref{cinguetto_bis} and Corollary \ref{lollo_bronzo}). For other ergodic results concerning random measures we mention \cite{DV,NZ1,NZ2} and references therein.

And finally we arrive at our starting motivation. In \cite{Fhom1} we have derived quenched stochastic homogenization results for random walks with long-range random jump rates on simple point processes  on $\bbR^d$ (assuming a stationary and ergodic action of the group $\bbG$). In \cite{F_SEP} we have derived the quenched hydrodynamic limit in path space for random walks as above but  interacting via site exclusion  when the rates are symmetric  (the so called symmetric simple exclusion process). Both \cite{F_SEP} and 
\cite{Fhom1}   aim to  universal results applicable to a large class of models.
The homogenization in \cite{Fhom1}  concerns also  the   convergence of 
the  $L^2$-Markov semigroup and resolvent of the random walk towards the corresponding objects of the Brownian motion with diffusion matrix given by \rot{twice} the effective homogenized matrix. A suitable form of convergence \rot{(cf. \eqref{marvel1},...,\eqref{ondinoB} below)}, also crucial to derive the above mentioned  hydrodynamic limit, is derived in \cite{Fhom1} under an additional assumption called (A9) in \cite{Fhom1} \rot{(and recalled in Section \ref{sec_rw})}, which allows to control at infinity  regions where the simple point process has many points.  \rot{Roughly, Assumption (A9) requires that  the number of points in unit boxes is uniformly bounded or satisfies  a suitable covariance decay}. As a consequence the same assumption appears in \cite{F_SEP} \rot{which relies on \cite{Fhom1}}.  

Starting from our results for random measures  we show that this assumption \rot{(A9)} is not necessary anymore, and  the control at infinity is assured by ergodicity  \rot{itself (which was already between the basic assumptions in \cite{F_SEP} and \cite{Fhom1}). This was not possible at the time of  \cite{F_SEP} and \cite{Fhom1} exactly because a result like our Theorem \ref{erg_thm} was missing}.
For more details we refer to Section \ref{sec_rw} and in particular to Theorem \ref{teo2} and Corollary \ref{scherzetto}. As a consequence, both in \cite{F_SEP} and \cite{Fhom1} Assumption (A9) can now be removed.

\medskip

{\bf Outline of the paper}: In Section \ref{sec_erg_thm} we present our multidimensional ergodic theorem with weighted averages (Theorem \ref{erg_thm})  and the associated maximal inequality (Theorem \ref{teo_max_in}). In Section \ref{appl_random_meas} we discuss some applications to random measures (Theorems \ref{teo_tartina} and \ref{prop_pizza}, Lemmas \ref{cinguetto} and \ref{cinguetto_bis} and Corollary \ref{lollo_bronzo}). In Section \ref{sec_rw} we discuss applications to \rot{stochastic homogenization of} random walks with  \ciak{random jump rates} on simple point processes   (Theorem~\ref{teo2}) and to the hydrodynamic limit of the symmetric simple exclusion process on simple point processes with random jump rates (Corollary~\ref{scherzetto}).  The remaining sections \rot{and the appendix} are devoted to  proofs 
%%%%% pinta
\ciak{(for Theorem~\ref{teo_max_in} see Section~\ref{proof_max_in}, for  Theorem~\ref{erg_thm} see Section~\ref{proof_erg_thm}, for Theorem~\ref{teo_tartina} and Lemma~\ref{cinguetto} see Section~\ref{settimino}, for  Theorem~\ref{prop_pizza} and Lemma~\ref{cinguetto_bis} see Section~\ref{WTS}, for Theorem~\ref{teo2} see Section~\ref{squillano})}.
 \rot{The proofs for Section  \ref{appl_random_meas} (Section \ref{sec_rw}) rely on the results of Section \ref{sec_erg_thm} (Section  \ref{appl_random_meas}, respectively), but the proofs for each section   can be read independently. }

%%%%%%%%%%
\section{An ergodic theorem with weighted averages}\label{sec_erg_thm}
We fix some basic notation.
We set $\bbR_+:=[0,+\infty)$ and  $\bbN_+:=\{1,2,3,\dots \}$.  We denote by $e_1,e_2,\dots, e_d$ the canonical basis of $\bbZ^d$.
We fix $\k\in [1,+\infty]$ and, given $x\in \bbR^d$,  we denote by $|x|$ the $\ell^\k$-norm of $x$ (in particular, $|x|$ is the \ciak{Euclidean} norm of $x$ when $\k=2$).

 Let  $T_1,T_2,\dots, T_d$  be $d$ commuting measure-preserving and bijective  maps on a probability space $(\O, \cF,\bbP)$.   We call  $\cI\subset \cF$ the $\s$--subalgebra given by the invariant sets, i.e.~$\cI:=\{ A\in \cF\,:\, T_k ^{-1} A =A \text{ for all } 1\leq k\leq d\}$. Moreover we set  $T^j:= T_1^{j_1} \circ T_2^{j_2} \circ \cdots \circ T_d^{j_d}$ for $j=(j_1,j_2,\dots, j_d) \in \bbZ^d$. In what follows we write $L^p$ for $L^p(\O, \cF,\bbP)$ \rosso{and we denote by $\bbE[\cdot]$ the expectation w.r.t.~$\bbP$.}

%%%%%%%%%%%%%%%%%%%%%%%%%%%%%%%%%%%%%%%%%%%%%%%%%%%%%%%%%%%%%%%%%%%%%%%%%%%%%%%%%%%%%%%%%%%%%%%%%%%%%%%%%%%%%%%%%%%%%%%%%%%%%%%%%%%%%%%%%%%%%%%%%%%%%%%%%%%%%%%%%%%%%%

\begin{Definition}\label{goodness}
A function $\vartheta:\bbR_+\to \bbR_+$ is called $d$-\emph{good} if it is non-increasing and 
 $\sum_{m=0}^\infty m^{2d}\vartheta(m) \rho(m)^{-1}<+\infty$ for some positive summable function $\rho:\bbN\to (0,+\infty)$.
\end{Definition}
Trivially, given $c>0$ and $\b>2d+2$,  the function   $\vartheta(r) := c\,(1+ r)^{-\b} $  on $\bbR_+$ is $d$-good (take $\rho(m):=(1+m)^{-1-\d}$ with $\d>0$ small). \rot{The reader, interested just in the applications presented in the next sections, can neglect the concept of $d$--good function and simply take $\vartheta(r) := c\,(1+ r)^{-\b} $  with $\b>2d+2$ in Theorems \ref{teo_max_in} and \ref{erg_thm} below.}

\smallskip
\rot{Recall that  $|x|$ denotes the   $\ell^\k$-norm of $x$, where  $\k\in [1,+\infty]$.} Our first result is the following maximal inequality,   (see Section \ref{proof_max_in} for the proof):

\begin{Theorem}[Maximal Inequality] \label{teo_max_in}
For any function $\psi: \bbR^d\to \bbR_+$ such that $\psi(x) \leq \vartheta (|x|)$ for some $d$-good  function $\vartheta:\bbR_+\to \bbR_+$, 
 for any non-negative $f \in L^1 $ and for any $\a>0$, it holds 
\be\label{max_ineq}\bbP\Big( \sup _{n\geq 1} \frac{1}{n^d} \sum_{j\in \bbZ^d} \psi (j/n)f( T^j \o)>\a\Big)  \leq  \frac{C \|f\|_1}{\a}\,,
\en
where   $C=C(d,\vartheta,\rho,\k)$ is a suitable positive constant \verde{and $\rho$ is as in Definition \ref{goodness}}.
\end{Theorem}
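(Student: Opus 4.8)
The plan is to reduce the weighted maximal inequality to the classical multidimensional dominated ergodic theorem (Wiener/Tempelman maximal inequality) $\bbP(\sup_n |I_n|^{-1} S_{I_n} f > \a) \le C\|f\|_1/\a$ for arithmetic averages over boxes $I_n = \{0,1,\dots,n-1\}^d$, applied to the non-negative maximal function. First I would dyadically decompose the weight according to the $\ell^\k$-norm of the argument: writing $A_0 := \{ x : |x| \le 1\}$ and $A_m := \{ x : m < |x| \le m+1\}$ for $m \ge 1$, on the annulus $j/n \in A_m$ we have $\psi(j/n) \le \vartheta(m)$ since $\vartheta$ is non-increasing, so
\[
\frac{1}{n^d}\sum_{j\in\bbZ^d} \psi(j/n) f(T^j\o) \le \sum_{m\ge 0} \vartheta(m)\,\frac{1}{n^d}\sum_{j : |j/n|\in(m,m+1]} f(T^j\o).
\]
The index set $\{ j \in \bbZ^d : |j| \le n(m+1)\}$ is contained in a box of side $\asymp n(m+1)$ (the constant depending only on $\k$, comparing $\ell^\k$ and $\ell^\infty$ norms), which in turn is a union of $O((m+1)^d)$ translates of the box $I_n$; enlarging the annulus to the full ball only increases the sum since $f \ge 0$. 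Hence the inner sum over the $m$-th annulus is bounded, up to a constant $C_1=C_1(d,\k)$, by $(m+1)^d$ times a supremum over translated arithmetic averages $\sup_{a} |I_n|^{-1} S_{a+I_n} f$.

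Next I would take the supremum over $n \ge 1$ inside the $m$-sum and define the global maximal function $Mf(\o) := \sup_{n\ge1}\sup_{a\in\bbZ^d} |I_n|^{-1} S_{a+I_n} f(\o)$. Since $T$ is measure preserving, $Mf \circ T^a$ has the same law as $Mf$, but more to the point the translated boxes $a+I_n$ with $a$ ranging over the $O((m+1)^d)$ shifts are themselves boxes, so each $\sup_n |I_n|^{-1}S_{a+I_n}f$ is dominated by $Mf$; thus
\[
\sup_{n\ge1}\frac{1}{n^d}\sum_{j} \psi(j/n)f(T^j\o) \le C_1 \Big(\sum_{m\ge0}(m+1)^d\,\vartheta(m)\Big)\, Mf(\o).
\]
However the sum $\sum_m (m+1)^d \vartheta(m)$ need not converge under the $d$-good hypothesis (which only controls $\sum m^{2d}\vartheta(m)\rho(m)^{-1}$), so this crude bound is too lossy. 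The fix — and this is the heart of the argument — is to use the extra factors: split using the summable $\rho$, namely bound the $m$-th inner average not by $(m+1)^d Mf$ but observe that by the classical maximal inequality each term $|I_n|^{-1}S_{a+I_n}f$ individually contributes a tail $\le C\|f\|_1/\b$ at level $\b$. Concretely I would write $\a = \sum_{m\ge0} \a_m$ with $\a_m := \a\,\rho(m)/\sum_k\rho(k)$, so that $\{Mf \text{ weighted} > \a\}$ is covered by the events $E_m := \{ \vartheta(m)\cdot C_1 (m+1)^d \sup_{a,n}|I_n|^{-1}S_{a+I_n}f > \a_m\}$; union bound and the classical maximal inequality (applied to each of the $O((m+1)^d)$ boxes and summed, costing another factor $(m+1)^d$) give
\[
\bbP(E_m) \le C_2\,(m+1)^d \cdot \frac{C_1 (m+1)^d \vartheta(m)\|f\|_1}{\a_m} = \frac{C_3\, (m+1)^{2d}\vartheta(m)\,\rho(m)^{-1}}{\a}\,\Big(\sum_k\rho(k)\Big)\|f\|_1,
\]
and summing over $m$ yields exactly $C\|f\|_1/\a$ with $C = C_3\big(\sum_m (m+1)^{2d}\vartheta(m)\rho(m)^{-1}\big)\big(\sum_k\rho(k)\big) < \infty$ by the $d$-goodness of $\vartheta$ — this is precisely why the exponent $2d$ (hence $\b > 2d+2$ in the polynomial case) appears, with one $d$ from covering a ball of radius $n(m+1)$ by boxes of side $n$ and a second $d$ from the union bound over those boxes.

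The main obstacle is bookkeeping the two sources of the factor $(m+1)^d$ correctly and choosing the allocation $\a_m \propto \rho(m)$ so that the series matches the $d$-good condition: one must be careful that the covering of the $\ell^\k$-ball of radius $n(m+1)$ by axis-parallel boxes of side exactly $n$ requires about $(Cm)^d$ boxes with $C$ depending only on $\k$ (trivial for $\k=\infty$, and uniform since all $\ell^\k$ norms are comparable to $\ell^\infty$ on $\bbR^d$), and that throwing away the restriction to the annulus in favour of the full ball is legitimate only because $f \ge 0$. A secondary point is that the classical dominated ergodic theorem is usually quoted for the specific boxes $I_n$ anchored at the origin; applying it to $a + I_n$ is immediate by measure-preservation of $T^a$, but one should state this reduction explicitly. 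Once these are in place the estimate is a direct summation and no further ergodic-theoretic input beyond the classical maximal inequality is needed.
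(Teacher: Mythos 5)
Your outline (annular decomposition of the weight, splitting the threshold as $\a=\sum_m\a_m$ with $\a_m\propto\rho(m)$, then an unweighted maximal inequality on each annulus) is reasonable and close in spirit to the paper's proof, but the step where you invoke ``the classical maximal inequality applied to each of the $O((m+1)^d)$ boxes'' has a genuine gap. Any covering of the ball $\{j:|j|\le n(m+1)\}$ by $O((m+1)^d)$ translates of $I_n=[0,n)^d$ must use translates of the form $nv+I_n$ with $v$ ranging in a fixed set $V_m$ of $O((m+1)^d)$ lattice points: the shifts scale with $n$. After the union bound over $v\in V_m$ you therefore need, for each \emph{fixed} $v$, a weak-$(1,1)$ bound for $\sup_{n\ge 1} n^{-d}S_{nv+I_n}f$ with a constant independent of $v$. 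Your justification (``$S_{a+I_n}f(\o)=S_{I_n}f(T^a\o)$, so measure preservation transfers the inequality'') is valid only when the shift $a$ does not depend on $n$; for $a=nv$ the supremum is no longer a single fixed function precomposed with one $T^a$. Worse, the family $(nv+I_n)_{n\ge1}$ genuinely fails a $v$-uniform weak-$(1,1)$ bound: transferring to $\bbZ^d$ and testing on $g=\mathds{1}_{\{0\}}$ with $v=(m,0,\dots,0)$, the sets $nv+I_n$ for $1\le n\le m$ are pairwise disjoint, so the level set $\{\,\sup_n n^{-d}\sum_{j\in I_n}g(\cdot+nv+j)\ge m^{-d}\}$ has cardinality at least $\sum_{n\le m}n^d\ge c\,m^{d+1}$, whereas weak $(1,1)$ would cap it at $C m^{d}$; the constant grows at least like $|v|\asymp m$ (Tempelman's regularity condition for this non-nested family only gives an upper bound of order $(m+1)^d$). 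Consequently your estimate $\bbP(E_m)\le C(m+1)^{2d}\vartheta(m)\rho(m)^{-1}\|f\|_1/\a$ silently loses at least one further factor of $m$, and the resulting series is no longer controlled by the $d$-good hypothesis $\sum_m m^{2d}\vartheta(m)\rho(m)^{-1}<+\infty$: as written you would only prove the theorem under a strictly stronger decay assumption. (A secondary point: your auxiliary $Mf:=\sup_{n}\sup_{a\in\bbZ^d}|I_n|^{-1}S_{a+I_n}f$ is a.s.\ $+\infty$ for unbounded $f\in L^1$ — take $n=1$ — but you discard that route anyway.)

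The gap is repairable, and the repair simplifies your scheme: do not cover the ball by small boxes at all. Since $\{j:|j|\le n(m+1)\}\subset[-N,N]^d$ with $N\asymp_\k n(m+1)$ and $f\ge0$, one has $n^{-d}\sum_{|j|\le n(m+1)}f(T^j\o)\le C(m+1)^d\,(2N+1)^{-d}S_{[-N,N]^d}f(\o)\le C(m+1)^d M^*f(\o)$, where $M^*$ is the classical maximal function over the nested centered boxes, which does satisfy weak $(1,1)$ with a universal constant; this costs a single factor $(m+1)^d$, and $\sum_m(m+1)^{d}\vartheta(m)\rho(m)^{-1}<+\infty$ is implied by $d$-goodness. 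For comparison, the paper does not reduce to the classical maximal inequality as a black box: it reruns the transference/covering argument directly, extracting for each annulus index $m$ a disjoint subfamily of sets $z+\{|x|<(m+1)k(z)\}$ via the Vitali-type Lemma \ref{tasso}; there the two factors $(m+1)^d$ come from the overlap bound $|I^m_r-I^m_r|\le c(m+1)^dr^d$ and from the enlarged summation range $|j|\le(m+2)\ell$, which is where the exponent $2d$ in Definition \ref{goodness} actually originates.
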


The above maximal inequality is \rot{a form of dominated ergodic theorem \cite{K} and} is 
 the main tool to derive the following result (see Section \ref{proof_erg_thm} for the proof):
\begin{Theorem}[Ergodic theorem for weighted averages] \label{erg_thm}
   Fix a  function   $\psi: \bbR^d\to \bbR$ such that
\begin{itemize}
\item[(i)] $|\psi(x)| \leq \vartheta (|x|)$ for some $d$-good  function $\vartheta$;
\item[(ii)] $\lim_{n \to +\infty} \frac{1}{n^d} \sum_{j\in \bbZ^d} | \psi(j/n)- \psi( (j+e_i)/n)| =0$ 
for any $i=1,..,d$;
\item[(iii)] the limit $c(\psi):= \lim_{n \to +\infty} \frac{1}{n^d} \sum_{j\in \bbZ^d}  \psi(j/n)$ exists and is finite.
\end{itemize}
Then,  for any 
   measurable function   $f:\O \to \bbR$  in $L^p$ for some $\rot{p\in [1,+\infty)}$, it holds 
 \be\label{ciak25}
 \lim _{n\to +\infty}\frac{1}{ n^d} \sum_{j\in \bbZ^d} \psi( j/n) f( T^j \o) =c(\psi) \bbE[f\,| \,\cI]
 \en
 both $\bbP$--a.s. and in $L^p$.
 %The same conclusion holds if, more generally, $\psi(x) \leq \vartheta(|x|_\k)$,  where  $\k\in [1,+\infty]$.
\end{Theorem}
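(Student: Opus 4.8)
The plan is to run the classical density/maximal-inequality argument (``Banach principle''): establish \eqref{ciak25} first on a dense subspace of $L^1$, transfer it to all of $L^1\supseteq L^p$ by means of Theorem~\ref{teo_max_in}, and finally upgrade the a.s.\ convergence to $L^p$ convergence by a truncation argument. Write $A_nf:=n^{-d}\sum_{j\in\bbZ^d}\psi(j/n)\,f\circ T^j$. First I would apply Theorem~\ref{teo_max_in} with the $d$-good weight $\vartheta(|\cdot|)$ and the constant function $f\equiv1$: the quantity $\Lambda:=\sup_{n\ge1}n^{-d}\sum_j\vartheta(|j/n|)$ is deterministic, so the maximal inequality forces $\Lambda<+\infty$, and in particular each $\sum_j\vartheta(|j/n|)$ is finite, so $A_nf$ is an absolutely convergent sum for every $f\in L^1$. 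From this I extract two basic estimates: $\|A_nf\|_p\le\Lambda\|f\|_p$ for all $n$ and $f\in L^p$ (Minkowski plus measure preservation), and, using $|A_nf|\le n^{-d}\sum_j\vartheta(|j/n|)\,|f|\circ T^j$ together with Theorem~\ref{teo_max_in} applied to $|f|$, the weak bound $\bbP(\sup_n|A_nf|>\a)\le C\|f\|_1/\a$ for all $f\in L^1$ and $\a>0$.

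For the dense subspace I would take
\[
 D:=\Big\{\, h+\textstyle\sum_{i=1}^d\big(g_i-g_i\circ T^{e_i}\big)\ :\ h\in L^\infty\ \text{$\cI$-measurable},\ g_i\in L^\infty\ (1\le i\le d)\,\Big\}.
\]
Its $L^1$-density is standard: an $\ell\in L^\infty=(L^1)^*$ annihilating $D$ must satisfy $\ell\circ T_i=\ell$ a.s.\ for each $i$ (test against coboundaries), hence has an $\cI$-measurable version, and then testing against $h=\ell$ forces $\int\ell^2\,d\bbP=0$, i.e.\ $\ell=0$; Hahn--Banach concludes. On $D$ convergence is explicit and uses precisely hypotheses (ii)--(iii). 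If $h$ is $\cI$-measurable then $h\circ T^j=h$ a.s., so $A_nh=\big(n^{-d}\sum_j\psi(j/n)\big)h\to c(\psi)\,h=c(\psi)\bbE[h\mid\cI]$ by (iii). If $f=g_i-g_i\circ T^{e_i}$ with $g_i\in L^\infty$, then splitting the two sums and reindexing $j\mapsto j-e_i$ in one of them gives $A_nf=n^{-d}\sum_j\big(\psi(j/n)-\psi((j-e_i)/n)\big)g_i\circ T^j$, whence $|A_nf|\le\|g_i\|_\infty\,n^{-d}\sum_j|\psi(j/n)-\psi((j-e_i)/n)|\to0=c(\psi)\bbE[f\mid\cI]$ by (ii) (after the shift, that sum is the one appearing in (ii)). By linearity, $A_nh\to c(\psi)\bbE[h\mid\cI]$ a.s.\ for every $h\in D$.

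The transfer step is the technical heart. For $f\in L^1$ put $Lf:=\limsup_nA_nf-\liminf_nA_nf\in[0,+\infty]$; then $L(\cdot)$ is subadditive and $Lf\le2\sup_n|A_nf|$, so by the weak bound $\bbP(Lf>\a)\le2C\|f\|_1/\a$. For $h\in D$ one has $Lh=0$, hence $Lf\le L(f-h)$, so $\bbP(Lf>\a)\le2C\|f-h\|_1/\a$; letting $h\to f$ in $L^1$ (density of $D$) gives $\bbP(Lf>\a)=0$ for every $\a>0$, i.e.\ $A_nf$ converges a.s.\ (finitely, since $\sup_n|A_nf|<\infty$ a.s.) to some $\Phi(f)$. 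Since $|\Phi(f)|\le\sup_n|A_nf|$, the weak bound shows that $\Phi$ is continuous from $L^1$ into the measurable functions equipped with convergence in probability; as $\Phi$ agrees with the (continuous) map $f\mapsto c(\psi)\bbE[f\mid\cI]$ on the dense set $D$, it equals it on all of $L^1$. This proves the a.s.\ part of \eqref{ciak25} for every $f\in L^1$, in particular for every $f\in L^p$.

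It remains to obtain the $L^p$ convergence. Given $f\in L^p$ and $\e>0$, pick $K$ with $\|f-g\|_p<\e$, where $g:=f\,\mathbf{1}_{\{|f|\le K\}}\in L^\infty$. Then $\|A_nf-A_ng\|_p\le\Lambda\|f-g\|_p<\Lambda\e$ for every $n$; by the a.s.\ convergence just proved and the bound $|A_ng|\le\Lambda K$ (a constant, hence in $L^p$), dominated convergence gives $A_ng\to c(\psi)\bbE[g\mid\cI]$ in $L^p$; and $\|c(\psi)\bbE[g\mid\cI]-c(\psi)\bbE[f\mid\cI]\|_p\le|c(\psi)|\,\|f-g\|_p<|c(\psi)|\e$ since $\bbE[\cdot\mid\cI]$ is an $L^p$-contraction. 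Hence $\limsup_n\|A_nf-c(\psi)\bbE[f\mid\cI]\|_p\le(\Lambda+|c(\psi)|)\e$, and $\e\downarrow0$ finishes the proof. I expect the only genuinely delicate point to be the transfer step above; once it is in place, the verification on $D$ is exactly what hypotheses (ii) and (iii) are tailored for, and the $L^p$ upgrade is routine.
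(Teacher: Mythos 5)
Your proof is correct, and its overall architecture coincides with the paper's: the same maximal inequality, the same dense class (bounded invariant functions plus $L^\infty$-coboundaries, which is exactly the decomposition $f=\sum_{i}(g_i-g_i\circ T_i)+h+\varphi$ that the paper imports from \cite{S}), the same oscillation functional for the transfer step, and the same truncation argument for the $L^p$ upgrade. You diverge in two places, both legitimate. First, you prove the density of $D$ yourself via Hahn--Banach (an annihilator of all coboundaries is almost invariant, hence a.e.\ equal to an $\cI$-measurable function, and then kills itself), where the paper simply cites \cite{S}; and you extract the uniform bound $\Lambda<\infty$ by the neat device of feeding $f\equiv 1$ into Theorem \ref{teo_max_in}, where the paper obtains the needed bound from Item (iii) and the $d$-goodness of $\vartheta$. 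Second, and more substantially, you identify the limit by observing that $\Phi$ and $f\mapsto c(\psi)\bbE[f\mid\cI]$ are both continuous from $L^1$ into convergence in probability and agree on the dense set $D$; the paper instead first proves that the limit $\bar f$ is invariant (Borel--Cantelli along a subsequence chosen via Item (ii)), then proves $L^p$ convergence, and only then identifies $\bar f$ through the duality computation $\bbE[W^\psi_n(f)\mid\tilde{\cI}]=a_n\,\bbE[f\mid\tilde{\cI}]$. Your route is shorter and renders the invariance step superfluous. Two cosmetic points, neither affecting correctness: the subadditivity $Lf\le L(f-h)+Lh$ should be read on the full-measure event where $\sup_n|A_n(f-h)|$ and $\sup_n|A_nh|$ are finite (the paper introduces the sets $\Gamma(u)$ precisely for this), and in checking that $\Phi$ agrees with $c(\psi)\bbE[\cdot\mid\cI]$ on $D$ you should record explicitly that $\bbE[g_i-g_i\circ T^{e_i}\mid\cI]=0$, a one-line consequence of $T_iA=A$ for $A\in\cI$.
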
\label{kittel}
\begin{Remark}If  $\psi$ is Riemann integrable, then Item (iii) above holds with  $c(\psi):= \int _{\bbR^d} \psi(x) dx$.
Moreover, the function $\psi(x):= (1+|x|)^{-\b}$ with $\b>2d+2$ \rot{fulfills} all  the  
assumptions of the above ergodic theorem.
\end{Remark}

We  introduce the shorthand notation
\be\label{scorciatoia}
W^\psi_n( f)(\o):=\frac{1}{n^{d}} \sum_{j\in \bbZ^d} \psi( j/n) f( T^j \o)\,.
\en
We point out that, whenever $|\psi(x)|\leq \vartheta(|x|)$ for a $d$--good function $\vartheta$, \rosso{then the series  $\frac{1}{n^d} \sum_{j\in \bbZ^d}  \psi(j/n)$ in Item (iii) of Theorem \ref{erg_thm} is absolutely convergent and therefore well defined. If in addition 
 $f\in L^p\subset L^1$, then   the series defining  $W^\psi_n (f)(\o)$ is absolutely convergent a.s.
 Indeed,
 %since $|\psi(j/n)|\leq \vartheta (|j/n|)$, $\vartheta$ is decreasing and since  there are at most $c \,n^d m^{d-1}$ points $j\in \bbZ^d$ with $m\leq |j/n|<m+1$ where $c=c(d,\k)$, we  have 
%\be\label{frighetto}
 %\frac{1}{n^{d}} \sum_{j\in \bbZ^d}| \psi( j/n)| \leq c  
%\sum_{m=1}^\infty m^{d-1} \vartheta(m)<+\infty\,.
%\en
%The above bound and
 the measure-preserving property of $T^j$  implies that
%\be\label{lavare}
$\bbE[W^{|\psi|}_n (|f|)]= 
 \bbE[|f|] \frac{1}{n^{d}} \sum_{j\in \bbZ^d} |\psi( j/n)|<+\infty
$.}
 %Therefore  the series $W^\psi_n f(\o)$ appearing in \eqref{ciak25} is absolutely convergent for $\bbP$--a.a.~$\o$. We also point out   that, due to \eqref{frighetto}, $\frac{1}{n^d} \sum_{j\in \bbZ^d}  \psi(j/n)$ in Item (iii) of Theorem \ref{erg_thm} is absolutely convergent and therefore well defined.

%%%%%%%%%%%%%%%%%%%%%%%%%%%%%%%%%%%%%%%%%%%%%%%%%%%%%%%%%%%%%%%%%%%%%%%%%%%%%%%%%%%%%%%%%%%%%%%%%%%%%%%%%%%%%%%%%%%%%%%%%%%%%%%%%%%%%%%%%%%%%%%%%%%%%%%%%%%%%%%%%%%%%%%%%%%%%%%%%%%%%%%%%%%%%%%%%%%%%%%%%%%%%%%%%%%%%%%%
\section{Applications to random measures on $\bbR^d$}\label{appl_random_meas}

 \rot{Differently from Section \ref{sec_erg_thm}, in this section $|x|$ will denote   the \ciak{Euclidean} norm of $x$. Moreover, given a topological space $W$, $\cB(W)$ will denote the $\s$--algebra of Borel subsets of $W$}.
 
  Let $\bbG$  be the abelian group $\bbR^d$ or $\bbZ^d$,  endowed with the standard Euclidean topology and the discrete topology, respectively.
 We suppose that $\bbG$ acts on the probability space $(\O, \cF, \cP)$. We call $(\theta_g)_{g\in \bbG}$  this action. This means that   the maps $\theta_g:\O\to\O$ satisfy the following properties:   $ \theta_0=\mathds{1}$;   $ \theta _g \circ \theta _{g'}= \theta_{g+g'}$ for all $g,g'\in \bbG$;  the map $\bbG\times \O \ni (g,\o) \mapsto \theta _g \o \in \O$ is measurable.

A set $A\in \cF$ is called $\bbG$--invariant if $A=\theta_g A$ for all $g\in \bbG$. 

\medskip

\noindent
{\bf Assumption $\mathbf{1}$}: \emph{We assume that 
 $\cP$ is $\bbG$-stationary, i.e.~$\cP\circ \theta_g^{-1}=\cP$  for all $g\in \bbG$. We also assume that  $\cP$ is ergodic, i.e. $\cP(A)\in\{0,1\}$ for any $\bbG$-invariant set $A\in \cF$.}

 \medskip

We fix a proper action  $(\t_g )_{g\in \bbG}$ of $\bbG$ on $\bbR^d$  given  by  translations. 
More precisely,  for a given invertible $d\times d$ matrix $V$, we have 
\be\label{trasferta1}
\t_g x = x + V g\,,  \qquad \forall x \in \bbR^d\,,\; \forall g \in \bbG\,.\en
In several applications $V=\mathbb{I}$, thus implying that   $\t_g x = x+g$. The case $V\not=\mathbb{I}$ is particularly relevant when treating e.g. crystal lattices \rot{\cite{F_SEP,Fhom1}}. \rot{An example with $V\not=\mathbb{I}$ is given in Section \ref{miele}.}

We denote by $\cM$ the metric space of locally finite non-negative measures on $\bbR^d$ with $\s$--algebra of measurable sets given by the Borel $\s$-algebra $\cB(\bbR^d)$  \cite[Appendix~A2.6]{DV}. \rot{The definition of the metric $d_\cM$ on $\cM$ is rather involved and is given in \cite[Eq.~(A2.6.1)]{DV}. We will not use the explicit expression of $d_\cM$. We just} recall that $\nu_n\to\nu$ in $\cM$ if and only if $\int_{\bbR^d} f(x)d\nu_n(x) \to \int_{\bbR^d} f(x)d\nu(x)$ for each real continuous function $f$  with compact support (shortly $f\in C_c (\bbR^d)$).  
The action of $\bbG$ on $\bbR^d$ naturally induces an action of $\bbG$ on  $\cM$, which (with some abuse of notation) we still denote by $(\t_g)_{g\in \bbG}$. In particular,  $\t_g : \cM\to \cM$ is given by 
 $\t_g \mathfrak{m} (A):= \mathfrak{m} (\t_{g} A)$ for all $ A\in \cB(\bbR^d)$ and  it holds
 \be\label{pollice}\int_{\bbR^d} f(x) d (\t_g \mathfrak{m})(x) =\int_{\bbR^d} f(\t_{-g}x) d \mathfrak{m}(x)\,.
 \en
%%%%%%%%%%%%%%%%%%%%%%%%%%%%%%%%%%%%%%%%%%%%%%%%%%%%%%%%%%%%%%%%%%%%%%%%%%%%%%%%%%%%%%%%%%%%%%%%%%%%%%%%%%%%%%%%%%%%%%%%%%%%%%%%%%%%%%%%%%%%%%%%%%%%%%%%%%%
\subsection{$\bbG$--stationary random measure $\mu_\o$ and rescaled random measure $\mu_\o^\e$}\label{cividale}
We suppose now to have a  random   {locally finite non-negative} measure $\mu_\o$  {on $\bbR^d$}, i.e. a measurable  map $\O \ni \o \mapsto \mu_\o \in \cM$. 
 The fundamental relation between the above two actions of $\bbG$ and  the random measure $\mu_\o$ is given by the following assumption:

 \smallskip 

\noindent
{\bf Assumption $\mathbf{2}$}: \emph{The random measure $\mu_\o$ is  $\bbG$--stationary: for all $\o \in \O$ and for all  $g \in \bbG$ it holds 
$ \mu_{\theta_g\o}= \t_g \mu_\o$. }

\smallskip

Calling $v^1,v^2,..., v^d$ the columns of  $V$,  we  introduce the parallelepiped
\be\label{birra}
\D:=\Big \{ \sum_{i=1}^d t_i v^i\,:\, 0 \leq t_i <1\Big\} \,.
\en
When $\bbG=\bbR^d$ one can also take   for $\D$ any bounded  Borel subset of $\bbR^d$ with finite and positive Lebesgue measure (e.g. $\D=[0,1)^d$).

\begin{Definition}\label{def_m}
\rot{The \emph{intensity} $m$ of the random measure $\mu_\o$ is defined as   $m:=\ell(\D)^{-1} \int_\O d\cP(\o) \mu_\o(\D)$, where $\ell(\D)$ is the  Lebesgue measure  of  $\D$.}
\end{Definition}

By the $\bbG$--stationarity of $\cP$, if $\bbG=\bbR^d$ then $\int_\O d\cP(\o) \mu_\o(U)=m \ell(U)$ for any \rosso{bounded} Borel set $U\subset \bbR^d$, while if $\bbG=\bbZ^d$ then $\int_\O d\cP(\o) \mu_\o(U)=m \ell(U)$ for any \rosso{bounded} set $U$ which is a union of sets of the form $\t_g \D$ with $g\in \bbZ^d$.

 \medskip

%\noindent {\bf Assumption $3$}: \emph{The intensity $m$ is finite and positive.}

% \medskip
% If the intensity $m$ was zero, then $\mu_\o$ would be  the zero measure for $\bbP$--a.a.~$\o$. We have excluded this trivial case (for which Theorem \ref{teo_tartina} and Lemma \ref{cinguetto} below are trivially true) also because in the next section we strictly need $m>0$ to introduce the Palm distribution.

We introduce the rescaled measure $\mu^\e_\o$ defined as  
\be \label{scaletta}
\mu^\e_\o(A) = \e^d \mu_\o( \e^{-1} A) \qquad \forall A\in \cB(\bbR^d)\,.
\en
Note that it holds
\be\label{pioli} \int_{\bbR^d}  f(x)d \mu^\e_\o(x)=  \e^d \int_{\bbR^d}  f (\e x)d\mu_\o(x)
\en for any Borel function $f:\bbR^d\to\bbR_+$.

\begin{Definition}\label{def_goloso} \rot{We denote by $C_*(\bbR^d)$ the set of functions 
 $f\in C (\bbR^d) $ for which,  given  any $\b >0$, there exists 
    $ C>0$  such that   $|f(x)|\leq C (1+|x|)^{-\b}$  for all $x \in \bbR^d$. }
\end{Definition}

%libertino
  We can now state our  limit theorem for $ \mu^\e_\o$, where convergence is stronger than the one in  $\cM$ itself (see Section \ref{settimino} for the proof):
\begin{Theorem}\label{teo_tartina} Suppose Assumptions $1$ and  $2$ to be valid  and  that the intensity $m$ is finite. Then
there exists a $\bbG$--invariant set $\cA\subset \O$ with $\cP(\cA)=1$ and with the following property.
Let $\varphi :\bbR^d\to \bbR$ be a continuous function such that, for some $C>0$ and  $\b>2d+2$,  $|\varphi (x)| \leq C (1+ |x|)^{-\b}$ for all $x\in \bbR^d$. Then for all $\o\in \cA$
the integral $\int_{\bbR^d}| \varphi ( x)| d\mu^\e_\o(x)$ is finite and 
it holds
\be\label{piazza_fiume}
\lim_{\e\da 0 }\int_{\bbR^d} \varphi ( x) d\mu^\e_\o(x) = m \int _{\bbR^d} \varphi(x) dx\,.
\en
In particular, \eqref{piazza_fiume} holds for all $\o \in \cA$ and all \rot{$\varphi\in C_*(\bbR^d)$}.
\end{Theorem}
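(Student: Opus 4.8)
The strategy is to rewrite $\int_{\bbR^d}\varphi\,d\mu^{1/n}_\o$ as a weighted ergodic average $W^\psi_n(F)(\o)$ up to a negligible error, and then to let the maximal inequality (Theorem~\ref{teo_max_in}) furnish every uniform estimate needed to turn the various a.s.\ statements into the single a.s.\ statement asserted. Set $F(\o):=\mu_\o(\D)$ and $\psi:=\varphi\circ V$. The cells $\D+Vg$, $g\in\bbZ^d\subset\bbG$, partition $\bbR^d$ ($\D$ is the fundamental parallelepiped of the lattice $V\bbZ^d$), and Assumption~$2$ gives $\mu_\o(\D+Vg)=\mu_{\theta_g\o}(\D)=F(\theta_g\o)$. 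Writing $\int_{\bbR^d}\varphi\,d\mu^{1/n}_\o=n^{-d}\int_{\bbR^d}\varphi(x/n)\,d\mu_\o(x)$ and summing over the cells,
\be
\int_{\bbR^d}\varphi\,d\mu^{1/n}_\o = W^\psi_n(F)(\o)+R_n(\o),
\en
where $R_n(\o):=n^{-d}\sum_{g\in\bbZ^d}\int_{\D}\big(\varphi(\tfrac{y+Vg}{n})-\varphi(\tfrac{Vg}{n})\big)\mu_{\theta_g\o}(dy)$ collects the cell-wise discretization errors. Since $V$ is invertible and $|\varphi(x)|\le C(1+|x|)^{-\b}$ with $\b>2d+2$, $\psi$ is continuous with $|\psi(z)|\le C'(1+|z|)^{-\b}$, and a routine check (continuity and polynomial decay, $\b>2d+2$ giving the requisite summability) yields hypotheses (i)--(iii) of Theorem~\ref{erg_thm} with $c(\psi)=\int_{\bbR^d}\psi=|\det V|^{-1}\int_{\bbR^d}\varphi(x)\,dx$. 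Moreover $F\ge0$ and $F\in L^1$, because $\bbE[F]=m\,\ell(\D)=m|\det V|<+\infty$. Thus Theorem~\ref{erg_thm}, applied with $p=1$ (the case this paper adds to \cite{TS}), gives $W^\psi_n(F)\to c(\psi)\,\bbE[F\,|\,\cI]$ both $\cP$-a.s.\ and in $L^1$, where $\cI=\{A\in\cF:\theta_{e_k}^{-1}A=A,\ 1\le k\le d\}$.

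The next step is to identify $\bbE[F\,|\,\cI]=m|\det V|$ a.s., so that $c(\psi)\,\bbE[F\,|\,\cI]=m\int_{\bbR^d}\varphi$. When $\bbG=\bbZ^d$ this is immediate, since $\cI$ is then the $\sigma$-algebra of $\bbG$-invariant sets, which is $\cP$-trivial by ergodicity, so $\bbE[F\,|\,\cI]=\bbE[F]=m|\det V|$. When $\bbG=\bbR^d$ the algebra $\cI$---invariance under integer translations only---need not be trivial, and handling this is the one genuinely new point. I would compare two ergodic theorems along matching sequences: the classical $\bbZ^d$-ergodic theorem over cubes $\Lambda_N\uparrow\bbZ^d$ gives $|\Lambda_N|^{-1}\sum_{g\in\Lambda_N}F(\theta_g\o)\to\bbE[F\,|\,\cI]$ a.s., while by the tiling $\sum_{g\in\Lambda_N}F(\theta_g\o)=\mu_\o(Q_N)$ for the corresponding parallelepiped $Q_N$, with $\ell(Q_N)=|\det V|\,|\Lambda_N|$, and the classical spatial ergodic theorem for $\bbR^d$-stationary ergodic random measures (cf.\ \cite{DV,NZ1,NZ2}) gives $\ell(Q_N)^{-1}\mu_\o(Q_N)\to m$ a.s.; equating the two limits yields $\bbE[F\,|\,\cI]=m|\det V|$.

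It then remains to control $R_n$, to pass from $\e=1/n$ to $\e\da0$, and to obtain one full-measure set valid for all admissible $\varphi$; the maximal inequality supplies all of this. Applying Theorem~\ref{teo_max_in} to $F$ with the admissible weight $z\mapsto(1+|z|)^{-\b}$ gives a full-measure set $\cA_0$ on which $\Phi(\o):=\sup_{n\ge1}n^{-d}\sum_{g\in\bbZ^d}(1+|g|/n)^{-\b}F(\theta_g\o)<+\infty$. Since $F(\theta_g\o)=\mu_\o(\D+Vg)$ and the cells tile $\bbR^d$, choosing $n$ of order $r$ and retaining only the indices with $|g|$ of order at most $r$ (for which $(1+|g|/n)^{-\b}$ is bounded below) converts this into the volume bound $\mu_\o(B_r)\le C(\o)(1+r)^d$ for all $r\ge0$ and $\o\in\cA_0$, $B_r$ being the $|\cdot|$-ball. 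As $\b>2d+2>d$, this bound renders the polynomial tails of $\varphi$ negligible at every scale: it gives $\int_{\bbR^d}|\varphi|\,d\mu^\e_\o<+\infty$ for every $\e>0$, and even $\sup_{0<\e\le1}\int_{\bbR^d}|\varphi|\,d\mu^\e_\o\le C(\o)$, which proves the finiteness assertion. Combined with the uniform continuity of $\varphi$ on compacta (to estimate $\varphi(\tfrac{y+Vg}{n})-\varphi(\tfrac{Vg}{n})$ on the bounded cells) it further gives $R_n(\o)\to0$ and shows that $\e\mapsto\int_{\bbR^d}\varphi\,d\mu^\e_\o$ oscillates by $o(1)$ over each interval $(\tfrac1{n+1},\tfrac1n]$, which upgrades convergence along $\e=1/n$ to convergence as $\e\da0$.

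Finally, to get a single $\cA$ working for all $\varphi$: fix $\b'\in(2d+2,\b)$, so that every admissible $\varphi$ belongs to the separable space $X_{\b'}:=\{g\in C(\bbR^d):(1+|x|)^{\b'}g(x)\to0\text{ as }|x|\to\infty\}$ (with norm $\sup_x(1+|x|)^{\b'}|g(x)|$, isometric to $C_0(\bbR^d)$). Take a countable family $(\varphi_k)\subset C_c(\bbR^d)$ dense in $X_{\b'}$; each $\varphi_k$ trivially satisfies the hypotheses of Theorem~\ref{erg_thm}, so, by the two previous paragraphs, on the full-measure set $\cA:=\cA_0\cap\{\o:\bbE[F\,|\,\cI](\o)=m|\det V|\}\cap\bigcap_k\{\o:W^{\psi_k}_n(F)(\o)\to c(\psi_k)\,\bbE[F\,|\,\cI](\o)\}$ the convergence \eqref{piazza_fiume} holds for every $\varphi_k$ (as $\e\da0$). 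For an arbitrary admissible $\varphi$ one picks $\varphi_k$ with $\|\varphi-\varphi_k\|_{X_{\b'}}$ small and uses the uniform-in-$\e$ tail bound valid on $\cA_0$ to control $\int_{\bbR^d}|\varphi-\varphi_k|\,d\mu^\e_\o$ and $\int_{\bbR^d}|\varphi-\varphi_k|\,dx$, obtaining \eqref{piazza_fiume} for $\varphi$ on $\cA$; replacing $\cA$ by a $\bbG$-invariant full-measure subset is routine (a Fubini argument, noting that the conclusions at $\o$ and at $\theta_g\o$ differ only by the translation $\e Vg\to0$ of the test function). The last assertion is then immediate, as every $\varphi\in\cS(\bbR^d)$ obeys $|\varphi(x)|\le C_\b(1+|x|)^{-\b}$ for all $\b>0$. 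I expect the main obstacle to be not any single step but the uniformity bookkeeping---making $R_n$, the $\e$-oscillation and the $\varphi\approx\varphi_k$ approximation all uniform simultaneously in the scale $\e$ and over the uncountable family of test functions---which is exactly what Theorem~\ref{teo_max_in}, with its supremum over $n$, is built to deliver; the $\bbR^d$-versus-$\bbZ^d$ ergodicity point of the second paragraph is the other place requiring genuine care.
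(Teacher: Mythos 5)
Your argument is correct, and it reaches the theorem by a genuinely different route than the paper's, although the core tool (Theorem \ref{erg_thm} applied to $F(\o)=\mu_\o(\D)$) is the same. The paper splits $\varphi=\varphi_\ell+(\varphi-\varphi_\ell)$ with $\varphi_\ell\in C_c(\bbR^d)$, handles the compactly supported part by the classical spatial ergodic theorem for random measures (Claim \ref{sale78}, via \cite[Theorem~10.2.IV]{DV}), and handles the tail by Lemma \ref{cinguetto}, whose proof applies Theorem \ref{erg_thm} only to the cutoff weights $\psi_\ell(x)=\vartheta(|x|)\mathds{1}(|x|\geq \ell)$ and -- this is the trick you bypass -- never identifies $\bbE[F\,|\,\cI]$, since the resulting limit is multiplied by $\int\psi_\ell\to 0$. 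You instead apply Theorem \ref{erg_thm} directly to $\psi=\varphi\circ V$, which forces you to (a) identify $\bbE[F\,|\,\cI]=m|\det V|$ when $\bbG=\bbR^d$ (your matching of the $\bbZ^d$ ergodic theorem over cubes against the spatial ergodic theorem along the tiled parallelepipeds is correct, and is the one ingredient with no counterpart in the paper's proof) and (b) control the discretization error $R_n$. Your extraction of the a.s.\ volume bound $\mu_\o(B(r))\leq C(\o)(1+r)^d$ directly from the maximal inequality is a clean observation that simultaneously yields $R_n\to 0$, the finiteness assertion, a uniform-in-$\e$ tail bound (slightly stronger than \eqref{tordo}), and the interpolation from $\e=1/n$ to $\e\da 0$; the paper obtains the corresponding tail control only through the full Lemma \ref{cinguetto}. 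Three points you compress but which do require the indicated care: hypothesis (ii) of Theorem \ref{erg_thm} for a merely continuous $\varphi$ with polynomial decay needs the same compact-part/tail splitting you use elsewhere (it is not a one-line Riemann-sum remark); since $\b$ depends on $\varphi$, your exponent $\b'$ must range over a countable set, e.g.\ rationals in $(2d+2,+\infty)$, exactly as in the paper's opening reduction to rational $C,\b$; and the $\bbG$-invariance of the final set for $\bbG=\bbR^d$ is not really a Fubini argument but the translation-stability argument of the paper's Claim \ref{claim_inv} and Appendix \ref{WT1}, based on $\int\varphi\,d\mu^\e_{\theta_g\o}=\int\varphi(\cdot-\e Vg)\,d\mu^\e_\o$ with $\e Vg\to 0$ -- your parenthetical points at the right mechanism. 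None of these is a gap, only compressed bookkeeping.
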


%%%%%%%%%%%%%%%%%%%%%%%%%%%%%%%%%%%%%%%%%%%%%%%%%%%%%%%%%%%%%%%%%%%%%%%%%%%%%%%%%%%%%%%%%%%%%%%%%%%%%%%%%%%%%%%%%%%%%%%%%%%%%%%%%%%%%%%%%%%%%%%%%%%%%%%%%%%%%%%%%%%%%%%%%%%%%%%%%%%%%%%%%%%%%%%%%%%%%%%%%%%%%%%%%%%%%%%%%%%%%%%%%%%%%%%%%%%%%%%%%%%%%%%%%%%%%%%%%%%%%%%%%%%%%%%%%%%%%%%%%%%%%%%%%%%%%%%%%%%%%%%%%%%%%%%%%%%%%

%Then \eqref{piazza_fiume} reads $\lim_{\e\da 0 }\int_{\bbR^d}  d\mu^\e_\o(z) \varphi ( z) = m \int _{\bbR^d} \varphi(z) dz$.

%\[
% \e^d  \sum _{x\in \hat \o } n_x(\o) \varphi (\e x)= \int _{\bbR^d}  \varphi(u) d\mu^\e_\o(u)\,,  \] where  $\mu^\e_\o$ is the rescaled atomic measure
% \be\label{ingegnere}  \mu^\e_\o:= \e^d \sum_{x\in \hat \o} n_x (\o) \d_{\e x}\,.\en

The proof of Theorem \ref{teo_tartina} will use the following technical lemma, which will be important also for our applications to stochastic homogenization \rosso{and hydrodynamics} (see Section \ref{settimino} for the proof):
\begin{Lemma}\label{cinguetto} Suppose Assumptions $1$ and  $2$ to be valid  and  that the intensity $m$ is finite.
Then
there exists a $\bbG$--invariant set $\cC\subset \O$ with $\cP(\cC)=1$ and with the following property.
Fixed $\b>2d+2$ set $\vartheta (r):=(1+r)^{-\b}$ for $r\geq 0$. 
Then 
  for all $\o\in \cC$  \rosso{we have  $\int_{\bbR^d} \vartheta (|x|)d \mu_\o^\e (x) <+\infty$  and} 
\be\label{tordo}
\lim_{\ell \uparrow+ \infty}\, \varlimsup_{\e \da 0} \int_{\{|x|\geq \ell\}} \vartheta (|x|)d \mu_\o^\e (x) =0\,.
\en
\end{Lemma}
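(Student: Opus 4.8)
The plan is to reduce the decay estimate for $\mu^\e_\o$ at infinity to an application of the Maximal Inequality (Theorem \ref{teo_max_in}). First I would introduce a discretization of space: cover $\bbR^d$ by the translates $\t_g\D$, $g\in\bbZ^d$ (these tile $\bbR^d$ when $\bbG=\bbZ^d$; when $\bbG=\bbR^d$ one can just use the unit cubes $k+[0,1)^d$, $k\in\bbZ^d$, and the argument is the same), and set $\xi(\o):=\mu_\o(\D)$ (resp.\ $\mu_\o([0,1)^d)$). By Assumption 2 the mass of $\mu_\o$ on $\t_g\D$ equals $\xi(T^g\o)$ where $T^g=\theta_g$ acts as a measure-preserving $\bbZ^d$-action, and by the finiteness of the intensity $m$ we have $\xi\in L^1(\cP)$ with $\bbE[\xi]=\ell(\D)\,m<\infty$. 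The integral $\int_{\{|x|\ge\ell\}}\vartheta(|x|)\,d\mu^\e_\o(x)$ becomes, after the change of variables \eqref{pioli}, $\e^d\int_{\{|x|\ge\ell/\e\}}\vartheta(\e|x|)\,d\mu_\o(x)$; splitting the region $\{|x|\ge\ell/\e\}$ into the cells $\t_g\D$ that meet it and bounding $\vartheta(\e|x|)$ on each such cell by $\vartheta(\e\,\mathrm{dist}(0,\t_g\D))$ (using monotonicity of $\vartheta$), one arrives at an upper bound of the form
\[
\int_{\{|x|\ge\ell\}}\vartheta(|x|)\,d\mu^\e_\o(x)\ \le\ C\,\frac{1}{n^d}\sum_{j\in\bbZ^d}\widetilde\psi(j/n)\,\xi(T^j\o)
\]
with $n=\lceil 1/\e\rceil$ and $\widetilde\psi$ a suitable function dominated by a $d$-good $\vartheta$-type envelope — indeed, since $\b>2d+2$, $\widetilde\psi(x)=C(1+|x|)^{-\b}\mathbf 1_{\{|x|\ge\ell\}}$ is dominated by the $d$-good function $r\mapsto C(1+r)^{-\b}$.

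Next I would use the Maximal Inequality to get an a.s.\ finite envelope. Applying Theorem \ref{teo_max_in} with $\psi(x)=C(1+|x|)^{-\b}$ (a $d$-good function since $\b>2d+2$) and $f=\xi\in L^1$ gives that $\sup_{n\ge1}\frac1{n^d}\sum_{j}\psi(j/n)\xi(T^j\o)=:M(\o)<\infty$ for $\cP$-a.a.\ $\o$; call $\cC$ the $\bbG$-invariant set of such $\o$ (invariance follows because the supremum over $n$ is a shift-invariant quantity, or one can intersect over the $\bbG$-orbit). This immediately yields $\int_{\bbR^d}\vartheta(|x|)\,d\mu^\e_\o(x)\le C\,M(\o)<\infty$ for all $\o\in\cC$, proving the finiteness claim. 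For the limit \eqref{tordo}, I would fix $\ell$ and use the same discretization to write, for each $\e$,
\[
\int_{\{|x|\ge\ell\}}\vartheta(|x|)\,d\mu^\e_\o(x)\ \le\ \frac1{n^d}\sum_{j\in\bbZ^d}\psi_\ell(j/n)\,\xi(T^j\o)
\]
where $\psi_\ell$ is (up to a fixed multiplicative constant absorbing the cell-geometry) the function $x\mapsto C(1+|x|)^{-\b}\mathbf 1_{\{|x|\ge\ell-c_0\}}$ for a fixed constant $c_0$ depending only on the diameter of $\D$. Since $\psi_\ell$ satisfies (i)–(iii) of Theorem \ref{erg_thm} (it is dominated by a $d$-good function, its discrete total-variation differences vanish in the limit because $\psi_\ell$ is Lipschitz away from a bounded set and the number of lattice points near $\{|x|=\ell-c_0\}$ is $O(n^{d-1})$, and $c(\psi_\ell)=\int_{\{|x|\ge\ell-c_0\}}C(1+|x|)^{-\b}dx$ exists by Riemann integrability), the ergodic theorem gives
\[
\varlimsup_{\e\da0}\int_{\{|x|\ge\ell\}}\vartheta(|x|)\,d\mu^\e_\o(x)\ \le\ c(\psi_\ell)\,\bbE[\xi\mid\cI]\ =\ c(\psi_\ell)\,\ell(\D)\,m
\]
for $\o$ in a further full-measure $\bbG$-invariant set, using ergodicity to identify $\bbE[\xi\mid\cI]=\bbE[\xi]$. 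Finally, since $\int_{\{|x|\ge\ell-c_0\}}(1+|x|)^{-\b}dx\to0$ as $\ell\to\infty$ (the integrand is integrable on $\bbR^d$ because $\b>d$), letting $\ell\uparrow\infty$ gives the claim; intersecting the countably many full-measure sets obtained for $\ell\in\bbN$ produces the single $\bbG$-invariant set $\cC$.

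The main obstacle I expect is bookkeeping in the discretization step: one must carefully control how $\vartheta(\e|x|)$ varies over a cell $\t_g\D$ and how the indicator $\mathbf 1_{\{|x|\ge\ell/\e\}}$ interacts with cells straddling the sphere, to produce a clean comparison with a function of the form $\psi_\ell(j/n)$ that genuinely satisfies hypotheses (i)–(iii) of Theorem \ref{erg_thm} (in particular hypothesis (ii), the vanishing of discrete total-variation increments, where one needs the boundary layer $\{|x|\approx\ell-c_0\}$ to contribute $o(n^d)$). A secondary point is ensuring the exceptional null sets can all be chosen $\bbG$-invariant; this is routine since all quantities involved (suprema over $n$, limsups over $\e$) are shift-covariant, so the bad sets are $\bbG$-invariant up to null sets and can be enlarged to genuinely invariant null sets. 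Everything else — the change of variables, monotonicity bounds, and the final $\ell\to\infty$ passage — is straightforward.
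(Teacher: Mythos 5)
Your proposal is correct and follows essentially the same route as the paper's proof: discretize via the cell variable $f(\o)=\mu_\o(\D)\in L^1(\cP)$, dominate the rescaled tail integral by a weighted average $n^{-d}\sum_{z}\psi_\ell(z/n)f(\theta_z\o)$ using the monotonicity of $\vartheta$ and the bounded cell diameter, apply Theorem \ref{erg_thm} for each fixed $\ell$, and let $\ell\uparrow\infty$ using the integrability of $(1+|x|)^{-\b}$. The only minor divergences are that the paper derives the finiteness of $\int_{\bbR^d}\vartheta(|x|)\,d\mu^\e_\o$ from the tail condition \eqref{tordo} itself (Claim \ref{claim_fin}) rather than from the maximal inequality, and that the measurability and $\bbG$--invariance of $\cC$, which you defer as routine, are verified by explicit computation in Claims \ref{claim_meas} and \ref{claim_inv} — the invariance in particular needs a short direct argument (a fixed translation is absorbed by the limsups in $\e$ and $\ell$), since for $\bbG=\bbR^d$ one cannot simply intersect over the uncountable orbit.
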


When the random measure has  higher finite density moments, one can deal  with a larger class of  functions. Indeed, by means \cite[Prop.~5.3]{TS} we can derive the following result
 where $\D$ is the fundamental cell defined in \eqref{birra}
(see Section \ref{WTS} for the proof):

\begin{Theorem}\label{prop_pizza} 
  Suppose Assumptions $1$ and  $2$ to be valid. In addition, assume that
  \rrr{$\bbE[ \mu_\o(\D)^\a]<+\infty$} for some $\a>1$.
  Fix a measurable function  $\vartheta :\bbR_+\to\bbR_+$ such that $\vartheta$ is   non-increasing,  is   convex on $[a,+\infty)$  for some $a>0$ and satisfies $\int _0^\infty r^{d-1} \vartheta(r)dr <+\infty$.
   Then 
   there exists a $\bbG$--invariant set $\rosso{\cA_\vartheta}\subset \O$ with $\cP(\rosso{\cA_\vartheta})=1$  such that, for all $\o\in \rosso{\cA_\vartheta}$,
   the integral $\int_{\bbR^d}| \varphi ( x)| d\mu^\e_\o(x)$ is finite and
\verde{\eqref{piazza_fiume} holds}   
%it holds
%\be\label{adagio1}
%\lim_{\e \da 0 } \int _{\bbR^d} \varphi(x)  \mu_\o^{\e} (dx) =m \int _{\bbR^d} \varphi(x) dx\,,
%\en
for all continuous functions  $\varphi :\bbR^d\to \bbR$  with 
$|\varphi(x)| \leq \vartheta(|x|)$.
\end{Theorem}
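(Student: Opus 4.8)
\emph{Proof strategy.} We split each admissible test function $\f$ into a compactly supported part, handled by the already established Theorem~\ref{teo_tartina}, and a tail part, controlled by \cite[Proposition~5.3]{TS} applied to the single random variable $F(\o):=\mu_\o(\D)$. The moment hypothesis gives precisely $F\in L^\a$ with $\a>1$, which is what makes \cite[Proposition~5.3]{TS} --- valid only in $L^p$ with $p>1$ --- applicable here; Theorem~\ref{erg_thm} would not be enough, since $\vartheta$ may decay slower than any polynomial.

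\emph{Step 1 (reduction to a tail estimate).} For $\ell\in\bbN$ fix a continuous cutoff $\chi_\ell$ with $\mathds{1}_{B_\ell}\le\chi_\ell\le\mathds{1}_{B_{2\ell}}$. For continuous $\f$ with $|\f|\le\vartheta(|\cdot|)$ and for every $\o,\e$ one has
\[
\Big|\int\f\,d\mu^\e_\o-m\!\int\f\,dx\Big|\le\Big|\int\f\chi_\ell\,d\mu^\e_\o-m\!\int\f\chi_\ell\,dx\Big|+\int_{\{|x|\ge\ell\}}\vartheta(|x|)\,d\mu^\e_\o(x)+m\!\int_{\{|x|\ge\ell\}}\vartheta(|x|)\,dx .
\]
Since $\f\chi_\ell\in C_c(\bbR^d)$ satisfies $|\f\chi_\ell(x)|\le C_\ell(1+|x|)^{-\b}$ for a suitable $\b>2d+2$, the first term on the right tends to $0$ as $\e\da0$ by Theorem~\ref{teo_tartina}; the last term tends to $0$ as $\ell\uparrow\infty$ because $\int_0^\infty r^{d-1}\vartheta(r)\,dr<+\infty$. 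Hence \eqref{piazza_fiume}, together with the finiteness of $\int|\f|\,d\mu^\e_\o\le\int\vartheta(|x|)\,d\mu^\e_\o(x)$ (its restriction to $B_\ell$ being finite since $\mu_\o$ is locally finite), will follow once we produce a $\bbG$--invariant set $\cA_\vartheta$ with $\cP(\cA_\vartheta)=1$ on which
\[
\lim_{\ell\uparrow\infty}\ \varlimsup_{\e\da0}\ \int_{\{|x|\ge\ell\}}\vartheta(|x|)\,d\mu^\e_\o(x)=0 .
\]

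\emph{Step 2 (the tail estimate via \cite[Prop.~5.3]{TS}).} Partition $\bbR^d$ into the cells $\t_g\D$, $g\in\bbZ^d$, and use Assumption~$2$ (together with \eqref{pollice}) in the form $\int_{\t_g\D}h\,d\mu_\o=\int_\D h(y+Vg)\,d\mu_{\theta_g\o}(y)$. After the change of variables $x=\e y$ this yields
\[
\int_{\{|x|\ge\ell\}}\vartheta(|x|)\,d\mu^\e_\o(x)\le\e^d\!\!\!\sum_{g:\,\t_g\D\cap\{|y|\ge\ell/\e\}\ne\emptyset}\!\!\!\int_\D\vartheta\big(\e|y+Vg|\big)\,d\mu_{\theta_g\o}(y).
\]
As $\D$ is bounded and $V$ invertible, $|y+Vg|$ for $y\in\D$ is comparable to $|g|$ up to additive constants; since $\vartheta$ is non-increasing, the right-hand side is bounded, for $\e$ small depending on $\ell$, by $\e^d\sum_g\psi_R(\e g)F(\theta_g\o)$, where $R\asymp\ell$, $F(\o):=\mu_\o(\D)$ and $\psi_R(x):=\vartheta(c|x|)\,\mathds{1}_{\{|x|\ge R\}}$ for a suitable $c>0$ (the finitely many terms with small $|g|$ contribute $O(\e^d)\to0$). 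Each $\psi_R$ is dominated by the single radial function $\psi(x):=\vartheta(c|x|)$, which is non-increasing, convex near infinity and has $\int_0^\infty r^{d-1}\psi(r)\,dr<+\infty$, hence lies in the class for which \cite[Proposition~5.3]{TS} holds; so do the $\psi_R$, $R\in\bbN$. Applying \cite[Proposition~5.3]{TS} to $F\in L^\a$ along this countable family, and intersecting with the full-measure set of Theorem~\ref{teo_tartina}, produces a $\bbG$--invariant $\cA_\vartheta$ with $\cP(\cA_\vartheta)=1$ on which $\e^d\sum_g\psi_R(\e g)F(\theta_g\o)\to c(\psi_R)\,\Phi(\o)$ as $\e\da0$ for every $R\in\bbN$, with $\Phi$ a $\cP$--a.s.\ finite random variable (in fact $\Phi\equiv m\,\ell(\D)$ by Assumption~$1$). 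Since $c(\psi_R)=\int_{\{|x|\ge R\}}\psi\to0$ as $R\to\infty$, this gives the display at the end of Step~1 and finishes the proof. When $\bbG=\bbR^d$ one may instead replace the cell sum by an analogous integral--geometric averaging identity over $g\in\bbR^d$ and apply \cite[Proposition~5.3]{TS} directly; when $\bbG=\bbZ^d$ one first lifts the $\bbZ^d$--action to an $\bbR^d$--action by the standard suspension on $\O\times[0,1)^d$, for which the $\bbR^d$--average reproduces the above $\bbZ^d$--sum up to boundary terms that vanish in the limit.

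\emph{Main difficulty.} The delicate part is Step~2: the bookkeeping reducing the genuinely $\mu_\o$--dependent tail to a \emph{scalar} weighted ergodic average of $F=\mu_\cdot(\D)$ (where local finiteness of $\mu_\o$ and $\bbE[\mu_\o(\D)^\a]<+\infty$ enter), the verification that the comparison weights $\psi_R$ belong to the admissible class of \cite[Proposition~5.3]{TS} --- this is exactly why $\vartheta$ is assumed non-increasing and convex at infinity --- and the passage from a possibly discrete group $\bbG$ to the $\bbR^d$--action required by that proposition. The residual points (a.s.\ finiteness of the averages for each fixed $\e$, obtained over dyadic $\e$ and extended by monotonicity of $\vartheta$; $\bbG$--invariance of $\cA_\vartheta$) are routine.
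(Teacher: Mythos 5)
Your proposal is correct in outline and reaches the key tail estimate (which is exactly Lemma \ref{cinguetto_bis} of the paper) by a genuinely different route. The paper does not discretize: it mollifies the measure, setting $f(\o):=\int_{\bbR^d}\g_\d(x)d\mu_\o(x)\in L^\a(\O)$, applies \cite[Proposition~5.3]{TS} to the single untruncated weight $\vartheta(|\cdot|)$, and then uses the convexity of $\vartheta$ on $[a,+\infty)$ via Jensen's inequality to bound $\int\vartheta(|y|/n)d\mu_\o(y)$ from above by $\int\vartheta(|x|/n)f(\theta_x\o)dx$ plus an $O(\d)$ error; the vanishing of the tail is then obtained by subtracting the compactly supported part, whose limit is exact by Claim \ref{sale78}. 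You instead discretize into the cells $\t_g\D$, use only the monotonicity of $\vartheta$ to dominate the tail by a lattice average $\e^d\sum_g\psi_R(\e g)F(\theta_g\o)$ with $F=\mu_\cdot(\D)$, and apply \cite[Proposition~5.3]{TS} directly to the truncated weights $\psi_R$. This makes the role of the moment hypothesis more transparent and avoids the mollification and the subtraction step, but it leaves two points under-justified. First, for $\bbG=\bbR^d$ the cited proposition concerns integral averages $n^{-d}\int_{\bbR^d}\psi(x/n)f(T^x\o)dx$, so your cell sum must be converted into such an integral; this is doable (for $x\in\t_g\D$ one has $\mu_\o(\t_g\D)\le\mu_{\theta_x\o}(\D-\D)$, and $\mu_\cdot(\D-\D)\in L^\a$ by stationarity and Minkowski), but it needs to be written out. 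Second, and more importantly, your argument never uses the convexity of $\vartheta$ except through the unverified assertion that the weights $\psi_R=\vartheta(c|\cdot|)\mathds{1}_{\{|\cdot|\ge R\}}$ lie in the admissible class of \cite[Proposition~5.3]{TS}; this is precisely the point that must be checked against the exact hypotheses of that proposition, and it is the step the paper sidesteps by applying \cite{TS} only to the untruncated, radial, non-increasing weight while putting the convexity to work in the Jensen comparison instead. The items you call routine (finiteness of $\int\vartheta(|x|)d\mu^\e_\o(x)$ for every fixed $\e$ via monotonicity in $\e$, measurability and $\bbG$-invariance of $\cA_\vartheta$, the suspension argument for $\bbG=\bbZ^d$) are indeed handled in the paper by Claims \ref{claim_fin}, \ref{claim_meas}, \ref{claim_inv} and Section \ref{WT2}, and your sketches of them match.
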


Trivially, if $\bbE[ \mu_\o(\D)^\a]<+\infty$  for some $\a>1$, then the intensity $m$ if finite. \rot{The assumptions in Theorem \ref{prop_pizza} on $\vartheta$ are the same assumptions required in \cite[Prop.~5.3]{TS} and we have kept  them in their original form. An important example for applications is given by $\vartheta (r)=(1+r)^{-\b}$ with $\b>d$}.
The proof of the above theorem relies on the following lemma (proved in Section \ref{WTS}), relevant also for  our  applications to stochastic  homogenization and hydrodynamics:
\begin{Lemma}\label{cinguetto_bis}
  Suppose Assumptions $1$ and  $2$ to be valid. In addition, assume that  $\rrr{\bbE[ \mu_\o(\D)^\a]<+\infty}$ for some $\a>1$. \rosso{Fix a function $\vartheta$ as in Theorem \ref{prop_pizza}}.
  Then
  there exists a $\bbG$--invariant set $\rosso{\cC_\vartheta}\subset \O$ with $\cP(\rosso{\cC_\vartheta})=1$
\rosso{and such that, for all $\o\in \rosso{\cC_\vartheta}$,  it holds  $\int_{\bbR^d} \vartheta (|x|)d \mu_\o^\e (x) <+\infty$  and \eqref{tordo} is verified.}
%\be\label{tordo_bis}
%\lim_{\ell \uparrow+ \infty}\, \varlimsup_{\e \da 0} \int_{\{|x|\geq \ell\}} \vartheta (|x|)d \mu_\o^\e (x) =0\,.
%\en
\end{Lemma}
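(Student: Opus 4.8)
The plan is to follow the scheme used for Lemma~\ref{cinguetto}, with \cite[Proposition~5.3]{TS} playing the role of Theorem~\ref{erg_thm}. The trade-off is structural: the hypothesis $\bbE[\mu_\o(\D)^\a]<+\infty$ with $\a>1$ supplies exactly the integrability ($f\in L^p$ with $p=\a>1$) required to invoke \cite[Proposition~5.3]{TS}, and in return that result applies to weights with the weak decay permitted here for $\vartheta$ (only $\int_0^\infty r^{d-1}\vartheta(r)\,dr<+\infty$, together with monotonicity and convexity on $[a,+\infty)$), which need not make $\vartheta$ a $d$-good function; thus one trades stronger integrability of the random measure for weaker decay of the test function.

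First I would reduce to the environment by duality. Put $f(\o):=\mu_\o(\D)\in L^\a$; by Assumption~$2$ and $\bbR^d=\bigsqcup_{g\in\bbZ^d}\t_g\D$ one has $\mu_\o(\t_g\D)=\mu_{\theta_g\o}(\D)=f(\theta_g\o)$ for all $g\in\bbZ^d$. Using \eqref{pioli} with $\e=1/n$, decomposing into cells and using that $\vartheta$ is non-increasing,
\[
\int_{\{|x|\ge\ell\}}\vartheta(|x|)\,d\mu^{1/n}_\o(x)=\frac1{n^d}\int_{\{|x|\ge\ell n\}}\vartheta\big(\tfrac{|x|}{n}\big)\,d\mu_\o(x)\ \le\ \frac1{n^d}\sum_{g\,:\,\t_g\D\cap\{|x|\ge\ell n\}\ne\varnothing}\vartheta\Big(\tfrac{\mathrm{dist}(0,\t_g\D)}{n}\Big)f(\theta_g\o).
\]
Since $V$ is invertible, $\mathrm{dist}(0,\t_g\D)\ge c_V|g|-\mathrm{diam}(\D)$ with $c_V:=\min_{|u|=1}|Vu|>0$, hence $\mathrm{dist}(0,\t_g\D)\ge\tfrac12 c_V|g|$ once $|g|\ge g_0$, while the finitely many cells with $|g|<g_0$ contribute at most $\vartheta(0)\,n^{-d}\sum_{|g|<g_0}f(\theta_g\o)\to0$ a.s. For the remaining terms $\vartheta(\mathrm{dist}(0,\t_g\D)/n)\le\Psi(g/n)$ with $\Psi(y):=\vartheta(\tfrac12 c_V|y|)$, and $\t_g\D\cap\{|x|\ge\ell n\}\ne\varnothing$ forces $|g|\ge\ell n-C$. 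The point is that $\Psi$ inherits from $\vartheta$ boundedness, radial monotonicity, convexity of its profile at infinity, and $\int_{\bbR^d}\Psi(y)\,dy=\mathrm{const}\cdot\int_0^\infty r^{d-1}\vartheta(r)\,dr<+\infty$; thus $\Psi$ and its truncations $\Psi\,\mathds{1}_{\{|y|\le\ell\}}$ lie in the class of weights to which \cite[Proposition~5.3]{TS} applies. The case of a general $\e\in(0,1]$ is reduced to $\e\in\{1/n\}$ by sandwiching $\e$ between $1/(n+1)$ and $1/n$ and using the monotonicity of $\vartheta$, at the cost of a bounded multiplicative constant.

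Then I would apply \cite[Proposition~5.3]{TS} (in its discrete form when $\bbG=\bbZ^d$, and for $\bbG=\bbR^d$ via the same comparison as in the proof of Lemma~\ref{cinguetto}) to $f\in L^\a$ with the weights $\Psi$ and $\Psi\,\mathds{1}_{\{|y|\le\ell\}}$, and subtract, obtaining that $\cP$-a.s.
\[
\frac1{n^d}\sum_{g\,:\,|g|\ge\ell n}\Psi(g/n)\,f(\theta_g\o)\ \longrightarrow\ \bbE[f]\int_{\{|y|\ge\ell\}}\Psi(y)\,dy\qquad(n\to\infty),
\]
where ergodicity (Assumption~$1$) identifies the limiting constant as $\bbE[f]=m\,\ell(\D)$. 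Combined with the domination above this yields, on a set of full $\cP$-measure, $\varlimsup_{\e\da0}\int_{\{|x|\ge\ell\}}\vartheta(|x|)\,d\mu^\e_\o(x)\le C\,\bbE[f]\int_{\{|y|\ge c\ell\}}\Psi(y)\,dy$, and letting $\ell\uparrow+\infty$ the right-hand side vanishes by dominated convergence since $\Psi\in L^1(\bbR^d)$. The a.s.\ finiteness of $\int_{\bbR^d}\vartheta(|x|)\,d\mu^\e_\o(x)$ follows from the same domination and from $\bbE\big[n^{-d}\sum_g\Psi(g/n)f(\theta_g\o)\big]=\bbE[f]\,n^{-d}\sum_g\Psi(g/n)<+\infty$. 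Intersecting the countably many exceptional sets (one per $\ell\in\bbN$, one per $n\ge1$) and passing to the $\bbG$-invariant core produces the desired set $\rosso{\cC_\vartheta}$, of full measure by stationarity and ergodicity.

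The genuine obstacle I foresee is the construction of the comparison weight $\Psi$ and the verification that it belongs to the admissible class of \cite[Proposition~5.3]{TS}: this is precisely where the hypotheses on $\vartheta$ (non-increasing, convex on $[a,+\infty)$, $\int_0^\infty r^{d-1}\vartheta(r)\,dr<+\infty$) are used, and it is the only place the argument genuinely departs from the polynomial-decay situation of Lemma~\ref{cinguetto}. The remaining points --- the $\e$-to-$1/n$ interpolation, the control of the boundary cells near $\{|x|=\ell n\}$, and the assembly of a single $\bbG$-invariant null set --- are routine.
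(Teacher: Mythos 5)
Your cell decomposition and the domination of $\int_{\{|x|\ge\ell\}}\vartheta(|x|)\,d\mu^{1/n}_\o(x)$ by a discrete sum $n^{-d}\sum_g\Psi(g/n)f(\theta_g\o)$ with $f(\o)=\mu_\o(\D)$ are sound (they mirror \eqref{campane}--\eqref{campane100} in the proof of Lemma \ref{cinguetto}), but the argument then rests on an ergodic theorem that is not available: a pointwise theorem for \emph{discrete} weighted sums $n^{-d}\sum_{g\in\bbZ^d}\Psi(g/n)f(\theta_g\o)$ with $f\in L^\a$ and weights $\Psi$ that are merely integrable (decay exponent only $>d$). Theorem \ref{erg_thm} of the paper covers discrete sums only for $d$-good weights (decay $>2d+2$) --- which is precisely why Lemma \ref{cinguetto} cannot be upgraded for free --- while \cite[Proposition~5.3]{TS}, as used here, is a statement about the integral average $n^{-d}\int_{\bbR^d}\psi(x/n)f(\theta_x\o)\,dx$ under an $\bbR^d$-action. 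Your appeal to ``its discrete form'' and to ``the same comparison as in the proof of Lemma \ref{cinguetto}'' is exactly where the gap sits: the proof of Lemma \ref{cinguetto} applies Theorem \ref{erg_thm}, not any discrete/continuous comparison, and no discrete weighted theorem with decay $\b>d$ is established anywhere. (A secondary point: even granting such a theorem, restricting an ergodic $\bbR^d$-action to $\bbZ^d$ need not preserve ergodicity, so the limit constant would be $\bbE[f|\cI](\o)$ rather than $\bbE[f]$; harmless here since the tail integral of $\Psi$ vanishes as $\ell\uparrow+\infty$, but it should be stated.)

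The paper's proof takes a different route that stays entirely in the continuous world. It sets $f(\o):=\int_{\bbR^d}\g_\d(x)\,d\mu_\o(x)$ for a mollifier $\g_\d$, so that $f\in L^\a(\O)$ by the moment hypothesis, applies \cite[Proposition~5.3]{TS} to the integral average $n^{-d}\int\vartheta(|x|/n)f(\theta_x\o)\,dx$, and then transfers the conclusion to $n^{-d}\int\vartheta(|x|/n)\,d\mu_\o(x)$ via Jensen's inequality: the convexity of $\vartheta$ on $[a,+\infty)$ yields $\int\g_\d(z)\,\vartheta(|y+z|/n)\,dz\ge\vartheta(|y|/n)$ for $|y|\ge(a+1)n$, with an error $\d\,\mu_\o\bigl(B((a+1)n)\bigr)$ controlled by Claim \ref{sale78}. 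That Jensen step is where the convexity hypothesis on $\vartheta$ is actually consumed; your proposal never uses convexity in any essential way, which is a symptom of the missing step. For $\bbG=\bbZ^d$ the paper does not invoke a discrete ergodic theorem at all: it first suspends the system to an $\bbR^d$-action (Section \ref{WT2}) and then runs the continuous argument. To repair your scheme you would either have to prove the discrete weighted theorem you need, or rebuild a continuous integral from your cell sums (e.g.\ by dominating $f(\theta_g\o)$ by $\mu_\o(\D_2+x)$ for $x\in\t_g\D$ and an enlarged cell $\D_2$), at which point you are essentially back to the paper's argument.
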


\rosso{By applying Theorem \ref{prop_pizza} and Lemma \ref{cinguetto_bis} to the \rot{countable} family of functions $\vartheta:\bbR_+\to\bbR_+$ of the form $\vartheta(r):=C(1+r)^{-\b}$ with rational $C,\b$ and $\b>d$, one gets the following immediate consequence:}
\begin{Corollary}\label{lollo_bronzo}\rosso{Suppose Assumptions $1$ and  $2$ to be valid. In addition, assume that  $\rrr{\bbE[ \mu_\o(\D)^\a]<+\infty}$ for some $\a>1$. Then both Theorem~\ref{teo_tartina} and \ciak{Lemma~\ref{cinguetto}} remain true if one substitutes the condition $\b>2d+2$ by the condition $\b>d$.}
  \end{Corollary}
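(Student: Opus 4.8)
The plan is to obtain both extensions at once from Theorem~\ref{prop_pizza} and Lemma~\ref{cinguetto_bis}, applied to a countable family of model weights, and then to remove the countability restriction by a monotone--domination argument, exactly as announced in the remark preceding the statement. First I would fix the family: for each pair of rationals $C>0$ and $\b>d$ set $\vartheta_{C,\b}(r):=C(1+r)^{-\b}$ on $\bbR_+$. Each $\vartheta_{C,\b}$ is non-increasing, is convex on all of $[0,+\infty)$ (its second derivative $C\b(\b+1)(1+r)^{-\b-2}$ is positive), and satisfies $\int_0^\infty r^{d-1}\vartheta_{C,\b}(r)\,dr<+\infty$ precisely because $\b>d$, so it meets the hypotheses of both Theorem~\ref{prop_pizza} and Lemma~\ref{cinguetto_bis}. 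Let $\cA_{\vartheta_{C,\b}}$ and $\cC_{\vartheta_{C,\b}}$ be the $\bbG$--invariant sets of full $\cP$--measure produced by those two results for the weight $\vartheta_{C,\b}$, and set
\[
\cA_*:=\bigcap_{\substack{C,\b\in\bbQ\\ C>0,\ \b>d}}\bigl(\cA_{\vartheta_{C,\b}}\cap\cC_{\vartheta_{C,\b}}\bigr).
\]
Being a countable intersection of $\bbG$--invariant sets of full $\cP$--measure, $\cA_*$ is $\bbG$--invariant with $\cP(\cA_*)=1$, and this will serve as the new exceptional set in both statements.

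Next I would pass from rational to arbitrary decay exponents by domination. Given $\b>d$, pick a rational $\b'\in(d,\b]$ (possible since $\bbQ$ is dense in $(d,\b]$); then $(1+r)^{-\b}\le(1+r)^{-\b'}$ for all $r\ge0$. Hence, if $\varphi$ is continuous with $|\varphi(x)|\le C(1+|x|)^{-\b}$, choosing a rational $C'\ge C$ gives $|\varphi(x)|\le\vartheta_{C',\b'}(|x|)$ for every $x$, and since $\cA_*\subseteq\cA_{\vartheta_{C',\b'}}$ Theorem~\ref{prop_pizza} already yields, for all $\o\in\cA_*$, that $\int_{\bbR^d}|\varphi(x)|\,d\mu^\e_\o(x)<+\infty$ and that \eqref{piazza_fiume} holds; this is the asserted version of Theorem~\ref{teo_tartina} with $\b>d$ (in particular it covers every $\varphi\in\cS(\bbR^d)$, since a Schwartz function satisfies $|\varphi(x)|\le C_N(1+|x|)^{-N}$ for some integer $N>d$). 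Likewise, for $\vartheta(r):=(1+r)^{-\b}$ with $\b>d$ one has $0\le\vartheta\le\vartheta_{1,\b'}$ pointwise; since $\cA_*\subseteq\cC_{\vartheta_{1,\b'}}$, Lemma~\ref{cinguetto_bis} gives, for all $\o\in\cA_*$, that $\int_{\bbR^d}\vartheta_{1,\b'}(|x|)\,d\mu^\e_\o(x)<+\infty$ together with \eqref{tordo} for $\vartheta_{1,\b'}$, and the monotonicity $\vartheta\le\vartheta_{1,\b'}$ transfers both the integrability and \eqref{tordo} to $\vartheta$, which is the asserted version of Lemma~\ref{cinguetto} with $\b>d$.

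I do not expect a genuine obstacle: all the analytic content is already contained in Theorem~\ref{prop_pizza} and Lemma~\ref{cinguetto_bis} (and in \cite[Prop.~5.3]{TS} behind them). The only points demanding routine attention are verifying that $\vartheta_{C,\b}$ satisfies the convexity and integrability hypotheses for every rational $\b>d$, checking that $\bbG$--invariance and full $\cP$--measure are preserved under a countable intersection, and the elementary density step that replaces an arbitrary exponent $\b>d$ by a rational $\b'\in(d,\b]$ (together with an enlargement of the constant to a rational $C'\ge C$) so as to fall back on the countable family.
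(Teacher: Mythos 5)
Your proof is correct and follows essentially the same route as the paper, which derives the corollary precisely by applying Theorem \ref{prop_pizza} and Lemma \ref{cinguetto_bis} to the countable family $\vartheta(r)=C(1+r)^{-\b}$ with rational $C,\b$ and $\b>d$, intersecting the resulting full-measure invariant sets, and dominating a general exponent by a rational one. Your write-up simply makes explicit the routine verifications (convexity, integrability, countable intersection, density of rationals) that the paper leaves to the reader.
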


%\begin{Remark}\label{pediculosi}
%For later use we point out that, as the reader can easily check from the proofs,  \rosso{Theorems \ref{teo_tartina} and \ref{prop_pizza}, Lemmas \ref{cinguetto} and \ref{cinguetto_bis} and Corollary \ref{pediculosi} remain} true if in Assumption $2$ we just require that $\mu_{\theta_g\o}= \t_g \mu_\o$  for all $g\in \bbG$ and all $\o$ varying in a $\bbG$--invariant set $\O_*\in \cF$ with $\cP(\O_*)=1$. We call this version Assumption $2^*$ for later use.
%\end{Remark}

For later use we point out that, as the reader can easily check from the proofs,  Theorems \ref{teo_tartina} and \ref{prop_pizza}, Lemmas \ref{cinguetto} and \ref{cinguetto_bis} and \rot{Corollary \ref{lollo_bronzo}} remain true if  Assumption $2$ is replaced by the following one:

\bigskip

\noindent
\rot{\bf Assumption $\mathbf{2^*}$}: \emph{It holds  $\mu_{\theta_g\o}= \t_g \mu_\o$  for all $g\in \bbG$ and all $\o$ varying in a $\bbG$--invariant set $\O_*\in \cF$ with $\cP(\O_*)=1$. }

\smallskip

%\grun{era remark ma voglio centrare Assumption $2^*$}

\subsection{\rot{Examples}} In this section we provide some examples of random measures to which one can apply the above results. The class is very large and we just give some illustrative examples.

\subsubsection{Random measures associated to the Bernoulli bond percolation on $\bbZ^d$}
Consider the Bernoulli bond percolation on $\bbZ^d$. Denoting by $\bbE_d$ the set of undirected edges of the lattice $\bbZ^d$, the probability space is given by 
 $\O:=\{0,1\}^{\bbE_d}$ endowed with the product topology ($\{0,1\}$ has the discrete topology), $\cF$ is the $\s$--algebra of Borel sets and $\bbP$ is the Bernoulli product measure on $\O$ of parameter $p\in [0,1]$.
 Below we write $\o_{x,y}$ instead of $\o_{\{x,y\}}$ for $\{x,y\}\in \bbE_d$. 
 The group $\bbG:=\bbZ^d$ acts on $\O$ by the maps $\theta _g :\O\to \O$ with $g\in \bbG$, where   $(\theta_g \o)_{x,y}:= \o_{x+g,y+g}$ for all $\{x,y\} \in \bbE_d$.
 Note that Assumption $1$ is satisfied by $\bbP$.
 
 We consider the  translations $(\t_g)_{g\in \bbG}$ given by \eqref{trasferta1} with $V=\mathbb{I}$, i.e.~$\t_g x=x+g$ for all $x\in \bbR^d$ and $g\in \bbG$. Then the parallelepiped in \eqref{birra} is given by $\D=[0,1)^d$.
 
 An example of random measure satisfying Assumption  $2$ is $\mu_\o:= \sum _{x\in V(\o)} \d_x$, where $V(\o)$ is the vertex set of the graph obtained by keeping only the open edges, i.e. $V(\o):=\{ x\in \bbZ^d\,:\, \o_{x,y}=1 \text{ for some $y$  with $|x-y|=1$}\}$.

If $p$ is supercritical, then it is known that  there exists a $\bbG$-invariant set $ \O_1\subset \O$ such that the graph obtained by keeping only the open edges has a unique infinite connected component, and we call $\cC(\o)$ the vertex set of this component for $\o\in \O_1$. Then another  example of random measure satisfying Assumption  $2$ is given by  \[
\mu_\o:=
\begin{cases}  \sum _{x\in \cC(\o)} \d_x  & \text{ if } \o \in \O_1\,,\\
\emptyset &\text{ if } \o \not \in \O_1\,.
\end{cases}
\]
For both the above random measures  $\mu_\o(\D)\in \{0,1\}$, hence all moments of $\mu_\o(\D)$ are finite.  The first moment, i.e. the intensity $m$, if then given by $\bbP(0\in \cV(\o))$ and $\bbP(0\in \cC(\o))$ in the first and the second case, respectively.

\subsubsection{Contrast structures}\label{miele}
Let 
$\O:=\bbR_+^{\bbZ^2}$ endowed with the $\s$--algebra  $\cF$ of Borel subsets. Let $\bbP$ be  a probability measure on $\O$ satisfying Assumption $1$ where $(\theta_g \o)_z:= \o _{z+g}$ for any $g,z \in \bbZ^2$ (for example, $\bbP$ can be a  product probability measure  with equal marginals).

\begin{figure}
  \includegraphics[scale=0.17]{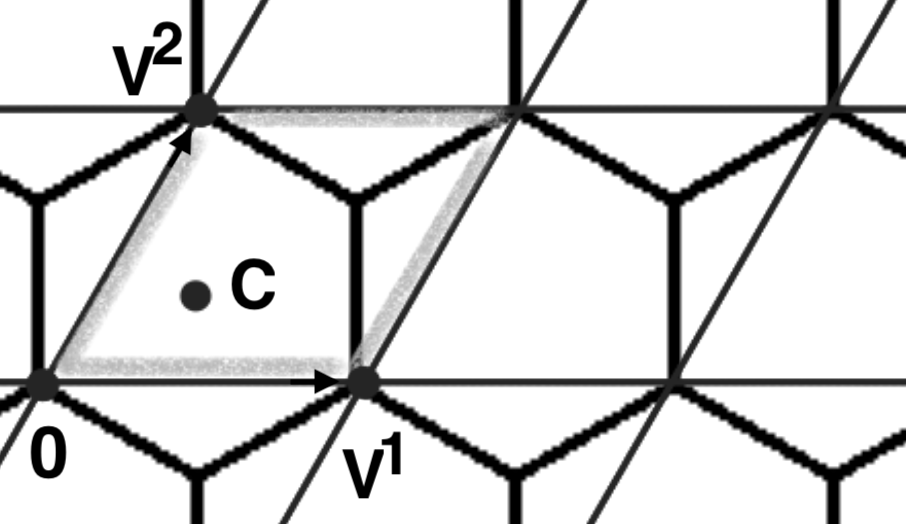} 
\caption{Hexagonal lattice, vectors $v^1$ and $v^2$, hexagon center $c$. The fundamental cell $\D$ is the parallelepiped with vertexes $0,v^1,v^2,v^1+v^2$ (apart boundary terms). }
\label{fig1}
\end{figure}

Consider the hexagonal lattice in $\bbR^2$. Let $v^1,v^2$ be the basis vectors and 
  $c$ be the center of the hexagon containing the origin as in Figure \ref{fig1}. Take $\D$ as in \eqref{birra}, set $V:=[v^1|v^2]$  and let $\t_g x:= x+ V g= x+ g_1 v^1 +g_2 v^2$ for all $g=(g_1,g_2)\in \bbZ^2$ and $x\in \bbR^2$. Note that the hexagonal lattice is left invariant by  the translations $(\t_g)_{g\in \bbZ^2}$. Moreover,  note that the map $g \mapsto  \t_g c$ is a bijection between $\bbZ^2$ and the set of hexagon centers. In what follows, given $g\in \bbZ^2$, we define $H_g$ as the hexagon centered at $\t_g c$. 

We set $\mu_\o (dx):=f_\o(x) dx$ where $f_\o(x):=\o_g$ for $x\in H_g$.
 Let us  check that the random measure $\mu_\o$  satisfies Assumption $2$. Since the density function $f_\o(x)$ is constant on the lattice hexagons, it is enough to check that $\mu_{\theta_g\o} (H_z)= \t_g\mu_\o (H_z)$ for all $\o\in \O$, $g,z\in \bbZ^2$.
We first observe that, since $H_z$ has center $\t_z c$, then $\t_g H_z$ has center $\t_g (\t_z c)=\t_{g+z} c$. This implies that $\t_g H_z= H_{g+z}$.  
  By denoting $A$ the area of a hexagon we then have 
 \[
 \t_g \mu_\o (H_z)=\mu_\o (\t_g H_z) =\mu_\o(H_{g+z}) = A \o _{g+z} = A(\theta _g \o)_z= \mu_{\theta_g \o}(H_z)\,. 
  \]
Hence, we have checked Assumption  $2$.

  Since  $\mu_\o(\D)= A ( \frac{4}{6}\o _0+\frac{1}{6} \o_{-v_2}+\frac{1}{6} \o_{-v_1})$,   the intensity $m$ is finite if and only if $\int d\bbP(\o) \o_0<+\infty$ and in general  $\bbE[\mu_\o (\D)^\a]<+\infty $ if and only if $\int d \bbP(\o) \o_0^\a<+\infty$.

    Suppose for example that $\o_0$ under $\bbP$ is a continuous random variable with probability density $\g \mathds{1}_{[e,+\infty)} (x)  (x \ln x)^{-2}$, where $\g$ is the normalizing constant.
    Since $1/[x ( \ln x)^2]=- D(1/\ln x)$,  $m$ is finite but $\bbE[\mu_\o (\D)^\a]=+\infty $ for all $\a>	1$ (in particular, one can apply e.g. Theorem \ref{teo_tartina} but not Theorem \ref{prop_pizza}).
%We take $\bbG:=\bbZ^d$ and define $\theta _g 

\subsubsection{Random measures associated to  simple  point processes}
%We give a basic example with $\bbG=\bbR^d$ associated  to the Poisson point process. We point out that any other simple point process stationary and ergodic w.r.t. Euclidean translations (according to the definitions of \cite{DV}) would fit as well. 
Let  $\O$   be the space of locally finite subsets of $\bbR^d$. The injective map $\Phi: \O\ni \o \mapsto \sum_{x\in \o}\d_x \in \cM$ allows to identify $\O$ with a subset of $\cM$ (indeed  $\Phi(\O)$ is a Borel subset of $\cM$ as stated in \cite[Prop.~7.1.III]{DV}). As in \cite{DV} we endow $\O$ with the metric induced by  $d_\cM$ and the above injection $\Phi$, i.e. $d(\o,\o'):= d_{\cM} (\Phi(\o),\Phi(\o'))$. 
We define $\cF$ as  the $\s$--algebra of Borel subsets of $\O$ w.r.t. the above metric. It is known (see~\cite[Corollary~7.1.VI]{DV})  that $\cF$ is generated by the sets $\{\o\in \O\,:\, |\o \cap A|=n\}$ with $A$ Borel set of $\bbR^d$ and $n $ in $\bbN$. Then $\bbG=\bbR^d$ acts on $\O$ by the maps $\theta_g\o := \o-g$ (cf.~\cite[Chapter~10]{DV}).  

Let  $\bbP$ be a probability measure on $(\O,\cF)$ satisfying Assumption 1, i.e. $\bbP$ is the law of any simple point process stationary and ergodic w.r.t. Euclidean translations according to the definitions of \cite{DV}  (e.g.~$\bbP$ is the law of a homogeneous Poisson point process).

We take $V=\bbI$ in \eqref{trasferta1}, i.e. we set $\t_g x:=x+g$ for all $x,g\in \bbR^d=\bbG$. 
Given $\o\in \O$, we set $\mu_\o := \Phi(\o)=\sum _{x\in \o } \d_x$. Then it is simple to check that also Assumption 2 is satisfied. Indeed, since  $\t_g \mu_\o (A)= \mu_\o (\t_g A)=\mu_\o (A+g)$, we have 
$\t_g \mu_\o= \sum_i \d_{x_i-g}$ if $\mu_\o =\sum_i \d_{x_i}$. This implies that  $\t_g \mu_\o=\mu_{\theta_g \o}$.

%%%%%%%%%%%%%%%%%%%%%%%%%%%%%%%%%%%%%%%%%%%%%%%%%%%%%%%%%%%%%%%%%%%%%%%%%%%%%%%%%%%%%%%%%%%%%%%%%%%%%%%%%%%%%%%%%%%%%%%%%%%%%%%%%%%%%%%%%%%%%%%%%%%%%%%%%%%%%%%%%%%%%%%%%%%%%%%%%%%%%%%%%%%%%%%%%%%%%%%%%%%%%%%%%%%%%%%%%%%%%%%%%%%%%%%%%%%%%%%%%%%%%%%%%%%%%%%%%%%%%%%%%%%%%%%%%%%%%%%%%%%%%%%%%%%%%%%%%%%
\section{Application to stochastic homogenization of long-range random walks on point processes and to hydrodynamics}\label{sec_rw}

 \rot{Differently from Section \ref{sec_erg_thm}, in this section $|x|$ will denote   the \rot{Euclidean} norm of $x$. We also recall that, given a topological space $W$,  $\cB(W)$ denotes its Borel $\s$--algebra. }

In this section we explain how to extend \cite[Theorem~4.4]{Fhom1} by removing  the restrictive Assumption (A9) present there. In \rot{order} to state our final result \rot{and give a self-contained presentation (accessible also without reading \cite{Fhom1}), we  recall} some results of \cite{Fhom1}.
We consider  the same setting of the previous section, \rot{but we restrict to purely atomic measures as described below.}
Let us recap our \ciak{setting.}

% We assume here  that the measure  $\mu_\o$ is   purely atomic (i.e. pure point)  with locally finite support  for any $\o\in \O$. In particular, we have 
%\be\label{cibo}
%\mu_\o = \sum _{x\in \hat \o} n_x(\o) \d_{x}\,, \;\;\; \; n_x(\o) :=\mu_\o(\{x\})\,,\;\;\;\;\hat \o:=\{x \in \bbR^d\,:\, n_x(\o) >0\}
%\en
% and $\hat \o$ is  a locally finite  set. The map $\o\mapsto \hat \o$ then defines a simple point process \rot{according to \cite{DV}}. %\cite[Prop.~9.1.III]{DV2}}.
 
 \smallskip
 
 \rot{We have a probability space $(\O,\cF,\bbP)$. When modeling a disordered medium,  elements of $\O$ are usually called \emph{environments} and encode all the local randomness of the medium. The group $\bbG=\bbR^d$ or $\bbG=\bbZ^d$ acts on the probability space by the action $(\theta_g)_{g\in \bbG}$ and $\bbP$ is supposed to be $\bbG$-invariant and ergodic for this action (Assumption  $1$ of Section \ref{appl_random_meas}). We consider the translations $(\t_g)_{g\in \bbG}$ on $\bbR^d$ given in \eqref{trasferta1} where $V$ is an invertible matrix.  
We also  assume  to have a random measure $\mu_\o$ which   is   purely atomic (i.e.~pure point)  with locally finite support  for any $\o\in \O$. In particular, we have 
\be\label{cibo}
\mu_\o = \sum _{x\in \hat \o} n_x(\o) \d_{x}\,, \;\;\; \; n_x(\o) :=\mu_\o(\{x\})\,,\;\;\;\;\hat \o:=\{x \in \bbR^d\,:\, n_x(\o) >0\}
\en
 and $\hat \o$ is  a locally finite  set (we note that the map $\o\mapsto \hat \o$ then defines a simple point process \rot{according to \cite{DV})}. %\cite[Prop.~9.1.III]{DV2}}.
Finally, we assume that   $\mu_{\theta_g\o}= \t_g \mu_\o$  for all $g\in \bbG$ and all $\o$ varying in a suitable  $\bbG$--invariant set $\O_*\in \cF$ with $\cP(\O_*)=1$  (Assumption $2^*$ of Section \ref{appl_random_meas}).}

\smallskip

We \rot{enrich the above setting by  assuming} to have a measurable function 
\be r: \O\times \bbR^d \times \bbR^d \ni (\o, x, y)\mapsto r_{x,y}(\o) \in [0,+\infty)\,.\en 
As it will be clear below, only the value of $ r_{x,y}(\o)$ with $x\not =y$ in  $ \hat \o$ will be relevant.  Hence, 
without loss of generality, we take  
\be \label{lino}
r_{x,x}(\o)\equiv 0\,, \qquad  r_{x,y}(\o)\equiv 0 \qquad \forall  \{x,y\}\not \subset \hat \o\,.
\en
Below $r_{x,y}(\o)$, with $x,y\in \hat \o $, will \rosso{be the   jump rates of a continuous time random walk in the environment $\o$ with state space $\hat\o$}. Before introducing this random walk we fix some notation and assumptions.

\medskip
\rot{We recall that, roughly, the intensity $m$ is the mean density of points in $\hat \o$ (cf.~Definition \ref{def_m}).} 
Since we need to deal with the Palm distribution, we need that the intensity $m$ is \rosso{finite and non zero} (if $m$ was zero,  $\mu_\o$ would be the zero measure $\bbP$--a.s.).  Hence we introduce the following:
\smallskip

\noindent
{\bf Assumption $\mathbf{3}$}: \emph{The intensity $m$ is finite and positive.}

\smallskip

We call $\cP_0$ the Palm distribution associated to $\cP$ and the random measure $\mu_\o$.  \rot{Below} we recall  the definition of $\bbP_0$ \rot{(the interested reader can see  \cite[Section~2.3]{Fhom1} and references therein for a detailed exposition and proofs, although not necessary below)}. 

For  $\bbG=\bbR^d$ and $V=\bbI$
the Palm distribution $\cP_0$ is the probability measure on $(\O,\cF)$ such that, for any $U\in \cB(\bbR^d)$  with \rot{Lebesgue measure $\ell (U)\in(0,+\infty)$}, 
\be\label{palm_classica}
\cP_0(A):=\frac{1}{m  \ell(U) }\int _\O d\cP(\o) \int _{U} d\mu_\o (x)  \mathds{1}_A(\theta_x \o)\,, \qquad \forall A\in \cF
\,.\en
\ciak{The probability measure} $\cP_0$ has support inside the set $ \O_0:=\{\o\in \O\,:\, n_0(\o)>0\}$. We refer to \cite[Section~2.3]{Fhom1} for the case $\bbG=\bbR^d$ and $V$ generic \rot{(this case is not common, hence we do not detail it here)}.

For  $\bbG=\bbZ^d$,  $V=\bbI$, and $\hat \o\subset \bbZ^d$ for all $\o\in \O$ (this case will be  called \emph{special discrete case}),  the Palm distribution $\cP_0$ can be identified with   the  probability measure 
concentrated on the set $\O_0:=\{ \o \in \O:n_0(\o)>0\}$ such that
\be\label{palmeto}
 \cP_0(A):= \bbE\left[ n_0\,   \mathds{1}_A\right] / \bbE[n_0] \qquad \forall A\in \cF\,.
 \en
 In this particular case the intensity of the random measure is given by 
$ m= \bbE[n_0]$.

In general, for  $\bbG=\bbZ^d$,
the  Palm distribution $\cP_0$  is the probability measure on $\left(\O\times \D,\cF\otimes \cB(\D)\right)$ \rot{(where $\D$ is the parallelepiped \eqref{birra})} such that
\be\label{Palm_Z}
\cP_0(A):=\frac{1}{m\, \ell(\D)}\int _\O d\cP(\o) \int _{\D} d\mu_\o (x)  \mathds{1}_A(  \o,x )\,, \qquad \forall A\in \cF\otimes \cB(\D)
\,.\en
\ciak{The probability measure} $\cP_0$ has support inside $\O_0:=\{(\o, x)\in \O\times \D\,:\,n_x(\o)>0\}$. 
 \rot{If, in addition, $V=\bbI$ and $\hat \o \subset \bbZ^d$, then $\D=[0,1)^d$ and   $\O_0:=\{(\o, 0)\,: \o \in \O\,, \,n_0(\o)>0\}$. Hence, in the special discrete case,  by applying the natural bijection $(\o,0)\mapsto \o$ between  $\O_0$ and  $\{\o \in \O\,:\, n_0(\o)>0\}$, $\bbP_0$ defined in \eqref{Palm_Z} becomes \eqref{palmeto}. }

% In \cite{Fhom1} we explain why, in the special discrete case, the above definition reduces (modulo some bijections) to  \eqref{palmeto}.

We define the function 
$\l_k:\O_0 \to [0,+\infty]$  (for $k\in[0,\infty)$) as follows:\\
 \begin{equation}\label{altino15}
 \begin{split}
 & 
 \begin{cases}
  \l_k(\o):=\sum _{x\in \hat \o}  r_{0,x}(\o)|x|^k\\
% \vivo{F_\d(\o):=   \int d \hat \o (y)  r_{0,y} (\o) \bigl[\int d\hat \o (z) r_{y,z}(\o)\bigr]^\d}\\
 \O_0=\{\o\in \O\,:\, n_0(\o)>0\}
 \end{cases}  \qquad
\Large{\substack{ \text{\;\;\;\;Case $\bbG=\bbR^d$ and}\\\text{\;\;\;\;\;\;\;\;special discrete case}}\,,}
\\
& \begin{cases}
  \l_k(\o,a):=
\sum_{x\in \hat \o}  r_{a,x}(\o)|x-a|^k   \\
 % \vivo{F_\d(\o,a):=   \int d \hat \o  (y)  r_{a,y} (\o) \bigl[\int d \hat \o (z) r_{y,z}(\o)\bigr]^\d}\\
   \O_0:=\{(\o, x)\in \O\times \D\,:\,n_x(\o)>0\}
\end{cases}
 \text{\;Case $\bbG=\bbZ^d$\,.}
\end{split}
\end{equation}

%In the special case discussed at the end of Section \ref{subsec_palm}, i.e. $\bbG=\bbZ^d$, $(v_1,\dots, v_d)=(e_1, \dots, e_d)$ and $\hat \o \subset \bbZ^d$ for $\o\in \O$, when thinking of $\cP_0$ with support on $\O_0$ defined in \eqref{casasp}, we take $\l_k$ and $F_*$ as  defined  in  \eqref{kimo1} and  \eqref{rumi1}, respectively.
%

\medskip

\smallskip
\noindent
{\bf Assumption $\mathbf{4}$}. \emph{We assume that for some $\bbG$--invariant set  $\O_*\in \cF$  with $\cP(\O_*)=1$ the following conditions are fulfilled:
\begin{itemize}
\item[(i)]   for all $\o \in \O_*$  and  $g\not =g' $ in $ \bbG $, it holds 
  $ \theta_g\o\not = \theta _{g'} \o$;
\item[(ii)]   for all $\o \in \O_*$, 
  $g\in \bbG$ and  $x,y \in \bbR^d$, it holds  
$ r_{x,y} (\theta_g\o)= r_{\t_g x, \t_g y} (\o) $;
\item[(iii)]   for all $\o \in \O_*$  and    $x,y\in  {\hat \o}$,  it holds 
$n_x(\o)  r_{x,y}(\o) = n_y(\o) r_{y,x}(\o)$;
\item[(iv)]   for all $\o \in \O_*$ and  $x\not=y$ in  $  \hat \o$, there exists a  path $x=x_0$, $x_1$,$ \dots, x_{n-1}, x_n =y$ such that $x_i \in \hat \o$ and  $r_{x_i, x_{i+1}}(\o) >0$ for all $i=0,1, \dots, n-1$;
\item[(v)]   $ \l_0, \l_2 \in L^1(\cP_0)$;
  \item[(vi)]  $L^2(\cP_0)$ is separable.
\end{itemize}
}
\rot{Trivially, at cost to take the intersection, the sets $\O_*$ appearing in Assumption $2^*$ and $4$ can be considered  the same}.

%grun{Recall Remark \ref{pediculosi}.
\rot{We point out that Assumptions $1$, $2^*$, $3$, $ 4$  correspond to Assumptions (A1),\dots,(A8) in \cite[Section~2]{Fhom1} (indeed  (A1) is Assumption $1$, (A2) is  Assumption $3$, (A3) is Item (i) of Assumption $4$, (A4) is  Assumption $2$ plus Item (ii) in Assumption $4$, while (A5), (A6), (A7) and (A8) correspond respectively to Items (iii), (iv), (v) and (vi) of Assumption $4$). They are satisfied in plenty of models as discussed below. For  comments on the above assumptions we refer the interested reader to  \cite[Section 2.2]{Fhom1} and \cite{F_resistor}.}

\subsection{The random walk $(X^\o_t)_{t\geq 0}$ and stochastic homogenization}
Given $\o \in \O$, we consider the  continuous-time random walk $(X^\o_t)_{t\geq 0}$ with state space $\hat \o$ and jumping from $x$ to \rosso{$y\not =x $} with probability rate $r_{x,y}(\o)$.  In particular, once arrived at $x\in \hat\o$ the random walk waits there an exponential time with parameter $r_x(\o):=\sum_{y\in \hat \o} r_{x,y}(\o)$, afterwards it jumps to another site $y$ in $\hat\o$ chosen with probability $r_{x,y}(\o)/r_x(\o)$.
Due to \cite[Lemma~3.5]{Fhom1}, under general assumptions which are implied by our Assumptions $1$, $2^*$, $3$, $4$,  \rot{for all $\o$ varying in a $\bbG$--invariant set with $\bbP$--probability one  the above parameters $r_x(\o)$ are finite and positive for all $x\in \hat \o$, 
the random walk $(X^\o_t)_{t\geq 0}$ has a.s.  no explosion   (whatever the starting point) and therefore it  is well defined for all $t\geq 0$}.

We point out that  $(X^\o_t)_{t\geq 0}$ is a (possibly long-range) random walk on the simple point process $\hat \o$ with $\o$ sampled by $\bbP$. Our \rot{modeling} covers several examples \rot{(see Figure \ref{grafene}). In \cite[Section~5]{Fhom1} the reader can find the discussion (also about the validity of Assumptions $1$, $2^*$, $3$, $4$) of the following models: nearest-neighbor random conductance model on $\bbZ^d$, random conductance model on $\bbZ^d$ with long conductances, random walk with random conductances on infinite clusters, Mott random walk (whose underlying graph is the complete graph on $\hat\o$), 
simple random walk on the $d$-dimensional  Delaunay triangulation \rot{(i.e.~the graph dual to the Voronoi tessellation)} on a Poisson point process, 
nearest--neighbor random conductance models on lattices. The reader can find the discussion of the validity of Assumptions $1$, $2^*$, $3$, $4$  also in \cite[Section~3.4]{F_resistor} for stochastic lattices and periodic models, and in \cite[Section~5]{F_SEP} for random conductance models on crystal lattices and  for random walks on marked simple point precesses.
 Finally other examples will be provided in \cite{F_in_prep} and \cite{FT_in_prep}.}

%
%, as random walks on  supercritical percolation clusters in  lattices or in  continuum percolation models (as the Boolean model and the random connection model), random walks on crystal lattices, on  Delaunay triangulations,  on stationary stochastic lattices, Mott's random walk and so on (see \cite[Section~5]{Fhom1},  \cite[Section~3.4]{F_resistor}, \cite{F_in_prep} and Figure \ref{grafene}). \club

\begin{figure}
  \includegraphics[scale=0.30]{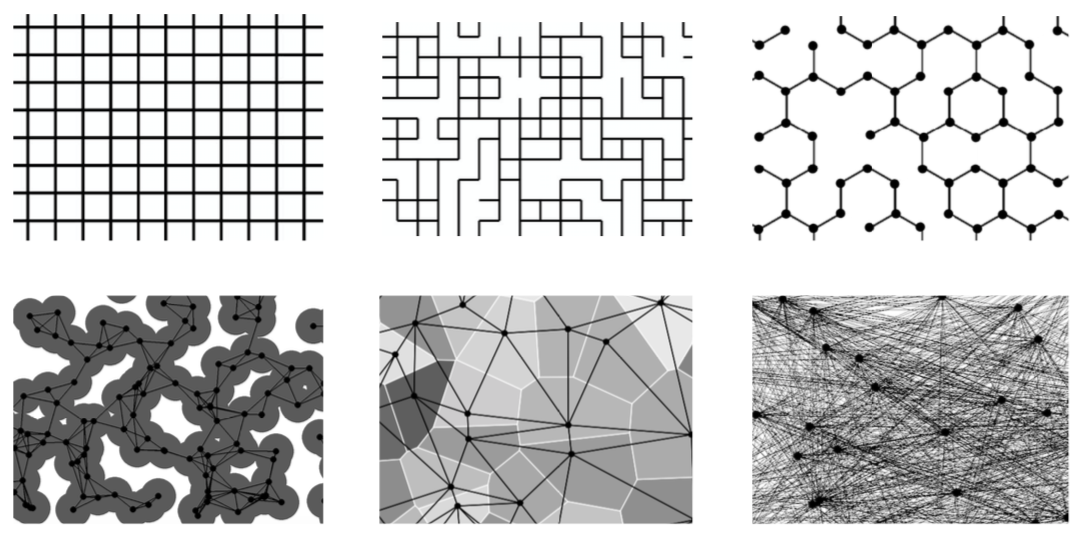} 
\caption{Some  random  graphs with vertex set $\hat \o$ underlying the random walk $(X^\o_t)_{t\geq 0}$: lattice $\bbZ^d$; the supercritical  percolation cluster on $\bbZ^d$, on the hexagonal lattice and  in the  Boolean model; the graph dual to the Voronoi tessellation; a  complete graph.}
\label{grafene}
\end{figure}

\medskip

Let $D$ be the \emph{effective homogenized matrix}.
%We recall its definition in the most important case (for the remaning case $\bbG=\bbR^d$ and $V=\bbI$ see \cite[Section~3]{Fhom1}).
$D$ is implicitly defined  as solution of a variational problem, moreover $D$ is  a symmetric $d\times d$  positive semidefinite matrix.  When $V=\mathbb{I}$, for   $\bbG=\bbR^d$ or in  the special discrete case, it holds 
 %\begin{equation}\label{def_D_R}
\[
 a \cdot Da =\inf _{ f\in L^\infty(\cP_0) } \frac{1}{2}\int _{\O_0} d\cP_0(\o)\int_{\bbR^d} d\hat \o (x) r_{0,x}(\o) \left
 (a\cdot x - \nabla f (\o, x) 
\right)^2\,,
 \]
 for any $a\in \bbR^d$,   where $\nabla f (\o, x) := f(\theta_{x} \o) - f(\o)$.
 \rosso{We refer to \cite[Definition 3.6]{Fhom1} for the general case.}
 \rrr{We point out that $D$ can be degenerate and non-zero (see~\cite[Appendix~A]{F_SEP} for an example). }

\medskip

Given $\e>0$ we write $(P^\e _{\o ,t} )_{t \geq 0}$ for the $L^2(\mu^\e _\o)$--Markov semigroup associated to the \rot{diffusively rescaled} random walk $(\e X^\o _{ \e^{-2} t} )_{t\geq 0}$ on $\e \hat \o$, where 
\[\rot{\mu^\e_\o := \e^d \sum _{x\in \hat \o} n_x(\o) \d_{\e x}}\]
 according to \eqref{scaletta} and \eqref{cibo}. Simply, given $f\in
L^2(\mu^\e _\o)$,   $P^\e _{\o ,t}f(\rot{\e x})$ is the expectation of $f ( \e X^\o _{ \e^{-2} t})$ when the \rot{diffusively rescaled}  random walk starts at $\e x$.
We denote  by $ \bbL^\e _\o$ the infinitesimal generator of the semigroup 
$(P^\e _{\o ,t} )_{t \geq 0}$, which is a self-adjoint operator in  $L^2(\mu^\e_\o)$ (see \cite{Fhom1} for more details on $ \bbL^\e _\o$, \rot{although not used below}). 
Given $\l>0$ we write  $R^\e _{\o ,\l}: L^2(\mu^\e _\o)\to L^2(\mu^\e _\o) $ for the $\l$--resolvent  associated to the random walk $\e X^\o _{ \e^{-2} t} $, i.e.  $R^\e _{\o ,\l}:= (\l -\bbL^\e _\o )^{-1} =\int_0^\infty e^{- \l s} P^\e _{\o ,s} ds $.

%Note that  $P^\e _{\o ,t}=e^{t \bbL^\e _\o }$, where $\bbL^\e _\o $ is the infinitesimal generator in $L^2(\mu^\e_\o)$ of the  semigroup $(P^\e _{\o ,t})_{t\geq 0}$ and is a self-adjoint operator (see \cite{Fhom1} for more details).

Similarly we write $( P_t  )_{t \geq 0} $ for the  Markov semigroup on $L^2( m dx)$   associated to the  (possibly degenerate)  Brownian motion on $\bbR^d$  with diffusion matrix $ 2 D $. We denote by $\nabla_*$ the weak gradient along the space ${\rm Ker}(D)^\perp$ (when $D$ is non-degenerate $\nabla_*$ reduces to the standard weak gradient).
%Then the infinitesimal generator of  $( P_t  )_{t \geq 0} $ is given by the self-adjoint  operator $\nabla_*\cdot D\nabla_*$ in $L^2(m dx)$ defined on a suitable dense subspace (see \club  \cite{Fhom1} for more details).
We write $ R_\l : L^2(m dx) \to L^2(m dx)$ for the $\l$--resolvent associated to the above Brownian motion on $\bbR^d$  with diffusion matrix $2 D$. \rot{We point out that, by Claim~\ref{sale78} in Section \ref{settimino},  for $\bbP$--a.a.~$\o$ the measure $m dx$ is the  limit   as $\e\da0$ of $\mu^\e_\o$  in the measure space $\cM$ (cf.~Section \ref{appl_random_meas}).}

\medskip

We recall a classical definition in stochastic homogenization:
\begin{Definition}\label{debole_forte} Fix $\o\in \O$ and a 
 family of $\e$--parametrized  functions $v_\e \in L^2( \mu^\e_\o)$. The family $\{v_\e\}$  \emph{converges weakly} to the function  $v\in L^2( m dx)$ (shortly, $v_\e \rightharpoonup v$) if $ \varlimsup  _{\e \da 0} \| v_\e\| _{L^2(\mu^\e_\o) } <+\infty$ and 
 %\[
   $\lim _{\e \da 0} \int_{\bbR^d} d  \mu^\e _\o (x)   v_\e (x) \varphi (x)= 
\int_{\bbR^d} dx\,m  v(x) \varphi(x) 
$
% \]
for all $\varphi \in C_c(\bbR^d)$. 
The  family $\{v_\e\}$  \emph{converges strongly} to  $v\in L^2( m dx)$ (shortly, $v_\e\to v$)  if  $ \varlimsup  _{\e \da 0} \| v_\e\| _{L^2(\mu^\e_\o) } <+\infty$ and 
\[
\lim _{\e \da 0} \int_{\bbR^d}  d  \mu^\e _\o (x)   v_\e (x) g_\e (x)= 
\int_{\bbR^d} dx\, m  v(x) g(x) 
\]
for any family of functions $g_\e \in  L^2(\mu^\e_\o)$ weakly converging to $g\in L^2( m dx) $.
\end{Definition}
\begin{Remark}\label{rinomato}
  It is well known, and also recalled in Claim \ref{sale78} in \rot{Section \ref{settimino}}, that   there exists a $\bbG$--invariant set $\cB\in \cF$ with $\cP(\cB)=1$ such that \eqref{piazza_fiume} is true for all functions in $C_c(\bbR^d)$. As a consequence, for $\o \in \cB$, strong convergence implies weak convergence.
\end{Remark}

As stated in  \cite[Theorem~4.1]{Fhom1}, under Assumptions $1$, $2^*$, $3$, $4$ 
there exists a $\bbG$--invariant set $\O_{\rm typ}\in \cF$ with $\cP(\O_{\rm typ})=1$ such that 
for  all $\o\in \O_{\rm typ}$ and all  $\l>0$   
the massive Poisson equation \be\label{eq1} -\bbL^\e_\o u_\e+\l u_\e =f_\e\en
with $f_\e\in L^2(\mu^\e_\o)$ 
 stochastically \rot{homogenizes} towards the effective homogenized equation
  \be\label{eq2}  - \nabla_* \cdot D\nabla_* u + \l u =f \en
 with $f\in L^2(m dx)$ when $f_\e$ converges (weakly or strongly) to $f$. 
 \rot{The above homogenization corresponds to  the  convergence of solutions, i.e. 
  \be\label{fretta} f_\e \toup f \;\Longrightarrow \; u_\e \toup u\,, \qquad \qquad 
 f_\e \to f  \;\Longrightarrow \; u_\e \to u\,,
\en
 and the convergence of flows and energies (we refer to    \cite[Theorem~4.1]{Fhom1}   for a precise statement of flow and energy convergence, which anyway is  not used below).}
  Note that \eqref{eq1} and \eqref{eq2} can be rewritten as     $u_\e= R^\e _{\o ,\l}  f_\e$  and $u = R_\l f$, respectively. 
% We refer to    \cite[Theorem~4.1]{Fhom1}   for a precise statement of the above homogenization result (we also point out that, strictly speaking, $\nabla u$ in \eqref{eq2} should be replaced by $\nabla_* u$, where $\nabla_* $ is the  orthogonal projection of the weak gradient $\nabla$ along $({\rm Ker }D)^\perp$). 
 
%  Moreover, in the first part of \cite[Theorem~4.4]{Fhom1}, we have derived from \cite[Theorem~4.1]{Fhom1} that under Assumption  $1$, $2^*$, $3$, $ 4$, 
%  given  $\o \in \O_{\rm typ}$, $f \in C_c (\bbR^d)$,   $t\geq 0$ and $\l>0$, it holds
% \begin{align}
% &  L^2(\mu^\e _\o ) \ni P^\e_{\o,t} f \to P_t f \in L^2(mdx)\,, \label{marvel0}
%  \\
% &   L^2(\mu^\e _\o ) \ni R^\e_{\o,\l} f \to R_\l  f \in L^2(mdx)\,. \label{vinello}
%  \end{align}
 
%   Using the above result and under an additional assumption called (A9) in \cite{Fhom1}, in the second part of    \cite[Theorem~4.4]{Fhom1}  we have derived a suitable  convergence of semigroups and resolvents, particularly useful to study the hydrodynamic limit of interacting particle systems  (see below).

%  %obtai  ned by considering multiple random walks as above and by adding a site exclusion or zero range interaction (see \cite{F1,F2,S_SEP,F_in_prep} and references therein). 
% Assumption (A9) was used in order to extend some homogenization of the random measure $\mu_\o$ also at infinity.
 
% Our results presented in Section \ref{appl_random_meas} and based on the ergodic Theorem  \ref{erg_thm} with weighted averages allow indeed to remove the additional assumption (A9). Indeed now

\smallskip

\rot{
In \cite[Section~4]{Fhom1} we introduced the additional assumption (A9) that we recall here:}

\rot{
{\bf Assumption (A9) in \cite{Fhom1}}: \emph{At least one of the following properties  is fulfilled:
 \begin{itemize}
 \item[(i)] For $\cP$--a.a.~$\o$ 
  $\exists C(\o)>0$ such that $\mu_\o ( \t_k  \D) \leq C(\o)$ for all $k\in \bbZ^d$.
  \item[(ii)] Setting $N_k(\o):=\mu_\o (\t_k\D)$ for $k\in \bbZ^d$, for some $C_0\geq 0$  it holds     $\bbE[N_0^2]<\infty$ and 
   \be\label{intinto}
| \text{Cov}\,(N_k, N_{k'})| \leq  
  C_0 |k-k'| ^{-1} 
   \en
  for any $k \not = k'$ in $\bbZ^d$. More generally,  we assume that, at cost to enlarge the probability space, one can define random  variables $(N_k) _{k\in \bbZ^d}$ with $\mu_\o (\t_k \D)\leq N_k$,  such that $\bbE[N_k],\bbE[N_k^2]$ are bounded uniformly in $k$ and  such that \eqref{intinto} holds for  all $k\not=k'$.
 \end{itemize}
 }}

\rot{Assumption (A9) was introduced to obtain in     \cite[Theorem~4.4]{Fhom1}  the limits  \eqref{marvel1}, \eqref{ondinoA}, \eqref{marvel2} and \eqref{ondinoB} of Theorem \ref{teo2} below   for $f\in C_c(\bbR^d)$.  We point out that  \eqref{marvel2} and \eqref{ondinoB} are crucial in getting the  hydrodynamic limit of interacting particle systems (see~\cite{F_SEP,F_in_prep}) and are derived from  \eqref{marvel1} and \eqref{ondinoA}. The main result  of this section, given by Theorem \ref{teo2} below,  is that Assumption (A9) is indeed unnecessary and can be removed thanks to our ergodic theorems. Moreover, the above limits are extended to continuous functions $f$ on $\bbR^d$ decaying suitably fast at infinity.}
 To state \rot{this} theorem we set
  \[
    \cG(r):=\big\{f \in  C (\bbR^d) \,:\,
    \exists C>0,\;\rot{\exists} \b>r  \text{ with }|f(x)|\leq C (1+|x|)^{-\b} \; \forall x \in \bbR^d \big\}\,.
    \]
    \rot{Note that the functional set  $C_*(\bbR^d)$ introduced in Definition \ref{def_goloso} equals $\cap _{r>0}\cG(r)$.}
Moreover, \rot{recall}  the fundamental cell $\D$ introduced in \eqref{birra}.

  \begin{Theorem}\label{teo2}  Let Assumptions $1$, $2^*$, $3$, $4$  be satisfied. Then there exists a measurable set $\O^*_{\rm typ}\subset\O_{\rm typ}$,
    $\bbG$--invariant and with  $\bbP$--probability one,
such that  as $\e \da 0 $ the following limits  hold for any $\o \in \O^*_{\rm typ}$,
   $t\geq 0$,  $\l>0$ and $f\in \rot{C_* (\bbR^d)}$: 
  \begin{align}
    &  L^2(\mu^\e _\o ) \ni P^\e_{\o,t} f \to P_t f \in L^2(mdx)\,, \label{marvel0}
    \\
    &   L^2(\mu^\e _\o ) \ni R^\e_{\o,\l} f \to R_\l  f \in L^2(mdx)\,, \label{vinello}\\
    &  \int \bigl | P^\e_{\o,t} f(x) - P_t f (x) \bigr|^2 d \mu^\e_\o (x)\to 0\,,\label{marvel1}\\
& \int \bigl | R^\e_{\o,\l} f(x) - R_\l f (x) \bigr|^2 d \mu^\e_\o (x)\to 0\,,\label{ondinoA}\\
    &  \int \bigl | P^\e_{\o,t} f(x) - P_t f (x) \bigr| d \mu^\e_\o (x)\to 0\,,\label{marvel2}\\ 
&    \int \bigl | R^\e_{\o,\l} f(x) -R_\l f (x) \bigr| d \mu^\e_\o (x)\to 0\,.\label{ondinoB}
  \end{align}
  More specifically,  we have
  \begin{itemize}
   \item[(i)]  \eqref{marvel0},  \eqref{vinello}, \eqref{marvel1} and \eqref{ondinoA} hold if   $f\in \cG(d+1)$;   \eqref{marvel2} and \eqref{ondinoB} hold if 
      $f\in  \cG(2d+2)$;
      \item[(ii)]
if in addition $\bbE\big[\mu_\o(\D)^\a]<+\infty$ for some $\a>1$,   \rot{then} \eqref{marvel0},  \eqref{vinello}, \eqref{marvel1} and \eqref{ondinoA} hold \rot{when}  $f\in \cG(d/2)$ \rot{while} \eqref{marvel2} and \eqref{ondinoB} hold \rot{when}
$f\in  \cG(d)$.
\end{itemize}
\end{Theorem}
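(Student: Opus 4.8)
The plan is to bootstrap from the homogenization result \cite[Theorem~4.4]{Fhom1} (which under Assumption (A9) already gives the conclusions for $f\in C_c(\bbR^d)$) by a density/truncation argument, using our new control at infinity of the rescaled measures $\mu^\e_\o$ (Lemma \ref{cinguetto} and Corollary \ref{lollo_bronzo}) to replace the role of (A9). First I would fix a function $f$ in the relevant class $\cG(\cdot)$ and, for $\ell>0$, decompose $f = f_\ell + g_\ell$ where $f_\ell := f\,\chi_\ell$ with $\chi_\ell\in C_c(\bbR^d)$ a smooth cutoff equal to $1$ on $\{|x|\le \ell\}$ and supported in $\{|x|\le 2\ell\}$, so $f_\ell\in C_c(\bbR^d)$ and $|g_\ell(x)|\le C(1+|x|)^{-\b}\mathds{1}_{\{|x|\ge \ell\}}$. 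Since $P^\e_{\o,t}$ and $R^\e_{\o,\l}$ are contractions (resp.~contraction up to the factor $1/\l$) on $L^2(\mu^\e_\o)$, and similarly $P_t$, $R_\l$ on $L^2(m\,dx)$, the error terms in \eqref{marvel0}--\eqref{ondinoB} coming from $g_\ell$ are controlled by $\|g_\ell\|_{L^2(\mu^\e_\o)}$, $\|g_\ell\|_{L^1(\mu^\e_\o)}$, $\|g_\ell\|_{L^2(m\,dx)}$, $\|g_\ell\|_{L^1(m\,dx)}$. Here I must be slightly careful with \eqref{marvel1}, \eqref{marvel2} (and \eqref{ondinoA}, \eqref{ondinoB}), since these involve the $L^p(\mu^\e_\o)$-norm of the difference $P^\e_{\o,t}f - P_t f$ evaluated against $\mu^\e_\o$; but writing $P_t f$ restricted to $\e\hat\o$ and using the triangle inequality together with the contraction bound $\|P^\e_{\o,t} g_\ell\|_{L^p(\mu^\e_\o)}\le \|g_\ell\|_{L^p(\mu^\e_\o)}$ reduces everything to the two quantities $\|g_\ell\|_{L^p(\mu^\e_\o)}$ and the $L^p(\mu^\e_\o)$-norm of $P_t f_\ell$ restricted to $\e\hat\o$ minus its continuum counterpart, the latter being handled by the $C_c$-case and by strong convergence $\mathds{1}_{\e\hat\o}\rightharpoonup 1$ (Remark \ref{rinomato}).

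The heart of the argument is then: for $p\in\{1,2\}$,
\be\label{heart}
\lim_{\ell\uparrow\infty}\ \varlimsup_{\e\da 0}\ \|g_\ell\|^p_{L^p(\mu^\e_\o)}=0
\en
for $\o$ in a suitable full-measure $\bbG$-invariant set. For $p=1$ this is exactly the statement of Lemma \ref{cinguetto} with $\vartheta(r)=(1+r)^{-\b}$, valid when $\b>2d+2$ (case (i)), or of Corollary \ref{lollo_bronzo} when $\b>d$ under the moment assumption (case (ii)). For $p=2$ one notes that $|g_\ell(x)|^2\le C^2(1+|x|)^{-2\b}\mathds{1}_{\{|x|\ge\ell\}}$, so $\|g_\ell\|^2_{L^2(\mu^\e_\o)}$ is bounded by $C^2\int_{\{|x|\ge\ell\}}(1+|x|)^{-2\b}d\mu^\e_\o(x)$, and \eqref{heart} for $p=2$ follows again from Lemma \ref{cinguetto}/Corollary \ref{lollo_bronzo} applied to the function $r\mapsto(1+r)^{-2\b}$, provided $2\b>2d+2$, i.e.~$\b>d+1$ (case (i)), respectively $2\b>d$, i.e.~$\b>d/2$ (case (ii)). This explains the exponent thresholds: the $L^2$-type conclusions \eqref{marvel0}, \eqref{vinello}, \eqref{marvel1}, \eqref{ondinoA} need $\b>d+1$, hence $f\in\cG(d+1)$ (resp.~$\cG(d/2)$), while the $L^1$-type conclusions \eqref{marvel2}, \eqref{ondinoB} need $\b>2d+2$, hence $f\in\cG(2d+2)$ (resp.~$\cG(d)$). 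I would also use the elementary analogue of \eqref{heart} for the Lebesgue measure $m\,dx$ in place of $\mu^\e_\o$, which is immediate since $\int_{\{|x|\ge\ell\}}(1+|x|)^{-q}dx\to 0$ as $\ell\to\infty$ whenever $q>d$ — and both $2\b>d$ and $\b>d$ hold in all our cases.

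Finally I would assemble the pieces. Fix $\o$ in the intersection $\O^*_{\rm typ}$ of $\O_{\rm typ}$ (from \cite[Theorem~4.1]{Fhom1}), the set $\cB$ of Remark \ref{rinomato}, and the full-measure sets $\cC$ (resp.~$\cC_\vartheta$) supplied by Lemma \ref{cinguetto} (resp.~Corollary \ref{lollo_bronzo}) for the countably many exponents $2\b,\b$ with $\b$ rational in the admissible ranges; this is a countable intersection of $\bbG$-invariant full-measure sets, hence $\bbG$-invariant with $\bbP(\O^*_{\rm typ})=1$, and a monotonicity/approximation argument upgrades from rational to arbitrary admissible $\b$. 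For such $\o$, given $\delta>0$ choose $\ell$ large so that the $\varlimsup_{\e\da 0}$ of all $g_\ell$-error terms is $<\delta$; then apply \cite[Theorem~4.4]{Fhom1} (whose (A9) is now irrelevant, since on the compactly supported $f_\ell$ we only invoke the $C_c$-version of homogenization, which holds under Assumptions $1$, $2^*$, $3$, $4$ alone — this is the point where removing (A9) is legitimate) to conclude that the $f_\ell$-terms tend to $0$; letting $\e\da0$ and then $\delta\da0$ gives all six limits. The main obstacle I anticipate is the bookkeeping in the mixed-norm estimates \eqref{marvel1}--\eqref{ondinoB}: one must be careful that the quantity being controlled is $\|(P^\e_{\o,t}f - P_tf)\!\restriction_{\e\hat\o}\|_{L^p(\mu^\e_\o)}$ and not a continuum norm, so that the contraction property of $P^\e_{\o,t}$ on $L^2(\mu^\e_\o)$ (and on $L^1(\mu^\e_\o)$, which also holds since the semigroup is sub-Markovian) can be brought to bear on the tail piece, while the head piece $P_t f_\ell\!\restriction_{\e\hat\o}$ is matched to its continuum value via strong-to-weak convergence of the rescaled measures as in \cite[Theorem~4.4]{Fhom1}. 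Everything else is routine truncation.
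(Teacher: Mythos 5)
Your tail estimate for $\|g_\ell\|^p_{L^p(\mu^\e_\o)}$ and the exponent bookkeeping (needing $2\b>2d+2$ for the $L^2$ statements and $\b>2d+2$ for the $L^1$ ones, with the weakened thresholds under the moment assumption) correctly capture one ingredient of the argument, namely the use of Lemma \ref{cinguetto} and Corollary \ref{lollo_bronzo} as the replacement for (A9). However, the central reduction in your proposal is circular. You truncate $f=f_\ell+g_\ell$ and claim that the compactly supported piece is ``handled by the $C_c$-case'' of \cite[Theorem~4.4]{Fhom1}, asserting that this $C_c$-version ``holds under Assumptions $1$, $2^*$, $3$, $4$ alone''. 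It does not: \cite[Theorem~4.4]{Fhom1} is stated precisely for $f\in C_c(\bbR^d)$ and precisely under (A9), and the convergences \eqref{marvel1}--\eqref{ondinoB} for compactly supported $f$ are exactly the statements whose proof requires a control at infinity. The difficulty that (A9) addresses is not the tail of $f$ but the tails of $P^\e_{\o,t}f$ and $R^\e_{\o,\l}f$ --- which spread over all of $\e\hat\o$ even when $f$ has compact support --- integrated against a measure $\mu^\e_\o$ whose mass on far-away balls is a priori unbounded. Truncating $f$ therefore does not reduce the difficulty at all.

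What \emph{is} available without (A9) is \cite[Theorem~4.1]{Fhom1}, which gives weak and strong convergence of $R^\e_{\o,\l}f_\e$ only in the two-scale sense of Definition \ref{debole_forte}; this is strictly weaker than $\|R^\e_{\o,\l}f-R_\l f\|_{L^2(\mu^\e_\o)}\to 0$. The actual proof starts from there and then (a) shows that $P_tf$ and $R_\l f$ inherit the polynomial decay of $f$ via heat-kernel estimates (Claim \ref{auto1}), and (b) upgrades strong two-scale convergence to convergence of the $L^2(\mu^\e_\o)$-norm of the difference by expanding the square and truncating the \emph{limit function} $R_\l f$, which is where Lemma \ref{cinguetto} enters (Claim \ref{auto2}); the $L^1$ statements then follow by Cauchy--Schwarz on a large ball plus separate tail bounds for $R_\l f$ and for $R^\e_{\o,\l}f$, the latter as in \cite[page~697]{Fhom1} and requiring $f\ge 0$. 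To repair your argument you would have to supply items (a) and (b) yourself, at which point the truncation of $f$ becomes unnecessary.
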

The proof of Theorem \ref{teo2} is given in Section \ref{squillano}.
%Trivially, the above results apply to $f\in \cS(\bbR^d)\subset \cG$.
%If the reader likes more elegance, it is enough to replace the set  $\cG(\b)$ with its subset  $\cS(\bbR^d)$. 

 \subsection{Hydrodynamics} 
 We conclude by presenting an application of our ergodic Theorem \ref{erg_thm} to the hydrodynamic limit of interacting particle systems.
 As discussed  in   \cite{F_SEP} and \cite{Fhom1}, the limits \eqref{marvel2} and \eqref{ondinoB} are fundamental tools to prove the quenched hydrodynamic behavior  of 
  multiple random walks on $\hat \o$  by adding a site exclusion or zero range interaction (combining stochastic homogenization and duality).   We refer also to \cite{CFS,F1,F2,F_SEP,F_in_prep,FJL} and references therein. 
  
In \cite{F_SEP} we have considered the same  setting presented above, with  
$n_x(\o)=1$ and $r_{x,y}(\o)=r_{y,x}(\o)$. In this case  the random walk $(X_t^\o)_{t\geq 0}$ on $\hat \o$ becomes  a random conductance model  on the simple point process $\hat\o$ \cite{Bi}. Let us  consider  the associated symmetric simple exclusion process (i.e.~ multiple random walks as above interacting by site exclusion). Roughly  its Markov generator is given by
\[
  \cL f (\eta):=\sum_{x\in \hat \o}\sum_{y\in \hat\o} r_{x,y}(\o) \eta(x)(1-\eta(y)) \left( f(\eta^{x,y})-f(\eta)\right)\,,\]
  where $\eta\in \{0,1\}^{\hat \o}$, $f$ varies in a suitable set of  real functions on $\{0,1\}^{\hat \o}$ (including   local functions) and $\eta^{x,y}$ is the configuration obtained by exchanging the occupation numbers $\eta(x)$ and $\eta(y)$. Then,
under Assumptions (A1),..,(A9) of \cite{Fhom1} \rot{(i.e.~our present Assumptions $1$,~$2^*$,~$3$,~$4$ plus (A9))} and the additional Assumption (SEP)\footnote{\ciak{Assumption (SEP)
states the following. Given the environment $\o$, consider  the random graph obtained from the  vertex set $\hat\o$  
by adding an  edge between distinct vertices    $x$ and $y$ in $\hat \o$ with probability $1- e^{-r_{x,y}(\o) t}$,  independently for each  pair $\{x,y\}$.  Then, $\bbP$--a.s.,  for $t>0$ small enough this random graph has a.s. only connected components of finite cardinality.}}
 \rot{introduced in  \cite{F_SEP}} (the latter used for the construction of the  process and the analysis of its Markov generator), we have derived the quenched hydrodynamic limit in path space  for the above symmetric simple exclusion process under  diffusive scaling.
 Assumption (A9) was used to get  \eqref{marvel2} and \eqref{ondinoB} from \cite{Fhom1} (see \cite[Proposition~6.1 \rot{and Remark~6.2}]{F_SEP}). Due to our  Theorem \ref{teo2} we then have the following consequence:
\begin{Corollary}\label{scherzetto}
The quenched hydrodynamic limit in path space stated in \cite[Theorem~4.1]{F_SEP} and described by the hydrodynamic equation $\partial_t \rho=\nabla\cdot(D \nabla \rho)$ holds without assuming Assumption (A9) there.
\end{Corollary}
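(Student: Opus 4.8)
The plan is to treat Corollary \ref{scherzetto} as a bookkeeping statement: Assumption (A9) enters the proof of \cite[Theorem~4.1]{F_SEP} at exactly one point, and our Theorem \ref{teo2} now supplies that input unconditionally. First I would recall the setting of \cite{F_SEP}: there $n_x(\o)\equiv 1$ and the rates are symmetric, $r_{x,y}(\o)=r_{y,x}(\o)$, so that Assumptions (A1),\dots,(A8) of \cite{Fhom1} — equivalently our Assumptions $1$, $2^*$, $3$, $4$ (see Remark \ref{pediculosi}) — hold together with the structural Assumption (SEP), while (A9) is invoked only in order to apply \cite[Proposition~6.1]{F_SEP}. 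That proposition is precisely what furnishes the $L^1(\mu^\e_\o)$-type convergences of the rescaled semigroups and resolvents towards those of the limiting (possibly degenerate) Brownian motion, i.e.\ the limits \eqref{marvel2} and \eqref{ondinoB}, evaluated on the smooth test functions used in the relative-entropy/duality scheme of \cite{F_SEP} (these functions lie in $C^\infty_c(\bbR^d)$, or at worst in $\cS(\bbR^d)$).

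Second, I would observe that the test functions in question belong to the class $\cG(2d+2)$ appearing in Theorem \ref{teo2}. Indeed, if $f\in C(\bbR^d)$ has support contained in $\{|x|\le R\}$, then $|f(x)|\le \|f\|_\infty (1+R)^{2d+2}(1+|x|)^{-(2d+2)}$ for every $x\in\bbR^d$, and Schwartz functions decay faster than any polynomial; hence $C_c(\bbR^d)\subset\cS(\bbR^d)\subset\cG(2d+2)$. Consequently Theorem \ref{teo2}(i) applies to each such $f$ and yields \eqref{marvel2} and \eqref{ondinoB} for all $\o$ in the $\bbG$-invariant, $\bbP$-full-measure set $\O^*_{\rm typ}$, with no appeal to (A9). (Note one only needs item (i) here: in the exclusion setting $\mu_\o(\D)$ counts the points of $\hat\o$ in $\D$ and need not have a moment of order $\a>1$, so item (ii) is not available and not needed.)

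Finally, I would substitute, in the proof of \cite[Theorem~4.1]{F_SEP}, every use of \cite[Proposition~6.1]{F_SEP} by the corresponding assertion of Theorem \ref{teo2}(i), and restrict the "typical" environment set to $\O^*_{\rm typ}$ (still $\bbG$-invariant and of full $\bbP$-measure). Since the construction of the symmetric simple exclusion process and the analysis of its Markov generator $\cL$ rely on (A1),\dots,(A8) and Assumption (SEP), and the remaining ingredients of the hydrodynamic argument — tightness of the empirical measures, characterization of limit points as weak solutions of $\partial_t\rho=\nabla\cdot(D\nabla\rho)$, the replacement/energy estimates, and the uniqueness of the weak solution — use only those assumptions together with the homogenization input \eqref{marvel2}–\eqref{ondinoB}, the whole proof of \cite[Theorem~4.1]{F_SEP} then goes through verbatim without (A9). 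The one point that genuinely requires care, and that I regard as the main (if mild) obstacle, is exactly this audit: one must check that (A9) does not re-enter implicitly elsewhere in \cite{F_SEP} — for instance through an a priori moment bound on the number of points of $\hat\o$ in macroscopic balls, or through an auxiliary estimate internal to the proof of \cite[Proposition~6.1]{F_SEP} — that is not already subsumed by Theorem \ref{teo2} and by the ergodic control of $\mu^\e_\o$ at infinity provided by Lemma \ref{cinguetto}. Once this verification is carried out, Corollary \ref{scherzetto} follows.
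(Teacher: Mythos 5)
Your proposal is correct and follows essentially the same route as the paper: the paper's justification is precisely that Assumption (A9) enters the proof of \cite[Theorem~4.1]{F_SEP} only through \cite[Proposition~6.1]{F_SEP}, i.e.\ only to obtain the $L^1(\mu^\e_\o)$ convergences \eqref{marvel2} and \eqref{ondinoB}, which Theorem \ref{teo2} now supplies without (A9) since $C_c(\bbR^d)\subset \cG(2d+2)$. (Only a trivial slip: the chain $C_c(\bbR^d)\subset\cS(\bbR^d)$ is false for merely continuous compactly supported functions, but the inclusion you actually need and correctly verify, $C_c(\bbR^d)\subset\cG(2d+2)$, is what matters.)
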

In  \cite{F_in_prep}, \ciak{using the results we obtained in \cite{F_muratore},}  we will show that also Assumption (SEP) can be removed from  \cite[Theorem~4.1]{F_SEP}, hence we refer the interesting reader to \cite{F_in_prep} for a more detailed discussion.

%%%%%%%%%%%%%%%%%%%%%%%%%%%%%%%%%%%%%%%%%%%%%%%%%%%%%%%%%%%%%%%%%%%%%%%%%%%%%%%%%%%%%%%%%%%%
\section{Proof of the Maximal Inequality (Theorem \ref{teo_max_in}) }\label{proof_max_in}

%\subsection{A geometric construction used in the maximal ergodic inequality}
%Before proving the theorem we describe a generic geometric construction  following \cite{S}
The proof is inspired by the one for the maximal inequality for the action of the group $\bbZ^d$  with averages on boxes \cite{K,S}, but we use a different covering procedure in order to control the effects of possible  non-zero tails of $\vartheta$. 

The following construction and  Lemma \ref{tasso} below are a standard tool to prove the maximal inequality. We recall them for completeness since crucial  below and since they are usually stated for boxes.
Fix  $I_1\subset I_2\subset \cdots \subset  I_N$    subsets of $\bbZ^d$.
Suppose to have a finite set $\cB\subset \bbZ^d$ and  a function $k: \cB\to\{1,...,N\}$. 
Define $\rot{\mathfrak{M}}_N$
as a  maximal  collection of points $z\in \cB$ such that $k(z)=N$  and the sets
 $z+I_N$ with $z\in \rot{\mathfrak{M}}_N$ are disjoint. 
Then define $\rot{\mathfrak{M}}_{N-1} $ 
as a  maximal  collection of points $z\in \cB$ such that $k(z)=N-1$  and the sets
$z+I_{N-1}$ with $z\in \rot{\mathfrak{M}}_{N-1}$ are reciprocally disjoint and disjoint from the sets $v+I_N$ with $v\in \rot{\mathfrak{M}}_N$. Proceed in this way until defining 
$\rot{\mathfrak{M}}_{1} $ as a  maximal  collection of points $z\in \cB$ such that $k(z)=1$  and the sets
$z+I_1$ with $z\in \rot{\mathfrak{M}}_1$ are reciprocally disjoint and disjoint from the sets $v+\rot{I_{k(v)}}$ with $v\in \rot{\mathfrak{M}}_2\cup \cdots \cup \rot{\mathfrak{M}}_N$.
Finally, we define
\[
     \cB':=   \rot{\mathfrak{M}}_1\cup \rot{\mathfrak{M}}_2\cup \cdots \cup \rot{\mathfrak{M}}_N\,.
   \]
   We stress that by construction all sets
$z+I_{k(z)}$, $k\in \cB'$, are disjoint.

% as a  maximal  collection of sets of the form $z+I_N$ with $z\in \cB$ and $k(z)=N$, reciprocally disjoint. 
%  Then define $\cM_{N-1} $ as a  maximal collection of sets  in $\bbZ^d
%  $ of the form $z+I_{N-1}$ with $z\in \cB$ and $k(z)=N-1$, reciprocally disjoint  and disjoint from any set in $\cM_N$.
%  We proceed in this way until defining 
% $\cM_{1} $ as a  maximal  collection of sets  in $\bbZ^d $ of the form $z+I_{1}$ with $z\in \cB$ and $k(z)=1$, reciprocally disjoint  and disjoint from any set in $\cM_N\cup\cM_{N-1}\cup\cdots \cup \cM_2$.
% Finally, we define
% \[
%   \verde{
%     \cB':= \left\{z\in \cB\,:\, z+I_{k(z)} \in \cM_{k(z)} \right\}\,.}
% \]We stress that by construction all sets
% $z+I_{k(z)}$, $k\in \cB$, are disjoint.

  %%%%%%%%%%%%
 \verde{Below we use the standard notation $z+A-B:=\{z+a-b\,:\, a\in A\,,\; b\in B\,\}$.} 
\begin{Lemma}\label{tasso}
$\cB\subset \cup_{z\in \cB'} \left( z+ I_{k(z)}-I_{k(z)}\right)$ and  $|\cB|\leq \sum_{z\in \cB'} |I_{k(z)}-I_{k(z)}|$.
\end{Lemma}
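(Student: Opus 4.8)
The plan is to prove the two assertions of Lemma \ref{tasso} directly from the construction of $\cB'$, with the first (the covering) being the substantive point and the second (the cardinality bound) an immediate corollary.

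\textbf{The covering claim.} I would argue that every $z\in \cB$ lies in $w + I_{k(w)} - I_{k(w)}$ for some $w\in \cB'$. Fix $z \in \cB$ and let $i := k(z) \in \{1,\dots,N\}$. Consider the stage of the construction at which $\mathfrak{m}_i$ was selected. By the \emph{maximality} of $\mathfrak{m}_i$, the point $z$ could not have been added to $\mathfrak{m}_i$; this means the set $z + I_i$ fails the required disjointness, i.e. $z + I_i$ intersects either some $v + I_i$ with $v \in \mathfrak{m}_i$ already chosen before $z$ would have been considered, or some $v + I_{k(v)}$ with $v \in \mathfrak{m}_{i+1}\cup\cdots\cup\mathfrak{m}_N$ (note $k(v) \geq i$ for such $v$). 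In either case there is some $w \in \cB'$ with $k(w) \geq i = k(z)$ and a point $p \in (z + I_{k(z)}) \cap (w + I_{k(w)})$; writing $p = z + a = w + b$ with $a \in I_{k(z)}$, $b \in I_{k(w)}$, and using $I_{k(z)} \subseteq I_{k(w)}$ (since $k(z)\le k(w)$ and the $I_\ell$ are increasing), we get $z = w + b - a \in w + I_{k(w)} - I_{k(w)}$. The one case not yet covered is when $z$ itself was selected into $\mathfrak{m}_i$, i.e. $z \in \cB'$; then trivially $z \in z + I_{k(z)} - I_{k(z)}$ since $0 \in I_{k(z)} - I_{k(z)}$ provided $I_{k(z)} \neq \emptyset$ (and if some $I_\ell = \emptyset$ the statement is vacuous for those indices). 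This establishes $\cB \subseteq \bigcup_{z\in \cB'}(z + I_{k(z)} - I_{k(z)})$.

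\textbf{The cardinality bound.} This follows from the covering by subadditivity of cardinality under unions together with translation invariance of $|\cdot|$ on $\bbZ^d$:
\[
|\cB| \;\leq\; \Bigl| \bigcup_{z\in \cB'}\bigl(z + I_{k(z)} - I_{k(z)}\bigr)\Bigr| \;\leq\; \sum_{z\in \cB'} \bigl| z + I_{k(z)} - I_{k(z)} \bigr| \;=\; \sum_{z\in \cB'} \bigl| I_{k(z)} - I_{k(z)} \bigr|\,.
\]

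\textbf{Main obstacle.} There is no deep difficulty here; the lemma is essentially a bookkeeping statement about a greedy covering. The one point requiring care is the case analysis in the maximality argument: one must correctly track that when $z$ is rejected at stage $i$, the obstructing set $v + I_{k(v)}$ always has $k(v) \geq i$ (either $v \in \mathfrak{m}_i$ so $k(v) = i$, or $v$ belongs to an earlier-processed $\mathfrak{m}_\ell$ with $\ell > i$), so that the nesting $I_i \subseteq I_{k(v)}$ can be invoked to absorb the translate into $I_{k(v)} - I_{k(v)}$. A secondary subtlety is handling the degenerate possibility that the obstruction is a set $v + I_{k(v)}$ with $v \in \mathfrak{m}_i$ that was added \emph{before} $z$ was examined in the greedy scan within stage $i$ — but this is still an element of $\cB'$ with $k(v) = i$, so the same computation applies. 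I would phrase the proof so that these two sources of obstruction are treated uniformly.
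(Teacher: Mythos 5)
Your proof is correct and follows essentially the same argument as the paper's: the same case analysis via maximality of $\mathfrak{m}_{k(z)}$ (either $z\in\cB'$ or $z+I_{k(z)}$ meets some $v+I_{k(v)}$ with $k(v)\ge k(z)$, whence $z=v+b-a$ with $a\in I_{k(z)}\subset I_{k(v)}$, $b\in I_{k(v)}$), with the cardinality bound then immediate from subadditivity and translation invariance.
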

%%%%%%%%%%%%%%
The proof is \rot{similar to}  the proof of   \cite[Lemma~2.5, Section~6.2]{K}. \rot{We give it for completeness}.
\begin{proof}
Let us prove that $\cB\subset \cup_{z\in \cB'} \left( z+ I_{k(z)}-I_{k(z)}\right)$
(the conclusion then follows immediately).
Let $y\in \cB$. 
By the maximality of $\rot{\mathfrak{M}}_{k(y)}$, there are two possible cases: either  $y\in\rot{\mathfrak{M}}_{k(y)}$ or $y+I_{k(y)}$ intersects some $v+I_{k(v)}$ with  $k(v) \geq k(y)$ and $v\in \rot{\mathfrak{M}}_{k(v)}$ (hence $v\in \cB'$). In the first case, $y \in \cB'$ and trivially $y\in y+ I_{k(y)}-I_{k(y)}$. In the second case, \rot{there exist} $ a\in I_{k(y)}\subset I_{k(v)}$ and $b\in I_{k(v)}$ such that $y+a=v+b$, thus implying that $y=v+b-a\in v+I_{k(v)}-I_{k(v)}$.
 \end{proof}

 %\subsection{Proof of Theorem \ref{teo_max_in}}
 %%%%%%%%%%%%%%%%%%%%%

\rot{We can start the proof of Theorem \ref{teo_max_in}. To this aim let} $\a>0$ and let $N \leq \ell$ be positive integers. 
 All constants of type  $c,C$  below have to be thought \rot{of as} finite, positive,   determined only  by $d,\vartheta,\rosso{\rho},\k$ (hence, independent from $\a,\rosso{\ell,N},f$) and  can change from line to line. 

\smallskip

% By \eqref{lavare} with $f$ replaced by $|f|\in L^1$, we have $\bbE[W_n ^{\psi}(|f|)]<+\infty$.
% Hence  the event $\cG:=\{  W_n ^{\psi}(|f|)<+\infty\}$ has probability $1$. 
% Note that $\o \in \cG$  if and only if the series $\frac{1}{n^d} \sum_{j\in \bbZ^d} \psi( j/n) f( T^j \o)$ is absolutely convergent. Define now
% \be
%\cG_*:=\cap _{z\in \bbZ^d} T^{-z} ( \cG)
%\en 
% Since each $T^z$ is probability-preserving, $\bbP(\cG_*)=1$. We 
 
 Recall \eqref{scorciatoia}. 
We  define
\begin{align*}
& E_N(\a):=\big\{\o\in \O: \sup_{1\leq n \leq N} W^\psi_n ( f) (\o) >\a \big\}\,,\\
&\cA(\o):= \{ z\in \bbZ^d : |z|\leq \ell\,,\; T^z \o \in E_N(\a) \}\,.
\end{align*}
To lighten the notation, sometimes dependence on the parameters   will be omitted (as in the definition of $\cA(\o)$, which indeed depends \rot{also} from $\a,\ell,N$).

\rot{The core of the proof will consist in proving   that, for some $C>0$, 
\be\label{rigore}
\a \bbE\big[ |\cA(\o)|\big] \leq C  \ell^d\|f\|_1 \,.
\en 
We first explain how to conclude once having \eqref{rigore}. Since each $T^z$ is measure-preserving, we have 
\be \label{novantesimo}
\begin{split}
\bbE\big[ |\cA(\o)|\big] &= \bbE[\sum _{z\in \bbZ^d: |z| \leq \ell} \mathds{1}\big( T^z\o  \in E_N (\a) \big) ]\\
&= \sum _{z\in \bbZ^d: |z| \leq \ell} \bbP\big(  T^z\o  \in E_N (\a) \big)\geq \grun{c\,\ell^d} \bbP\big( E_N(\a)\big)\,.
\end{split}
\en
By combining \eqref{rigore} with \eqref{novantesimo} we get 
$\bbP\big( E_N(\a)\big) \leq \grun{C'} \| f \|_1 /\a$ for each integer  $N \geq 1$ ($\ell$ has disappeared). To conclude the proof and get the maximal inequality \eqref{max_ineq}, it is enough to take the limit $N\to +\infty$ and use the  dominated convergence theorem. It remains now to prove \eqref{rigore}. }

\smallskip

Fix $z\in \cA(\o)$. We know that there exists $k(z)$ with $1\leq k(z) \leq N$ such that $W^\psi_{k(z)} (f) (T^z\o) > \a$ (in case of multiple possibile indexes $k(z)$, we take e.g. the minimal one). Hence we have 
\be\label{cruciale}
\rosso{\a k(z)^d <  k(z)^d W^\psi_{k(z)} (f) (T^z\o) =}\sum_{j\in \bbZ^d} \psi\Big( \frac{j-z}{k(z)}\Big) f( T^j \o) \qquad \forall z\in \cA(\o) \,.
\en
Let us consider the \verde{sets}
\[ \rosso{D(m,r):=\{x \in \bbZ^d\,:\, m r \leq |x|<(m+1)r\}\,, \qquad m \in \bbN\,,\; r=1,2,\dots,N\,.}\]
%Note that, fixed $k$, the form a partition of $\bbZ^d$ by varying $m$ among $\bbN$.
If $j\in z+D(m,k(z))$, then  $ m k(z) \leq |j-z|  < (m+1) k(z)$, thus implying that 
$  \psi\Big( \frac{j-z}{k(z)}\Big)  \leq \vartheta \Big(  \frac{|j-z|}{k(z)}\Big) \leq \vartheta (m) $
(recall that $\vartheta$ is \rosso{non-increasing}). In particular
from \eqref{cruciale} \verde{and since $\{z+D(m,r): m\in \bbN\}$ is a partition of $\bbZ^d$ for any $z\in \bbZ^d$ and $r=1,2,\dots,N$,}
 \rot{by taking $r:=k(z)$} we get
\be \label{luce}
\rosso{\a k(z)^d <} \sum_{m=0}^{\rosso{+\infty}} \sum_{j \in \bbZ^d} \vartheta(m) \mathds{1}_{ z+ D(m,k(z))}(j) f( T^j\o)  \qquad \rosso{\forall z\in \cA(\o)} \,.
\en
%where in general  $\mathds{1}(W)$ denotes the charateristic function of the event $W$.
 
At cost to multiply the function $\rho$ by a positive constant, we can assume that $\sum_{m=0}^\infty\rho(m)=1$.  Hence, the  \rosso{l.h.s. of \eqref{luce}} can be rewritten as $\sum_{m=0}^{\rosso{+\infty}} \a k(z)^d \rho(m)$.
%Since, by \eqref{cruciale}, \eqref{luce} is strictly bigger than the last expression, 
 As a consequence (recall that $f\geq 0$)  there must exist some $m(z) \in \bbN$
such that
\be \label{arcobaleno}
  \a k(z)^d \rho  \big(m(z)\big)<
\sum_{j \in \bbZ^d} \vartheta\big(m(z)\big) \mathds{1}_{D(z)}(j ) f( T^j\o)\,,
\en
where $ D(z):= z+ D\big(m(z),k(z)\big)$ 
(in case of many possible $m(z)$'s we take e.g. the minimal one).

\smallskip

For each  $m\in \bbN$  we set  $ \cA_m (\o):= \{ z \in \cA(\o)\,:\, m(z)=m \}$ and
 apply Lemma \ref{tasso} with $\cB:= \cA_m(\o)$, function $k: \cB \to \{1,2,\dots, N\}$ as above and sets $I_1^m\subset I_2^m\subset \cdots \subset I^m_N$ given by
\[
I^m_r:= \{x \in \bbZ^d\,:\, |x|< (m+1) r\} \qquad r =1,2,\dots,N\,. 
\]
The construction presented before Lemma \ref{tasso}  then produces a subset $\cA_m'(\o) $ of $\cA_m(\o)$ (i.e. $\cB'=\cA_m'(\o)$ is the set produced by the construction). By Lemma \ref{tasso} we then have 
\be \label{luna}
\begin{split}
| \cA_m(\o) | & \leq \sum_{z \in \cA_m'(\o)} \Big| I^m _{k(z)}-I^m_{k(z)}\Big| \leq c (m+1)^d\sum_{z \in \cA_m'(\o)}  k(z)^d\\ &= c\frac{ (m+1)^d}{\a \rosso{\rho} (m)}\sum_{z \in \cA_m'(\o)} \a  k(z)^d \rosso{\rho}(m)  \,.
\end{split}
\en
Above we used that  in general $  I^m_r-I^m_r\subset  \{x \in \bbZ^d\,:\, |x|< 2 (m+1) r\}$ \rot{and the last set}  has $c(d, \k) (m+1)^d r^d$ points \rot{(we recall that $|x|$ denotes the $\ell^\k$--norm of $x$ with   $k\in [1,+\infty]$)}.
As a byproduct of  \eqref{arcobaleno} and \eqref{luna}, and since $m(z)=m$ for all $z \in \cA'_m(\o) \subset \rosso{\cA_m(\o)}$, we  have 
\be \label{friuli10}
\a | \cA_m(\o) | <c\frac{ (m+1)^d}{ \rosso{\rho} (m)}  \sum_{z \in \cA_m'(\o)}  \sum_{j \in \bbZ^d} \vartheta(m) \mathds{1}_{D(z)} (j) f( T^j \o)\,.
\en

Recall that, given $z\in \cA'_m(\o)\rosso{\subset\cA(\o)}$, $|z| \leq \ell$ and  $k(z) \leq N \leq \ell$.  If $j \in D(z)$ and $z \in \cA_m'(\o)$, then $|j-z| <(m+1) k(z)$, thus implying that $|j|\leq |z|+(m+1) k(z) \leq (m+2) \ell$. In particular, in the r.h.s. of \eqref{friuli10} we can restrict to $j \in \bbZ^d$ with $|j| \leq (m+2) \ell$. Moreover, by construction the sets $z+ I^m_{k(z)}$ with $z \in \cA_m'(\o)$ are  disjoint, while $D(z) \subset z+ I^m_{k(z)}$ for all  $z \in \cA_m'(\o)$. Hence, also all the sets $D(z)$  with $z \in \cA_m'(\o)$  \rosso{are}  disjoint.  In particular, given $j \in \bbZ^d$ with $|j| \leq (m+2) \ell$, there is at most one $z\in \cA_m'(\o)$ such that $j \in D(z)$. These observations, the non-negativity of $f$ and \eqref{friuli10}  lead to 
\be\label{trieste25}
\a | \cA_m(\o) | <c\frac{ (m+1)^d}{ \rosso{\rho} (m)}\vartheta(m)   \sum_{\substack{j \in \bbZ^d:\\|j| \leq (m+2) \ell}}  f( T^j \o)\,.
\en
By taking the expectation of both sides of \eqref{trieste25} and  using  that each $T^j$ is measure-preserving we get
\be\label{biauzzo67}
\a \bbE\big[ |\cA_m(\o)|\big] \leq c \frac{ (m+2)^{2d}}{\rosso{ \rho} (m)}\vartheta(m) \ell^d \| f\|_1 \,.
\en  
Summing among all $m\in \bbN$ and using that $\cA(\o)$ is the disjoint union of the $\cA_m(\o)$'s, from \eqref{biauzzo67}
 \rot{we  finally get \eqref{rigore} with $ C:= c \sum_{m=0}^{+\infty}  \ (m+2)^{2d} \vartheta(m) \rho (m)^{-1} <+\infty$ (to get that $C$ is finite we used that $\vartheta$ is $d$--good, see    Definition \ref{goodness})}. 

%%%%%%%%%%%%%%%%%%%%%%%%%%%%%%
\section{Proof of the Ergodic Theorem (Theorem  \ref{erg_thm})} \label{proof_erg_thm}
\rot{The main part of the  proof of Theorem  \ref{erg_thm} follows
from the Maximal Inequality of Theorem~\ref{teo_max_in} via a rather standard procedure (see e.g.~ the proof of \cite[Theorem~2.8, Section~6.2]{K} and  \cite[Theorem~2.6]{S}). We prefer to detail the arguments below, both to keep the exposition self-contained and in order to highlight where conditions (i), (ii) and (iii) are used.}
% We keep a notation very close to the one of the proof of \cite[\rosso{Theorem}~2.6]{S} in order to help the comparison. The main structure of the proof is similar. We detail it in order to clarify where the conditions in Items (i), (ii) and (iii) of our Theorem  \ref{erg_thm}  are used.

Recall our shorthand notation $W^\psi_n (f)(\o):=n^{-d} \sum_{j\in \bbZ^d} \psi( j/n) f( T^j \o)$.
% We call  $\tilde \cI\subset \cF$ the $\s$--subalgebra given by the almost-invariant sets, i.e.~$\tilde \cI:=\{ A\in \cF\,:\, \bbP((T_{\rosso{i}} ^{-1} A) \Delta A)=0 \text{ for all } 1\leq \rosso{i}\leq d\}$. Trivially, $\cI \subset \tilde \cI$.
% \rot{By Lemma \ref{gabbiano} in Appendix \ref{app_misc}, 
%%    It is known  that, given $A\in \cF$, $A \in \tilde \cI$ if and only if there exists $D \in \cI$ such that $\bbP(A \D D)=0$ (\rot{see}~\cite[Exercise~\rosso{6.1.2}-(iii), Chapter~6]{Du} \rot{for $d=1$, and Lemma \ref{gabbiano} in Appendix~\ref{app_misc} for $d$ generic}).  It then follows that
% $\bbE[f|\cI]= \bbE[f|\tilde \cI]$ $\bbP$--a.s.\,.} Hence, we just need to prove Theorem  \ref{erg_thm} with  $\bbE[f|\tilde\cI]$ instead of  $\bbE[f|\cI]$.  In what follows,  we call a measurable function $g$ on $\O$ \grun{\emph{almost-invariant}} 
% %\grun{(although the  term \emph{almost-invariant} would be more appropriate) 
%  if $g\circ T_i= g $ $\bbP$--a.s. for all $i=1,2,\dots, d$. \rot{Due to Lemma~\ref{gabbiano} a measurable  function  $g$ is \grun{almost--}invariant if and only if  it  is $\tilde \cI$--measurable.}
%
 We call  $\tilde \cI\subset \cF$ the $\s$--subalgebra given by the almost-invariant sets, i.e.~$\tilde \cI:=\{ A\in \cF\,:\, \bbP((T_{\rosso{i}} ^{-1} A) \Delta A)=0 \text{ for all } 1\leq \rosso{i}\leq d\}$. Trivially, $\cI \subset \tilde \cI$.   It is known  that, given $A\in \cF$, $A \in \tilde \cI$ if and only if there exists $D \in \cI$ such that $\bbP(A \D D)=0$  (\rot{for $d=1$ see e.g.~}\cite[Exercise~\rosso{6.1.2}-(iii), Chapter~6]{Du} \rot{and for a generic $d$ take  $D:= \cap _{n\in \bbN} (\cup_{j\in\bbN_{ n} ^d} T^{-j } A )$ where $\bbN_n:=\{m\in \bbN\,:\, m\geq n\}$}).
  It then follows that $\bbE[f|\cI]= \bbE[f|\tilde \cI]$ $\bbP$--a.s.\,. Hence, we just need to prove Theorem  \ref{erg_thm} with  $\bbE[f|\tilde\cI]$ instead of  $\bbE[f|\cI]$.  In what follows,  we call a measurable function $g$ on $\O$ \emph{\rot{almost-}invariant}  if $g\circ T_i= g $ $\bbP$--a.s. for all $i=1,2,\dots, d$ (this is equivalent to the fact that $g$ is $\tilde \cI$--measurable).

\rot{As detailed  in the proof of  \cite[Theorem~2.8, Section~6.2]{K}}, given $\e>0$  one can write $f=\sum_{i=1}^d (g_i - g_i \circ T_i) +h +\varphi$, for suitable functions such that 
 $g_1,\dots, g_d\in L^\infty$, $h\in L^1 $ is 
  \grun{almost--}invariant for all $i=1,2,\dots, d$ \rot{and}  $\|\varphi\|_1<\e$.

   %Simply: firstly take $f_1\in L^2$ with  $\|f-f_1\|_1<\e/2$.  Since $L^2$ is can be decomposed by 

Given  a \rot{measurable and integrable function  $u:\O\to \bbR$}, we define the function 
%\[
%\D(u) (\o) := \varlimsup _{n\to +\infty} n^{-d} \sum_{j\in \bbZ^d} \psi( j/n) u( T^j \o) -\varliminf_{n\to +\infty} n^{-d} \sum_{j\in \bbZ^d} \psi( j/n) u( T^j \o)\,.
%\]
\[
\D (u) (\o) := \varlimsup _{n\to +\infty} W^\psi_n (u)(\o) -\varliminf_{n\to +\infty} W^\psi_n (u)(\o)\,.
\]
The above limsup and liminf are bounded in modulus by $\sup_{n\geq 1} W_n^{|\psi|}(|u|)(\o)$,
%\[ \sup _{n\geq 1} n^{-d} \sum_{j\in \bbZ^d} \psi( j/n) |u|( T^j \o) \]
 which is finite $\bbP$--a.s. by the maximal inequality (applied with $|\psi|$ instead of $\psi$). As a consequence their difference, i.e. $\D (u)(\o)$,  is well-defined  and finite for $\bbP$-a.a.~$\o$ and in particular for all $\o \in \G(u)$, where %$\G(u):=\{\o \in \O\,: \sup _{n\geq 1} n^{-d} \sum_{j\in \bbZ^d} \psi( j/n) |u|( T^j \o)<+\infty\}$ 
 $\G(u):=\{\o\in \O\,:\, \sup _{n\geq 1} W^{|\psi|}_n(|u|)(\o)<+\infty\}$
 (note that $\G(u)\in \cF$ \rot{and $\bbP(\G(u))=1$}).

\smallskip 
$\bullet$ Our first target is to prove that  $\D(f)(\o)=0$ for $\bbP$--a.a.~$\o$, thus implying that  $\bar f(\o):= \lim _{n \to+\infty} W^\psi_n\rosso{( f)}(\o)$ is well defined and finite for $\bbP$--a.a. $\o$.

%the exists a real function $\bar f$ on $\O$ such that $\lim_{n\to \infty}W_n^\psi (f)=\bar f$ $\bbP$--a.s.

 It is simple to check that $\D$ is subadditive, i.e. $\D(f_1+f_2)(\o) \leq \D(f_1)(\o)+ \D(f_2)(\o)$ for all $\o \in \G(f_1)\cap \G(f_2)$. In particular, by writing our $f$ as $f=\sum_{i=1}^d (g_i - g_i \circ T_i) +h +\varphi$ as above, we have 
 \be\label{rientro}\D(f) (\o) \leq \sum_{i=1}^d \D(g_i - g_i \circ T_i)(\o) +\D(h)(\o) +\D(\varphi)(\o)
 \en
 for all 
 $ \o \in \G:= \G(f)\cap \left(\cap_{i=1}^d \G(g_i - g_i \circ T_i) \right)\rosso{ \cap \G(h)} \cap \G(\varphi)$. \rot{Note that $\bbP(\G)=1$}.

  We claim that  $\D(g_i - g_i \circ T_i)(\o)=0$  
    and $\D(h)(\o)=0$ for $\bbP$--a.a. $\o$ in $\G$.
Let us start with $g_i-g_i\circ T_i$. Since  $g_i$ is bounded and due to Item  (ii) in Theorem \ref{erg_thm}, 
we get 
\[
 \Big | W^\psi_n(g_i-g_i\circ T_i) (\o) \Big| 
 \leq \frac{\|g_i\|_\infty }{n^{d}} \sum_{j\in \bbZ^d} |\psi( j/n)- \psi((j-e_i)/n) |\stackrel{n\to \infty}{\rightarrow} 0\,.
\]
 As a consequence, 
 $\D(g_i - g_i \circ T_i)(\o)=0$  for all $\o\in \G$.
Let us move to $\D(h)(\o)$ for $\o\in\G$. 
The \rot{almost-}invariance of $h$ implies 
%We know that  $h (T_i \o)=h(\o)$ for all $i=1,\dots, d$ and for all $\o\in \G'$ where $\G'$ is a measurable set with $\bbP(\G')=1$.  This implies
 that  $h(T^j \o)=h(\o)$ for all  $\o\in \G'$,   where $\G'$ is a measurable set with $\bbP(\G')=1$.  Due to Item  (iii) in Theorem \ref{erg_thm}, 
   we then  have 
  \be\label{limB}
  W^\psi_n(h) (\o)=\frac{h(\o)}{  n^{d}} \sum_{j\in \bbZ^d} \psi( j/n) \stackrel{n\to \infty}{\rightarrow} c(\psi) h(\o)  \qquad \forall \o \in \G\cap\G'
    \,.
    \en This implies that $\D(h)(\o)=0$ for all $\o\in \G\cap\G'$, thus concluding the proof of our claim.
    
    As a byproduct of \eqref{rientro} and the above claim, we conclude that $\D(f)(\o) \leq \D(\varphi)(\o)$ for $\bbP$--a.a. $\o\in \G$, \rot{while $\bbP(\G)=1$}. By this observation,  the maximal inequality (cf.~Theorem \ref{teo_max_in}) and since $\|\varphi\|_1<\e$, we have 
        \begin{equation*}
        \begin{split}
        & \bbP( \D(f) >\sqrt{\e})  \leq \bbP( \D(\varphi) >\sqrt{\e})        \\
        & \leq 
    \bbP\Big( 2 \sup _{n\geq 1} \frac{1}{n^{d}} \sum_{j\in \bbZ^d} |\psi|( j/n)\, |\varphi|( T^j \o)>\sqrt{\e}\Big)  
     \leq  \frac{2 C \|\varphi\|_1}{\sqrt{\e}}\leq  2 C \sqrt{\e}\,,
    \end{split}
    \end{equation*}
    for some $C=C(d, \vartheta, \rho, \k)$.
    By the arbitrariness of $\e>0$, we conclude that $\D(f)(\o)=0$ for $\bbP$--a.a. $\o$.
    
$\bullet$ We now prove  that $\bar f$ is \grun{almost--}invariant, i.e.~$\bar f=\bar f\circ T_i$  $\bbP$--a.s. for all $i=1,\dots, d$. 
% Since \[ W^\psi_n(f)(\o)-W^\psi_n(f) (T_i\o)= \frac{1}{n^{d}} \sum_{j\in \bbZ^d} \psi( j/n) f( T^j \o)-\frac{1}{n^d}\sum_{j\in \bbZ^d} \psi( j/n) f( T^{j+e_i} \o)\,, \]
By the measure-preserving property of $T_i$, the limit $ \lim _{n \to+\infty} W^\psi_n (f)(T_i \o)$ exists and is \rot{finite} $\bbP$--a.s. and according to our notation it is given by $\bar f (T_i\o)$. 
 By the  Markov inequality and the measure\rosso{-}preserving property of $T^j$  we get
\be\label{fornite}
\bbP(|W^\psi_n(f)(\o)-W^\psi_n(f) (T_i\o)|>\e)
 \leq \frac{\|f\|_1}{\e n^d}  \sum_{j\in \bbZ^d}\big| \psi(j/n)-\psi( (j-e_i)/n)\big| \,.
\en
Since, by Item (ii),   $\g(n):=n^{-d} \sum_{j\in \bbZ^d}\big| \psi(j/n)-\psi( (j-e_i)/n)\big| \to 0$, for each 
$k\in \bbN_+$  we can find $n_k\in \bbN_+$ such that $\g(n_k) \leq k^{-3}$ and such that the sequence $n_k$ is increasing. At this point, by taking $n=n_k$ and $\e= 1/k$ in \eqref{fornite} and afterwards  by applying  Borel-Cantelli lemma, we conclude that  $\lim_{k\to +\infty} \left[ W^\psi_{n_k}(f)(\o)-W^\psi_{n_k}(f) (T_i\o)\right]=0$ for $\bbP$--a.a.~$\o$. Since the limit equals $\bar f(\o)-\bar f (T_i\o)$  for $\bbP$--a.a.~$\o$, we get that $\bar f$ is \grun{almost--}invariant.

%The conclusion is rather standard. We sketch it for completeness.

\smallskip

$\bullet$  We now  prove that $W_n^\psi ( f) \to \bar f$ in $L^p$ since $f\in L^p$. To this aim, we observe that for $f\in L^\infty \cap L^p$ this follows \rot{(as $p\in [1,+\infty)$)} from the above proved a.s. convergence $W_n^\psi( f) \to \bar f$ and the dominated convergence theorem  (for the latter  use that \rosso{$\|W_n^\psi (f)\|_\infty  \leq
\| f\|_\infty \frac{1}{n^{d}} \sum_{j\in \bbZ^d} |\psi( j/n)|$ and the r.h.s.} is bounded uniformly in $n$ \rosso{by Item (iii)}). To  
 extend the convergence to any $f\in L^p$,  
 we proceed as follows.
 First we point out that, 
 %it is enough to use the above $L^p$--convergence $W_n^\psi ( f) \to \bar f$ for $f\in L^\infty\cap L^p$, the density of   $  L^\infty \cap L^p$ in $L^p$ and the fact that,
   given $h\in L^p$, \rot{for some $C=C(\psi)$ it holds}
\be\label{tucano}
\| W^\psi_n( h)\|_p \leq \frac{1}{ n^d} \sum_{j\in \bbZ^d} |\psi( j/n)|\, \|  h\circ  T^j \|_p=\frac{\| h\|_p }{ n^d} \sum_{j\in \bbZ^d} |\psi( j/n)| \leq C \|h\|_p\,.
\en
Above, the  identity follows from the measure-preserving property of $T^j$ and the last bound follows from \rosso{Item (iii)}.  
Given $\d>0$ let $g\in  L^\infty\cap L^p$ with $\|f-g\|_p\leq \d$ ($g$ exists since $L^\infty\cap L^p$ is dense in $L^p$). By \eqref{tucano} applied to $h=f-g$ and  since $W_n^\psi(g)\to \bar g$ in $L^p$ we get that   $\| W^\psi_n( f)-\bar g \|_p\leq 2C \d$ for  $n$ large. By the arbitrariness of $\d$, this proves that $(W^\psi_n( f):n\geq 1)$ is a Cauchy sequence in $L^p$ and therefore it converges to some $\hat f\in L^p$. This implies that $W^\psi_n( f)\to \hat f $  $\bbP$--a.s. along a subsequence. Since $W^\psi_n( f)\to \bar f$ $\bbP$--a.s., we conclude that $\hat f=\bar f$ $\bbP$--a.s. and therefore   $W^\psi_n( f)\to \bar f$ in $L^p$.

\smallskip
$\bullet$ Finally we prove   that $\bar f = c(\psi) \bbE[ f|\tilde\cI] $ $\bbP$--a.s.
To this aim  set $a_n:=n^{-d} \sum_{j\in \bbZ^d}\psi(j/n)$ and 
take \rot{an almost--}invariant (i.e. $\tilde\cI$--measurable) bounded  function $g$. Then (using that $g\circ T^{-j}=g$ $\bbP$--a.s. and  $\bbE[g|\tilde\cI]=g$) we get 
\begin{equation*}
\begin{split} 
\bbE\left [ g \bbE[W^\psi_n\rosso{(f)}|\tilde\cI]\right]& =\bbE\left [  \bbE[gW^\psi_n\rosso{( f)}|\tilde\cI]\right]=
\bbE [ g W^\psi_n\rosso{( f)}] = \frac{1}{n^d} \sum_{j\in \bbZ^d}\psi(j/n)\bbE[ g (f\circ T^j)]\\
&=\frac{1}{n^d}\sum_{j\in \bbZ^d}\psi(j/n)\bbE[ (g\circ T^{-j}) f ]
= a_n \bbE[ g  f ]  =a_n \bbE[g \bbE[f|\tilde\cI]]\,.
\end{split}
\end{equation*}
 This prove that $\bbE[ W^\psi_n\rosso{( f)} | \tilde \cI]= a_n \bbE[f|\tilde\cI]$. 
 We know that 
  $W^\psi_n\rosso{( f)} \to \bar  f\;$ 
  in $L^p$. On the other hand, conditional expectation is a contraction in  $L^p$ (see \cite[Theorem~4.1.11, Chapter.~4]{Du}), thus implying that 
   $\bbE[ W_n^\psi\rosso{( f)} | \tilde\cI]\to \bbE[\bar f|\tilde\cI]$ in $L^p$. Since $\bar f$ is \grun{almost--}invariant we have $ \bbE[\bar f|\tilde\cI]=\bar f$. 
 By combining the above observations we have that $a_n \bbE[f|\tilde\cI]\to \bar f$ in $L^p$. On the other hand, $a_n \to c(\psi) $ by Item (iii) and this allows to conclude that $\bar f= c(\psi) \bbE[f|\tilde\cI]$.

 %%%%%%%%%%%%%%%%%%%%%%%%%%%%%%%%%%%%%%%%%%%%%%%%%%%%%%%%%%%%%%%%%%%%%%%%%%%%%%%%%%%%%%%%%%%%%%%%%%%%%%%%%%%%%%%%%%%%%%%%%%%%%%%%%%%%%%%%%%%%%%%%%%%%%%%%%%%%%%%%%%%%%%%%%%%%%%%%%%%%%%%%%%%%%%%%%%%%%%%%%%%%%%%%%%%%%%%%%%%%%%%%%%%%%%%%%%%%%%%%%%%%%%%%%%%%%%%%%%%%%%%%%%%%%%%%%%%%%%%%%%%%%%%%%%%%%%%%%%%%%%%%%%%%%%%%%%%%%%%%%%%%%%%%%%%%%%%%%%%%%%%%%%%%%%%%%%%%%%%%%%%%%%%%%%%%%%%%%%%%%%%%%%%%%%%%%%%%%%%%%%%%%%%%%%%%%%%%%%%%%%%%%%%%%%%%%%%%%%%%%%%%%%%%%%%%%%%%%%%%%%%%%%%%%%%%%%%%%%%%%%%%%%%%%%%%%%%%%%%%%%%%%%%%%%%%%%%%%%%%%%%%%%%%%%%%%%%%%%%%%%%%%%%%%%%%%%%%%%%%%%%%%%%

\section{Applications to random measures: Proof of Theorem \ref{teo_tartina} and Lemma \ref{cinguetto}}\label{settimino}
In this section we first show how to derive Theorem \ref{teo_tartina} from Lemma \ref{cinguetto}, afterwards we prove the latter. In what follows, given $\ell>0$ we consider the ball  $B(\ell):=\{x\in \bbR^d\,:\, |x|\leq \ell\}$ and set  $B(\ell)^{\rm c}:=\bbR^d\setminus B(\ell)$.
%Moreover, we denote by $C_c(\bbR^d)$ the set of continuous functions with compact support on $\bbR^d$.

\rot{All constants of type  $c,C$  below have to be considered finite, positive and  not dependent from $\o$ and the parameters $\e, \ell, n$ introduced in the rest. Moreover, their value can change from line to line.} 

\subsection{Proof of Theorem \ref{teo_tartina}}  \rot{The conclusion concerning $C_*(\O)$ is trivial and we focus on the rest}.
Trivially, it is enough to prove the stated property   for the family of  rational  constants  $C>0$ and $\b>2d+2$. By countability, we  just need to prove the statement for a fixed $C>0$ and a fixed $\b>2d+2$. Without loss of generality we can assume $C=1$. We  set $\vartheta (r):=(1+ r)^{-\b}$ for $r\geq 0$ \rosso{and take $\varphi\in C(\bbR^d)$ with $|\varphi(x) |\leq \vartheta(|x|)$ for all $x\in \bbR^d$.}

By Lemma \ref{cinguetto} the integral $\int_{\bbR^d} |\varphi(x) | d\mu_\o^\e(x)$ is finite for all $\o\in \cC$.  From now on we restrict to $\o\in \cC$.
Given a positive $\ell\in \bbN$ take  $\varphi_\ell\in C_c(\bbR^d)$ such that   $\varphi_\ell=\varphi$ on the ball $B(\ell)$ and $|\varphi_\ell (x)| \leq \vartheta (|x|)$ for all $x$.
Then, $|\varphi-\varphi_\ell |$ is zero on $B(\ell)$ and is bounded by $2 \vartheta(|\cdot|)$. This implies that 
\be\label{parrot1}
\Big  | \int_{\bbR^d} \varphi ( x)d\mu^\e_\o(x)- \int_{\bbR^d}  \varphi_\ell ( x)d\mu^\e_\o(x) \Big | \leq 2\int_{B(\ell)^c} \vartheta (| x|)d\mu^\e_\o(x)\en
and
\be\label{parrot2}
\Big |     \int _{\bbR^d} \varphi(u) du-
 \int _{\bbR^d} \varphi_\ell(u) du\Big | 
\leq 2 \int _{ B(\ell)^{\rm c}} \vartheta (|u|) du\,.\en
The last integral
  goes to zero as $\ell\to +\infty$ since $\vartheta (|\cdot|)$ is integrable on $\bbR^d$.

  \begin{Claim}\label{sale78}  There exists a $\bbG$--invariant set $\cB\in \cF$ with $\cP(\cB)=1$ such that \eqref{piazza_fiume} is true for all functions in $C_c(\bbR^d)$.
    \end{Claim}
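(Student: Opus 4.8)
The plan is to establish the classical almost-sure vague convergence $\mu^\e_\o\to m\,\ell$ and then to upgrade it from a countable convergence-determining family to all of $C_c(\bbR^d)$ by means of a tightness bound. First I would reduce the statement to the convergence $\mu^\e_\o(Q)\to m\,\ell(Q)$, a.s., for each fixed cube $Q$ with dyadic vertices: any $\varphi\in C_c(\bbR^d)$ supported in a ball $B(R):=\{x:|x|\le R\}$ is Riemann integrable, so for every $\delta>0$ there are step functions $s^-\le\varphi\le s^+$, each a finite real linear combination of indicators of dyadic cubes of a fixed grid covering $B(R)$, with $\int_{\bbR^d}(s^+-s^-)\,dx<\delta$. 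Since $\mu^\e_\o\ge0$ one has $\int s^-\,d\mu^\e_\o\le\int\varphi\,d\mu^\e_\o\le\int s^+\,d\mu^\e_\o$; if $\mu^\e_\o(Q)\to m\,\ell(Q)$ for each of the finitely many cubes of the grid, then $\int s^\pm\,d\mu^\e_\o\to m\int s^\pm\,dx$, which differ from $m\int\varphi\,dx$ by at most $m\delta$, and letting $\delta\da0$ yields \eqref{piazza_fiume} for this $\varphi$.

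Second I would prove $\mu^\e_\o(Q)\to m\,\ell(Q)$ a.s., which is the spatial ergodic theorem for the stationary random measure $\mu_\o$. Put $F(\o):=\mu_\o(\D)\in L^1(\cP)$, so $\bbE[F]=m\,\ell(\D)$. By Assumption $2$, for $g\in\bbZ^d$ the mass of $\mu_\o$ on the translated cell $\D+Vg$ equals $\mu_{\theta_g\o}(\D)=F(\theta_g\o)$, and the cells $\{\D+Vg\}_{g\in\bbZ^d}$ tile $\bbR^d$; hence $\mu_\o(B)=\sum_{g:\,\D+Vg\subset B}F(\theta_g\o)$ for any box $B$ that is a union of such cells. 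Therefore, after the linear change of variables $V$, and up to a boundary layer of relative volume $O(\e)$ (and, for non-integer $\e$, up to sandwiching $\e^{-1}Q$ between two cell-unions, using that $\mu_\o$ is a measure), $\e^d\mu_\o(\e^{-1}Q)$ is an arithmetic average of $F\circ\theta_g$ over an expanding box in $\bbZ^d$. The multidimensional ergodic theorem for the $\bbZ^d$-action $(\theta_g)_{g\in\bbZ^d}$ when $\bbG=\bbZ^d$, respectively for the $\bbR^d$-action $(\theta_g)_{g\in\bbR^d}$ (Wiener's theorem) when $\bbG=\bbR^d$ --- both recalled or referenced in the Introduction, see also \cite{K,S} --- together with the ergodicity of $\cP$ (Assumption $1$), then gives a $\bbG$-invariant full-measure set on which $\e^d\mu_\o(\e^{-1}Q)\to\bbE[F]\,\ell(Q)/\ell(\D)=m\,\ell(Q)$. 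When $\bbG=\bbZ^d$ and $V=\mathbb{I}$ this last step can alternatively be deduced directly from Theorem \ref{erg_thm} applied to $\psi=\mathds{1}_Q$, whose hypotheses (i)--(iii) are readily checked, and $f=F$.

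Finally I would take a countable family $\cD\subset C_c(\bbR^d)$ that contains, for each $R\in\bbN_+$, a uniformly dense sequence of $\{\varphi\in C(\bbR^d):\operatorname{supp}\varphi\subset B(R)\}$ together with a function $\chi_R$ satisfying $\mathds{1}_{B(R)}\le\chi_R\le\mathds{1}_{B(R+1)}$, and set $\cB$ equal to the set of $\o$ for which $\mu^\e_\o(Q)\to m\,\ell(Q)$ for every dyadic cube $Q$; by the previous step $\cP(\cB)=1$, and on $\cB$ equation \eqref{piazza_fiume} holds for all $\varphi\in\cD$, so in particular $\varlimsup_{\e\da0}\mu^\e_\o(B(R))\le\varlimsup_{\e\da0}\int\chi_R\,d\mu^\e_\o=m\,\ell(B(R+1))<\infty$. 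Consequently, for $\o\in\cB$ and $\varphi\in C_c(\bbR^d)$ supported in $B(R)$, picking $\varphi_k\in\cD$ with $\operatorname{supp}\varphi_k\subset B(R)$ and $\|\varphi-\varphi_k\|_\infty\to0$, the bound $|\int(\varphi-\varphi_k)\,d\mu^\e_\o|\le\|\varphi-\varphi_k\|_\infty\,\mu^\e_\o(B(R))$ shows that \eqref{piazza_fiume} holds for all $\varphi\in C_c(\bbR^d)$ on $\cB$. That $\cB$ is $\bbG$-invariant follows from $\mu^\e_{\theta_g\o}(A)=\mu^\e_\o(A+\e Vg)$, which gives $\int\varphi\,d\mu^\e_{\theta_g\o}-\int\varphi\,d\mu^\e_\o=\int\bigl(\varphi(\cdot-\e Vg)-\varphi\bigr)\,d\mu^\e_\o\to0$ on $\cB$ by uniform continuity of $\varphi$ and the tightness bound, whence $\o\in\cB\iff\theta_g\o\in\cB$ for every $g\in\bbG$.

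The statement is classical, so I do not expect a deep obstacle: the only point requiring genuine care is invoking the correct form of the multidimensional ergodic theorem --- in particular Wiener's theorem for the $\bbR^d$-action, which is not reproved here but is standard and already referenced (alternatively one can cite the spatial ergodic theorem for stationary random measures, e.g.\ \cite{DV}) --- together with the routine boundary-layer, non-integer-scaling and change-of-variables bookkeeping relating $\mu^\e_\o$ to ergodic averages on $(\O,\cF,\cP)$.
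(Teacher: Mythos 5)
Your proposal is correct and follows essentially the same route as the paper's Appendix \ref{WT1}: reduce \eqref{piazza_fiume} for $\varphi\in C_c(\bbR^d)$ to convergence on indicators of boxes by sandwiching with step functions (using $\mu^\e_\o\geq 0$), obtain the convergence on boxes from the spatial ergodic theorem for stationary random measures (the paper simply cites \cite[Theorem~10.2.IV]{DV}, which is exactly the fallback you offer), and prove $\bbG$--invariance of $\cB$ from the identity $\int\varphi\,d\mu^\e_{\theta_g\o}=\int\varphi(\cdot-\e Vg)\,d\mu^\e_\o$ together with uniform continuity. The only point to tighten in your self-contained version of the middle step is the case $\bbG=\bbR^d$: your cell decomposition produces an average over the $\bbZ^d$--\emph{sub}action, whose ergodicity is not inherited from that of the $\bbR^d$--action, so the $\bbZ^d$ ergodic theorem a priori yields $\bbE[F\,|\,\cI_{\bbZ^d}]$ (with $\cI_{\bbZ^d}$ the invariant $\s$--algebra of the subaction) rather than $\bbE[F]$; one must either observe that $\lim_{\e\da 0}\e^d\mu_\o(\e^{-1}Q)$ is invariant under the full $\bbR^d$--action and hence a.s.\ constant, or apply Wiener's theorem to a genuinely $\bbR^d$--indexed representation of $\mu_\o(\e^{-1}Q)$ --- or simply cite \cite{DV}, as the paper does.
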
 \rot{The above claim} is 
 usually stated without any proof. Since  the derivation is short, we give it for completeness in Appendix \ref{WT1}.

 Due to Claim \ref{sale78},
for all $\ell \in \bbN$  and $\o\in \cB$ we have 
$
\lim_{\e\da 0 } \int_{\bbR^d} \varphi_\ell ( x)d\mu^\e_\o(x) = m \int _{\bbR^d} \varphi_\ell(x) dx$.
By the above observation, \eqref{parrot1} and \eqref{parrot2}  the conclusion follows from Lemma \ref{cinguetto} and by taking $\cA:=\cB\cap \cC$ with $\cC$ as in Lemma \ref{cinguetto}.

\subsection{Proof of Lemma \ref{cinguetto}}   \rot{Since the integral in \eqref{tordo} is decreasing in $\ell$, \eqref{tordo} with $\ell\in \bbR_+$ is equivalent to \eqref{tordo} with $\ell\in \bbN$. From now on $\ell$ has to be considered in  $\bbN$.}
     We set $\psi(x):=\vartheta(|x|)$ and $\psi_\ell (x):=\vartheta(|x|)\mathds{1}(|x|\geq \ell)$.
 % We focus  on \eqref{tordo} and just at the end, by recycling some arguments, we show that $ \int _{\bbR^d}  \vartheta(u)d\mu^\e_\o(u)$ is finite for all $\e>0$ and all $\o$ in the good set $\cC$ described below.
  Then  \eqref{tordo} in Lemma \ref{cinguetto} \rot{can be restated as}
\be\label{duomo}
\lim_{\ell \uparrow \rot{+}\infty}\, \varlimsup_{\e \da 0} \int _{\bbR^d}  \psi_\ell(u)d\mu^\e_\o(u)=0\,.
\en
We call $\cC$ the set of $\o\in \O$ satisfying \eqref{duomo}, i.e.
\be \label{def_cC} \rot{\cC:=\big\{\o \in \O\,:\,\lim_{\ell \uparrow +\infty}  \varlimsup_{\e \da 0} \int _{\bbR^d}  \psi_\ell(u)d\mu^\e_\o(u)=0\big \}}\,.
\en
\rot{To prove Lemma \ref{cinguetto} it is enough  to show the following:   $\cC$ is measurable, $\cC$ is  $\bbG$-invariant, for all $\o \in \cC$  and $\e>0$ the integral $ \int _{\bbR^d}  \psi(x)d\mu^\e_\o(x)$ is finite,  $\bbP(\cC)=1$. 
The first three   properties correspond (with a different order)  to Claims \ref{claim_fin}, \ref{claim_meas} and  \ref{claim_inv}  below. 
We first show  here how the  ergodic Theorem \ref{erg_thm}   implies that    $\bbP(\cC)=1$. 
To this aim we let 
 $\D(z):=\t_z  \D$  (recall   \eqref{birra})  and
set $f( \o):= \mu_\o ( \D)=\mu_\o( \D(0))$. Then, by Assumption 2, we have
\be\label{ortica}
 \mu_\o( \D(z) ) =   \mu_\o( \t_z \D) = \mu_{\theta_z \o} (\D)=  f (\theta_z \o) \qquad \forall z\in \bbZ^d\subset \bbG\,.
 \en
 Note that $\int _{ \O}| f(\o)|d\bbP(\o) = m \ell (\D)<+\infty$, hence $f\in L^1 (\O)$.
 As stated in Claim \ref{claim_pieno} below,    \eqref{duomo} is satisfied whenever
 \be\label{anatra}
 \lim_{\ell\uparrow \infty}
 \varlimsup_{n\uparrow \infty}
 n^{-d} \sum_{z\in \bbZ^d}\psi _{\ell}(z/n)f(\theta_z \o)=0\,,
 \en
 where $\ell , n\in \bbN$. 
 Hence, to prove that $\bbP(\cC)=1$,  we just need to show that the measurable set of  the $\o$'s satisfying \eqref{anatra} has probability one.
}

\rot{Let $e_1,\dots, e_d$ be the canonical basis of $\bbZ^d$. For $i=1,\dots,d$ we  define $T_i:\O \to \O$ as $T_i =\theta_{e_i}$. Then $T_1,\dots, T_d$ are $d$ commuting  measure-preserving bijective maps on $(\O,\cF,\cP)$ (by Assumption $1$ and since $\bbZ^d\subset \bbG$). Moreover the map $T^g:= T_1 ^{g_1}\circ T_2^{g_2}\circ \cdots \circ T_d^{g_d}$ equals $\theta_g$ for all $g=(g_1,g_2,\dots, g_d) \in \bbZ^d$. Hence in \eqref{anatra} we can replace $\theta_z$ by $T^z$.
By Theorem \ref{erg_thm} with $\psi=\psi_\ell $ and with   $\vartheta(r)=(1+r)^{-\b}$, $\b>2d+2$,  there exists $\cC_*\in \cF$ with $\cP(\cC_*)=1$ such that 
for all $\o\in \cC_* $ the following holds: for each $\ell \in \bbN_+$ 
\be\label{iff89}
\lim_{n\uparrow  +\infty} n^{-d}  \sum_{z\in \bbZ^d}\psi _\ell (z/n)f(\theta_z \o)=\bbE[ f|\cI](\o)  \int_{\bbR^d} \psi_\ell (u) du\,,
\en
where 
$\cI:=\{A\in \cF\,:\, \theta _z A=A \;\; \forall z\in \bbZ^d\}
$ and $\bbE[ f|\cI](\o)$ is a fixed version of the conditional probability of $f$ w.r.t. $\cI$.
Since the integral in the r.h.s. of  \eqref{iff89} goes to zero as $\ell\to +\infty$, we conclude that  \eqref{anatra} holds for all $\o \in \cC_*$ and  therefore  $\bbP$--a.s.
This concludes the proof of Theorem \ref{erg_thm} once having Claims \ref{claim_fin}, \ref{claim_meas}, \ref{claim_inv}, \ref{claim_pieno}. 
The rest of the section is devoted to the above claims  and their proofs}.

\begin{Claim}\label{claim_fin} 
If $\o\in \cC$,  then  $ \int _{\bbR^d}  \psi(x)d\mu^\e_\o(x)$ is finite for all $\e>0$.
\end{Claim}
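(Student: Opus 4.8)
The plan is to reduce the statement (finiteness for \emph{all} $\e>0$) to the regime of small $\e$, where the defining property \eqref{duomo} of $\cC$ can be used. The one elementary ingredient is a scaling monotonicity: by \eqref{pioli} (applied twice) and the fact that $\vartheta$ is non-increasing, for every $\e>0$ and every $\d\in(0,\e]$ one has
\[
\int_{\bbR^d}\psi\,d\mu^\e_\o=\e^d\!\int_{\bbR^d}\!\vartheta(\e|x|)\,d\mu_\o(x)\le \e^d\!\int_{\bbR^d}\!\vartheta(\d|x|)\,d\mu_\o(x)=\big(\e/\d\big)^d\!\int_{\bbR^d}\!\psi\,d\mu^\d_\o\,.
\]
Hence it is enough to prove that $\int_{\bbR^d}\psi\,d\mu^\d_\o<+\infty$ for all sufficiently small $\d>0$ (how small may depend on $\o$).

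To establish that, I would invoke \eqref{duomo} directly. Since $\lim_{\ell\uparrow+\infty}\varlimsup_{\d\da 0}\int_{\bbR^d}\psi_\ell\,d\mu^\d_\o=0$, there are $\ell_0>0$ and $\eta>0$ with $\sup_{0<\d<\eta}\int_{\bbR^d}\psi_{\ell_0}\,d\mu^\d_\o<+\infty$; in particular $\int_{\bbR^d}\psi_{\ell_0}\,d\mu^\d_\o<+\infty$ for every $\d\in(0,\eta)$. The complementary (bounded) region is handled by local finiteness: since $\vartheta\le\vartheta(0)=1$ we have the pointwise bound $\psi\le\psi_{\ell_0}+\mathds{1}_{B(\ell_0)}$, and $\mu^\d_\o(B(\ell_0))<+\infty$ because $\mu^\d_\o$ is a locally finite measure and $B(\ell_0)$ is bounded. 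Therefore $\int_{\bbR^d}\psi\,d\mu^\d_\o\le \int_{\bbR^d}\psi_{\ell_0}\,d\mu^\d_\o+\mu^\d_\o(B(\ell_0))<+\infty$ for all $\d\in(0,\eta)$.

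Finally, given an arbitrary $\e>0$, I would pick any $\d\in(0,\min\{\eta,\e\})$; the displayed inequality then gives $\int_{\bbR^d}\psi\,d\mu^\e_\o\le(\e/\d)^d\int_{\bbR^d}\psi\,d\mu^\d_\o<+\infty$, which is the claim. The only genuine point is this last transfer from small $\d$ to a fixed, possibly large, $\e$: the hypothesis $\o\in\cC$ is a statement about the family $(\mu^\d_\o)_{\d>0}$ in the limit $\d\da 0$, so some device — here the monotonicity of $\vartheta$ under rescaling — is needed to convert it into information at a single prescribed scale $\e$; the remaining steps are merely local finiteness of $\mu^\d_\o$ and unwinding the definition of $\varlimsup$.
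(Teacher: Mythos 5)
Your proof is correct and follows essentially the same route as the paper: use \eqref{duomo} to get finiteness of the tail integral for small scales, handle the bounded region by local finiteness of $\mu_\o$, and transfer to arbitrary $\e$ via the monotonicity of $\vartheta$ under rescaling (the paper phrases this as monotonicity of $\e\mapsto\int\vartheta(\e|u|)\,d\mu_\o(u)$ rather than via the explicit $(\e/\d)^d$ constant, but it is the same device). No gaps.
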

\begin{proof} By \eqref{pioli} the above integral \rot{$ \int _{\bbR^d}  \psi(x)d\mu^\e_\o(x)$} equals $\e^d \int _{\bbR^d} \psi_\ell (\e u) d\mu_\o(u)$. Hence for $\o\in \cC$  there exists $\ell$ such that $\e^d \int _{\bbR^d} \psi_\ell (\e u) d\mu_\o(u)\leq 1$ for $\e $ small. Since  $\mu_\o$ is locally finite, we then have that  $\int _{\bbR^d} \psi  (\e u) d\mu_\o(u)<+\infty$ for $\e$ small. \rot{This property extends to all $\e>0$ since
the integral  $\int _{\bbR^d} \psi  (\e u) d\mu_\o(u)$
 is non-increasing} in $\e$ as $\vartheta$ is non-increasing.
  \end{proof}

%\rosso{It remains to prove that $\cC$ is a $\bbG$--invariant measurable set with $\bbP(\cC)=1$. }

\begin{Claim}\label{claim_meas} \rot{The limit \eqref{duomo} holds  if and only if it holds with $\e$ varying in $\{1/n\,:\, n\in \bbN_+\}$}.  In particular,  $\cC$ is measurable.
\end{Claim}
\begin{proof}
%$\bullet$ Let us prove  that $\cC$ is measurable.
%Given $\e, \ell>0$ the map $\O\ni \o \mapsto  \int _{\bbR^d}  \psi_\ell (x)d\mu^\e_\o(x)\in \bbR$ is measurable. \rot{Recall that $\ell\in \bbN$}. 
%It is then enough to show that in \eqref{duomo} we can restrict to a countable family of \rot{$\e$}. To this aim, 
\rot{We} observe that given $\e\leq 1$  and $n\in \bbN_+$ with
 $(n+1)^{-1} \leq \e \leq n^{-1}$,  we have (since $\vartheta$ is non-increasing \rosso{and due to \eqref{pioli}})
 \be
 \begin{split}& \int _{\bbR^d}  \psi_\ell (x)d\mu^\e_\o(x) =
 \e^d \int_{\{ \e |x|\geq \ell\}} \psi(\e x) dx
                                                          \leq n^{-d}  \int_{\{  |x|\geq \ell n \}} \psi( x/(n+1)) dx \\
   & \leq C (n+1)^{-d}  \int_{\{  |x|\geq \frac{\ell}{2}( n+1) \}} \psi( x/(n+1)) dx= C \int _{\bbR^d}  \psi_{\frac{\ell}{2}} (x)d\mu^{1/(n+1)}_\o(x)
 \end{split}
 \en
 and similarly
  \be
 \begin{split}& \int _{\bbR^d}  \psi_\ell (x)d\mu^\e_\o(x) \geq 
 (n+1)^{-d} \int_{\{  |x|\geq \ell (n+1)\}} \psi( x/n) dx
   \\
   &\geq 
              c   n^{-d}  \int_{\{  |x|\geq 2 \ell n \}} \psi( x/n) dx
  = c \int _{\bbR^d}  \psi_{2 \ell } (x)d\mu^{1/n}_\o(x)\,.
 \end{split}
 \en
 As a consequence, \eqref{duomo} is equivalent to the same expression but \rot{with} $\e\in \{1/n\,:\,n\in \bbN_+\}$. Dealing with a countable family of parameters $\e, \ell$ one \rot{obtains}  the measurability of $\cC$ \rot{since the map $\O\ni \o \mapsto  \int _{\bbR^d}  \psi_\ell (x)d\mu^\e_\o(x)\in \bbR$ is measurable}. 
\end{proof}

\begin{Claim}\label{claim_inv} $\cC$ is  $\bbG$-invariant.
\end{Claim}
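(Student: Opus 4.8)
The plan is to deduce the $\bbG$-invariance of $\cC$ from the elementary observation that, by covariance, the rescaled measure attached to $\theta_g\o$ is the one attached to $\o$ translated by the vanishing vector $\e Vg$, and such a translation cannot affect the limit in \eqref{duomo}. It suffices to prove the implication $\o\in\cC\ \Rightarrow\ \theta_g\o\in\cC$ for every $g\in\bbG$: this gives $\theta_g\cC\subseteq\cC$, and running the same implication with $-g$ in place of $g$ gives $\theta_{-g}\cC\subseteq\cC$, i.e.\ $\cC\subseteq\theta_g\cC$; hence $\theta_g\cC=\cC$.

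So fix $g\in\bbG$ and $\o\in\cC$. First I would record the identity
\[
\int_{\bbR^d}\psi_\ell(x)\,d\mu^\e_{\theta_g\o}(x)=\int_{\bbR^d}\psi_\ell(x-\e Vg)\,d\mu^\e_\o(x)\qquad(\e>0,\ \ell>0),
\]
which follows by combining Assumption $2$ (so that $\mu_{\theta_g\o}=\t_g\mu_\o$), the identity \eqref{pollice} with $\t_{-g}x=x-Vg$, and two applications of \eqref{pioli} (to the test functions $y\mapsto\psi_\ell(y)$ and $y\mapsto\psi_\ell(y-\e Vg)$). Next, writing $\delta:=|Vg|$ and restricting to $\e\in(0,1]$, the triangle inequality gives $|x-\e Vg|\geq|x|-\delta$; hence, since $\vartheta(r)=(1+r)^{-\b}$ is non-increasing, $\vartheta(|x-\e Vg|)\leq\vartheta(|x|/2)\leq2^\b\vartheta(|x|)$ as soon as $|x|\geq2\delta$, while $\psi_\ell(x-\e Vg)\neq0$ forces $|x-\e Vg|\geq\ell$, hence $|x|\geq\ell-\delta$. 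Thus for every $\ell\geq3\delta$ one obtains the $\e$-uniform pointwise bound $\psi_\ell(x-\e Vg)\leq2^\b\,\vartheta(|x|)\,\mathds{1}(|x|\geq\ell-\delta)$, which together with the displayed identity yields
\[
\int_{\bbR^d}\psi_\ell(x)\,d\mu^\e_{\theta_g\o}(x)\leq2^\b\int_{\{|x|\geq\ell-\delta\}}\vartheta(|x|)\,d\mu^\e_\o(x)\qquad(\e\in(0,1],\ \ell\geq3\delta).
\]
Taking $\varlimsup_{\e\da0}$ and then letting $\ell\to+\infty$ (so that $\ell-\delta\to+\infty$ as well), the right-hand side tends to $0$ because $\o\in\cC$; this is exactly \eqref{duomo} written for $\theta_g\o$, so $\theta_g\o\in\cC$. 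The same bound, combined with Claim \ref{claim_fin} applied to $\o\in\cC$, also shows that the integrals on the left-hand sides are finite. As explained above, this proves $\theta_g\cC=\cC$ for all $g\in\bbG$.

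I do not expect a real obstacle here: the only point deserving attention is that the translation $\e Vg$ relating $\mu^\e_{\theta_g\o}$ and $\mu^\e_\o$ depends on $\e$, so one needs an estimate that is uniform in $\e\in(0,1]$ \emph{before} taking the limsup; this is precisely what the crude inequality $|\e Vg|\leq|Vg|$ together with the polynomial decay of $\vartheta$ provides, the shift being absorbed into the constant $2^\b$ and into a harmless $\delta$-shrinking of the cutoff radius. (If one works under Assumption $2^*$ rather than Assumption $2$, the same argument goes through verbatim after restricting to the $\bbG$-invariant full-measure set $\O_*$, in agreement with Remark \ref{pediculosi}.)
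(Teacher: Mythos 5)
Your proof is correct and follows essentially the same route as the paper: reduce to showing $\theta_g\cC\subseteq\cC$, use covariance together with \eqref{pollice} and \eqref{pioli} to rewrite the integral for $\theta_g\o$ as a shifted integral against $\mu^\e_\o$, and absorb the $\e$-dependent shift using the monotonicity of $\vartheta$ plus a harmless change of the cutoff radius. The only (cosmetic) difference is in how the shift is absorbed — you use the polynomial form of $\vartheta$ to get a bound uniform in $\e\in(0,1]$ with constant $2^\b$ and cutoff $\ell-\delta$, whereas the paper compares with the rescaled measure $\mu^{\e/2}_\o$ and cutoff $\ell/4$ for $\e$ small.
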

\begin{proof}
  It is enough to prove that  \rot{for all} $ \o \in \cC$ and $g \in \bbG$ it holds $\theta _g \o\in \cC$. Indeed, this means that $\theta_g \cC\subset \cC$  \rot{for all} $ g\in \bbG$. By applying $\theta_g^{-1}=\theta_{-g}$ to both sides, we get $ \cC\subset\theta_{-g} \cC$ \rot{for all} $g\in \bbG$, i.e.  $ \cC\subset\theta_{g} \cC$ for all $g\in \bbG$. Hence $\theta_g \cC=\cC$  \rot{for all} $g\in \bbG$.
  
  Take $\o\in \cC$ and $g\in \bbG$. Due to \eqref{pollice} and \eqref{pioli} we can  write
 \be\label{giulia}
 \begin{split}
\int_{\bbR^d} \psi_\ell ( x)d\mu^\e_{\theta_g\o}(x)& =  \int_{\{|x|\geq \ell\}} \psi(x) d\mu^\e_{\theta_g\o}(x)  =\e^d \int_{\{|\e x|\geq \ell\}} \psi(\e x) d\mu_{\theta_g \o}(x)\\ & =\e^d \int_{\{|\e (x-V g)|\geq \ell\}} \psi(\e( x-V g) ) d\mu_{ \o}(x)\,.
 \end{split}
 \en
% y \e^d \int_{\{|\e x|\geq \ell\}} \vartheta(\e |x|)  d \mu_\o (x) $
Since $g$ is fixed,  given $\ell>0$, for  $\e$ small we have $|\e V  g| \leq \ell/2$ and therefore 
the set $\{x:|\e (x- Vg)|\geq \ell\}$ is included in the set $\{x:|\e  x |\geq \ell/ 2 \}$. 
Moreover, for $\e$ small, we have  $|\e V  g| \leq \ell/4$ and therefore, given $x$ satisfying 
 $ |\e  x |\geq \ell/2 $, we have 
 \rot{$ | \e (x-Vg)| \geq | \e x| - | \e Vg| \geq |\e x| - \ell/4 \geq | (\e/2) x|$}. Since $\vartheta$ is non-increasing, the last bound implies that $\psi(   \e (\rot{x-Vg}) ) \leq \psi(  (\e/2) x)$. \rot{Hence, we can estimate the last integral in \eqref{giulia} by first integrating on $\{x:|\e  x |\geq \ell/ 2 \}$, then by replacing the integrand with $ \psi(  (\e/2) x)$ and afterwards by extending the integration to  $\{x:|\e  x |\geq \ell/ 4 \}$}.
 The above observations and \eqref{giulia} finally imply that
 \begin{equation*}
   \begin{split}
  &   \varlimsup_{\ell\uparrow+\infty}\varlimsup_{\e\da 0} \int_{\bbR^d} \psi_\ell ( x)d\mu^\e_{\theta_g\o}(x)
    \leq \\ &  \varlimsup_{\ell\uparrow+\infty}\varlimsup_{\e\da 0} 
              2^d (\e/2)^d \int_{\{|(\e/2) x |\geq \ell/4 \}} \psi((\e/2) x  ) d\mu_{ \o}(x)=\\&  2^d  \varlimsup_{\ell\uparrow+\infty}\varlimsup_{\e\da 0}   \int_{\{|x|\geq \ell/4\}} \psi(x) d\mu^{\e/2}_\o(x)    \,.
              \end{split}
 \end{equation*}
 To conclude it is enough to observe that the above r.h.s. is zero since $\o\in \cC$. We have therefore proven that  $\theta_g\o \in \cC$.
  \end{proof}

  %%%%%%%%%%%%%%%%%%%%%%%%%%%%%%%%%%%%%%%%%%%%%%%%%%%%%%%%%%%%%%%%%%%%%%%%%%%%%%%%%%%%%%%%%%%%%%%%%%%%%%%%%%%%%%%%%%%%%%%%%%%%%%%%%%%%%%%%%%%%%%%%%%%%%%%%%%%%%%%%%%%%%%%%%%%%%%%%%%%%%%%%%%%%%%%%%%%%%%%%%%%%%%%
\begin{Claim}\label{claim_pieno}  \rot{If $\o$ satisfies \eqref{anatra}, then $\o \in \cC$}.
% $\bbP(\cC)=1$.
\end{Claim}
\begin{proof}
%For the proof we will  apply  the ergodic Theorem \ref{erg_thm}. To this aim we let
% $\D(z):=\t_z  \D$ \rosso{(recall   \eqref{birra})}  and
%set $f( \o):= \mu_\o ( \D)=\mu_\o( \D(0))$. Then, by Assumption 2, we have
%\be\label{ortica}
% \mu_\o( \D(z) ) =   \mu_\o( \t_z \D) = \mu_{\theta_z \o} (\D)=  f (\theta_z \o) \qquad \forall z\in \bbZ^d\subset \bbG\,.
% \en
% Note that $\int _{\rosso{\O}} f(\o)d\bbP(\o) = m \ell (\D)<+\infty$, hence \rosso{$f\in L^1 (\O)$}.
%\rosso{Due to the proof of Claim \ref{claim_meas}, in 
% \eqref{duomo}  we can restrict to $\e$ of the form $1/n$ with $n\in \bbN_+$.}
%
\rot{The proof is divided in two steps. First we 
  show that for some $c>0$ it holds
\be\label{campane}
  \int_{\bbR^d}  \psi_\ell (u)  d\mu^\e_\o(u)
  \leq c\, \e^d \sum_{z\in \bbZ^d}\psi _{\ell -1}(\e V z)f(\theta_z \o)\,.
 \en
 Afterwards we show how to  remove $V$ from \eqref{campane} getting that, for some $C,c>0$, it holds 
 \be\label{campane100}
  \int_{\bbR^d} \psi_\ell (u)  d\mu^\e_\o(u) 
  \leq C\, \e^d \sum_{z\in \bbZ^d}\psi _{\frac{\ell -1}{c}}(\e  z)f(\theta_z \o)\,.
 \en
 Since by Claim \ref{claim_meas} we can  vary  $\e$ among $\{1/n\,:\, n \in \bbN_+\}$ and   due to \eqref{campane100},  \eqref{duomo} is satisfied whenever \eqref{anatra} holds, thus proving our claim.
 }

$\bullet$ \rot{Proof of \eqref{campane}.}   By the form of $\vartheta$  there exists $c>0$ such that 
 \be\label{steppino}
 \vartheta(r) \leq  c   \,  \vartheta (s)  \, \qquad \forall r,s\geq 0 \text{ with } |s-r|\leq 1\,.
 \en
 From now on we restrict to \rosso{$\e \leq  {\rm diam}(\D)^{-1}$}, where ${\rm diam}(\D)$ denotes the  \ciak{Euclidean} diameter of $\D$.  
 Then, by   \eqref{steppino}, for $z\in \bbZ^d$ and  $x\in \D(z)$ we can bound 
 $\psi (\e x)\leq c\, \psi(\e V z )  $. Indeed,  since $\D(z)=\t_z \D$ and $0\in \D$, both $x$ and $\t_z 0=Vz$ belong to $\D(z)$ and therefore 
 \be\label{atreo} \big| \, |\e x|- | \e Vz| \,\big | \leq \e |x-Vz| \leq   \e \,{\rm diam}(\D)\leq 1\,.
 \en
 %\be\label{atreo}\rot{\big| \, |\e x|- | \e Vz| \,\big |  \leq   \e \,{\rm diam}(\D)\leq 1}\,. \en
The above bound and \eqref{steppino} allow to conclude that  $\psi (\e x)\leq c\, \psi(\e V z )  $ \rot{for all $x\in \D(z)$}.
By combining this result  with the identity $\mu_\o^\e( \e \D(z))= \e^d f(\theta_z\o)$ (which follows from \eqref{scaletta} and \eqref{ortica}),
we get
 \be\label{vino}
 \int _{\e \D(z)}  \psi(u) d \mu^\e_\o(u)  \leq c\,\e^d \psi(\e V z) f (\theta_z \o)  \,.
 \en
 By \eqref{atreo},  if $|\e x| \geq\ell$ and $x\in \D(z)$ then $|\e Vz | \geq   \ell-1 $.
 This observation and \eqref{vino} imply \rot{\eqref{campane}}.
 
% that 
%  \be\label{campane}
%  \int_{\bbR^d}  \psi_\ell (u)  d\mu^\e_\o(u)
%  \leq c\, \e^d \sum_{z\in \bbZ^d}\psi _{\ell -1}(\e V z)f(\theta_z \o)\,.
% \en
% 
 
 $\bullet$ \rot{Proof of \eqref{campane100}. }  
 %We now want to remove $V$ from \eqref{campane} in order to deal with a simplified expression (producing \eqref{campane100} below).
 Since $V$ is invertible, we have $c_1 |x| \leq |V x | \leq c_2 |x|$ for all    $ x\in \bbR^d$   for some constants $c_1,c_2 >0$. 
 Hence we have 
  \[
    \psi(\e V z)  \leq (1+\e c_1 |z|)^{-\b}\leq (\min\{1,c_1 \})^{-\b} (1+\e |z|)^{-\b}= (\min\{1,c_1\})^{-\b}  \rosso{\psi}    (\e z)\,,
  \]
  while $|\e V z|\geq \ell -1$ implies  $|\e z| \geq (\ell-1)/c_2$.
  Hence, from \eqref{campane}, we \rot{get \eqref{campane100}}.
\end{proof}

%%%%%%%%%%%%%%%%%%%%%%%%%%%%%%%%%%%%%%%%%%%%%%%%%%%%%%%%%%%%%%%%%%%%%%%%%%%%%%%%%%%%%%%%%%%%%%%%%%%%%%%%%%%%%%%%%%%%%%%%%%%%%%%%%%%%%%%%%%%%%%%%%%%%%%%%%%%%%%%%%%%%%%%%%%%%%%%%%%%%%%%%%%%%%%%%%%%%%%%%%%%%%%%%%%%%%%%%%%%%%%%%%%%%%%%%%%%%%%%%%%%%%%%%%%%%%%%%%%%%%%%%%%%%%%%%%%%%%%%%%%%%%%%%%%%%%%%%%%%%%%%

\section{Applications to random measures: Proof of Theorem   \ref{prop_pizza} and Lemma \ref{cinguetto_bis}}\label{WTS}

As in  the proof of Theorem \ref{teo_tartina}, Theorem \ref{prop_pizza} can be derived from  Lemma \ref{cinguetto_bis},  which we now focus on.
\rot{We set  $\psi(x):=\vartheta(|x|)$ and $\psi_\ell(x)=\vartheta(|x|)\mathds{1}(|x|\geq \ell)$. We define  $\cC=\cC_\vartheta$ as in \eqref{def_cC}.} 
% as the set \eqref{def_cC} where now  of the $\o\in \O$ satisfying \eqref{duomo} with . In order to  use some proofs of the previous section in what follows we write $\cC$ instead of $\cC_\vartheta$.
Then  Claims \ref{claim_fin},  \ref{claim_meas} \rot{and \ref{claim_inv} of the previous section} are still valid, with the same proofs (we use there only that $\vartheta$ is non-increasing).

% We concentrate on \verde{\eqref{tordo}} (the boundedness of $\int_{\bbR^d} \vartheta (|x|)d \mu_\o^\e (x)$ can be easily derived \rosso{as done} in the proof of Lemma \ref{cinguetto}).

% %i.e.~there exists a $\bbG$--invariant set $\cC\in \cF$  with $\bbP(\cC)=1$  such that \eqref{tordo} (with the present $\vartheta$) holds for all $\o\in \cC$.

It remains to prove \rot{that $\bbP(\cC)=1$}.  To this aim we will use \cite[Proposition~5.3]{TS} \rot{when $\bbG=\bbR^d$ as detailed in Section \ref{polpette} below, thus completing the proof for $\bbG=\bbR^d$. We point out that \cite[Proposition~5.3]{TS}  is thought of  only  for the action of the group $\bbG=\bbR^d$ on $\O$ and cannot be applied directly to the case $\bbG=\bbZ^d$. To overcome this problem for $\bbG=\bbZ^d$, inspired by some methods known in homogenization theory,  in Section \ref{WT2} we build a   new probability space on which the group $\bbR^d$ acts  and a new random measure,  suitably related to the original ones. The results proved for the case $\bbG=\bbR^d$ and applied to this new context will imply that $\bbP(\cC)=1$ also for $\bbG=\bbZ^d$. }

\subsection{Proof that \rot{$\bbP(\cC)=1$} for  $\bbG=\bbR^d$} \label{polpette}
\rot{Before entering the technical details we present  the main part  of the proof. 
Since $\bbG=\bbR^d$ and  $\bbP$ is ergodic,   \cite[Proposition~5.3]{TS} with $t_n:=n$ implies the following. Consider a measurable function $h:\bbR_+\to \bbR_+$ such that $h$ is   non-increasing,  is   convex on $[a,+\infty)$  for some $a>0$ and satisfies $\int _0^\infty r^{d-1} h(r)dr <+\infty$. Then, 
% to our setting.  Since  $\bbP$ is ergodic,    \cite[Proposition~5.3]{TS} implies  that,
 fixed a measurable function $f:\O\to \bbR$  with $f\in L^\a$ and $\a>1$,  for $\bbP$--a.a.~$\o$ it holds
\be\label{scarpetta}
%\lim_{n \to +\infty} n^{-d} \int_{\bbR^d} \vartheta (|x|/n) f( \theta_x \o)  dx =\bbE[f]  \int _{\bbR^d} \vartheta(|u|) du 
 \lim _{n\to +\infty} n^{-d} \int_{\bbR^d} h(|x|/n) f(\theta_ x \o) dx = 
  \bbE[f]  \int _{\bbR^d} h (|x|) dx \,.
  \en
Note that the integral in the l.h.s. of \eqref{scarpetta} is w.r.t. the Lebesgue measure and not w.r.t. $\mu_\o$. To infer from \eqref{scarpetta} some  information  on $\mu_\o$ itself,  in part similarly to   the proof of \cite[Theorem~10.2.IV]{DV}, we apply \eqref{scarpetta} with  $f(\o) :=\int _{\bbR^d} \g_\d (x) d\mu_\o(x)$ and $h(r):=\vartheta (\min\{0, r-\d\})$, 
where $(\g_\d)_{\d>0}$ is a family of mollifiers described below. This will allow us to prove that, for some set $ \cC_1\in \cF $ with $\bbP( \cC_1)=1$, for all $\o \in  \cC_1$ it holds
\be\label{portabis}
 \varlimsup _{\e\da 0 }\e^d \int_{\bbR^d} \vartheta(\e |x|)  d \mu_\o (x) \leq
 m \int_{\bbR^d} \vartheta (|x|) dx\,.
 \en  
 } 
% when $\vartheta$ is also continuous.
%  On the other hand,   it is simple to check that it is enough to prove Lemma \ref{cinguetto_bis} for functions $\vartheta$ which satisfy this  additional continuity assumption. Indeed, starting from $\vartheta$ as in Lemma \ref{cinguetto_bis}, consider the continuous  function  $r\mapsto \min\{\vartheta(r), \vartheta(a+1)\}$ which satisfies the assumptions of Lemma \ref{cinguetto_bis}. Having the claim for this other function, it is simple to get the claim for the original $\vartheta$.  Hence, from now on, we assume $\vartheta $ to be also continuous. }
  
  \rot{We postpone the proof of \eqref{portabis}   and we first explain how to conclude. The strategy is to show that  $\cC_1\cap \cB\subset \cC$, where $\cB$ is   as in Claim \ref{sale78}. Since $\bbP( \cC_1)=\bbP(\cB)=1$, this implies  that $\bbP(\cC)=1$. To show that $\cC_1\cap \cB\subset \cC$, we proceed as follows. 
From Claim \ref{sale78} it is trivial\footnote{It is enough to  approximate from above and below  the map  $y\mapsto \mathds{1} (|y| < \ell)$ by suitable functions in $C_c(\bbR^d)$.} to get for any $\o \in \cB$  and any $\ell \in \bbN$ that 
\[ \lim _{\e\da 0 }\e^d \int_{\{ \e|x| < \ell\}} \vartheta(\e |x|)  d \mu_\o (x) =
m \int_{\{|x|<  \ell\}} \vartheta (|x|) dx\,.
\]
Subtracting the above limit  \ciak{from} \eqref{portabis} and then taking the limit $\ell\uparrow +\infty$, we get
\be\label{ratto}
\varlimsup_{\ell\uparrow +\infty} \varlimsup _{\e\da 0 }\e^d \int_{\{|\e x|\geq \ell\}} \vartheta(\e |x|)  d \mu_\o (x)\leq m \varlimsup_{\ell\uparrow +\infty}  \int_{\{|x|\geq \ell\}} \vartheta (|x|) dx =0
 \en  
for any $\o \in \cB \cap  \cC_1$. Note that the equality in \eqref{ratto} is due to   the integrability of $\vartheta(|x|)$, which follows from our assumptions. Due to  \eqref{pioli}, \eqref{ratto} leads to \eqref{duomo}, hence $\o\in \cC$.
This proves that $ \cC_1\cap \cB \subset \cC$.  }

\smallskip

\rot{At this point, we have just to prove \eqref{portabis}. In order to simplify the computations below, we first show} that  we can assume $V=\bbI$ without any loss of generality. To this aim we 
define $\tilde \theta _g:= \theta_{V^{-1} g}$ and $\tilde{\tau}_g:= \tau_{V^{-1} g}$. Note that $\tilde{\tau}_g x=x+V(V^{-1} g)=x+g=x+\bbI g$ and, by Assumption 2, 
\[\mu _{\tilde{\theta}_g \o}= \mu _{\theta_{V^{-1}g} \o}= \t _{ V^{-1} g } \mu_\o= \tilde{\t}_g \mu_\o\,.
\]
Since $\bbG=\bbR^d$ 
it is then trivial to check that  the actions $(\tilde{\theta}_g)_{g\in \bbG}$ and $(\tilde{\tau}_g)_{g\in \bbG}$, together with the random measure $\mu_\o$, satisfy Assumptions 1 and 2 and  that a set $A\in \cF$ is $\bbG$-invariant for the action $(\tilde{\theta}_g)_{g\in \bbG}$ if and only if the same holds for the action $({\theta}_g)_{g\in \bbG}$. \rot{In particular, if the claim in Lemma \ref{cinguetto_bis} is valid for the new setting with the new actions, then it is valid also for the original one}.  Then, at cost to pass to the actions 
$(\tilde{\theta}_g)_{g\in \bbG}$ and $(\tilde{\tau}_g)_{g\in \bbG}$, we can (and we do) assume that $V=\bbI$.

We fix a smooth function $\g:\bbR^d \to \bbR_+$ with $\g(x)$ determined by $|x|$, with  support in $B(1):=\{x\in \bbR^d\,:\, |x|=1\}$ and satisfying $\int _{\bbR^d} \g(x) dx =1$. Given $\d>0$ we consider the mollifier $\g_\d(x):=\rot{ \d^{-d}} \g(x/\d)$ (which is a probability kernel with support in $B(\d)$).

  Similarly to   the proof of \cite[\rosso{Theorem}~10.2.IV]{DV} we consider $f:\O\to [0,+\infty)$ defined as $f(\o) :=\int _{\bbR^d} \g_\d (x) d\mu_\o(x)$,  \rot{moreover we set $h(r):= \vartheta(\min\{0, r-\d\})$ (the dependence of $f$ and $h$  from $\d$ is omitted in the notation)}.
  % Moreover we set $\vartheta_\d(r):= \vartheta(\min\{0, r-\d\})$}. 
  Since   $\bbE[ \mu_\o ( B(\d)) ^\a]<+\infty$ (by our moment assumption on $\mu_\o(\rrr{\D})$)  and  $\g_\d$ is uniformly bounded, we get that $f\in \rot{L^\a}$.  In particular, \rot{by
 \eqref{scarpetta} and by varying $\d$ in the countable set $\{1/k:k\in \bbN_+\}$},    there exists a set $\cC_1\in \cF$ such that \rot{\eqref{scarpetta} is valid}  for all $\o\in \cC_1$ \rot{and for all $\d$ as above}.
 %  as above  it holds 
 % \be\label{san}
  %\lim _{n\to +\infty} n^{-d} \int_{\bbR^d} \vartheta(|x|/n) f(\theta_ x \o) dx = 
  %\bbE[f]  \int _{\bbR^d} \vartheta (|x|) dx \,.  \en
  By stationarity $\bbE[f]= m \int _{\bbR^d} \g_\d (x) dx=m$.
On the other hand, by Assumption 2,  we can write (see  \eqref{pollice} \rot{and use that $\t_{-x}y=y-x$ as $V=\bbI$})
\[ f(\theta_x \o)= \int _{\bbR^d} \g_\d (y) d \mu_{\theta_x \o} (y)=  \int _{\bbR^d} \g_\d (y-x) d \mu_{ \o} (y)\,.
\]
%\rot{We consider the function $\vartheta_\d:\bbR_+\to \bbR_+$ defined by translating $\vartheta$ as$\vartheta_\d (r):= \vartheta ( \min \{0, r-\d\})$. Note that also $\vartheta_\d$ is non--increasing, convex on some halfline and $\int_0^\infty r^{d-1} \vartheta _\d (r) dr <+\infty$.}
Setting $z=x-y$ and using  also that $\g(z)=\g(-z)$ we get
\be\label{sole55}
\begin{split}
  \int_{\bbR^d} \rot{h}(|x|/n) f(\theta_ x \o) dx & = \int_{\bbR^d} dx\, \rot{h}(|x|/n)\int _{\bbR^d} \g_\d (x-y) d \mu_{ \o} (y)\\
& =                                                      
  \int_{\bbR^d} d \mu_\o (y)  \int_{B(\d)}  \rot{h}(|y+z|/n)  \g_\d(z) dz \,.
  \end{split}
\en
\rot{For $z\in B(\d)$ we have $-\d+|y+z|/n\leq |y|/n$ and since $\vartheta $ is non-increasing we get 
$h( |y+z|/n)\geq  \vartheta (|y|/n)$. Hence from \eqref{sole55} we get
\be\label{sole1974}
  \int_{\bbR^d} \rot{h}(|x|/n) f(\theta_ x \o) dx \geq   \int_{\bbR^d}  \vartheta(|y/n|)
d \mu_\o (y)   \,.
\en
}
%Since $\vartheta$ is convex on $[a,+\infty)$ \rot{and non-increasing}, the map $x\mapsto\vartheta (|x|)$ is convex on $\{x\in \bbR^d\,:\, |x|\geq a\}$. Hence,  for $\d\leq 1$, we have $ \int_{B(\d)}    \vartheta(|y+z|/n)  \g_\d(z) dz \geq \vartheta (|y|/n)$ for $|y|\geq  (a+1)n$. On the other hand, by the uniform continuity of $\vartheta$ on $[0,a+2]$,  for $n$ large enough we have $ \int_{B(\d)}    \vartheta(|y+z|/n)  \g_\d(z)dz \geq \vartheta(|y|/n)-\rot{c(\d)}$ for $|y|\leq (a+1)n$ \rot{and for some constant $c(\d)$ such that $\lim_{\d\da 0} c(\d)=0$}.  By combining these bounds with 
%\eqref{sole55} we conclude that 
%\be\label{paoletto} \int_{\bbR^d} \vartheta(|x|/n) f(\theta_ x \o) dx \geq
%  \int_{\bbR^d} \vartheta(|y|/n) d \mu_\o (y) - \rot{c(\d)}  \mu_\o\big( B((a+1)n) \big) \,.
  %\en
%By Claim \ref{sale78} we have 
% By \cite[Cor.~10.2.V]{DV}, for a set $\cC_2\in \cF$ with $\bbP(\cC_2)=1$, we have
%$\ell^{-d}  \mu_\o\big( B(\ell) \big)\to c(d)$ as $\ell \to +\infty$ for any $\o\in \cB$. Hence, by 
By combining \eqref{scarpetta}  with \eqref{sole1974} and using that $\bbE[f]=m$, we get for all $\rot{\o\in \cC_1}$
%:=\cC_1 \cap \cB$ and for a positive constant $C=\rosso{C(d,a)}$  
that 
 \be\label{porta}
 \varlimsup _{n\to +\infty}n^{-d} \int_{\bbR^d}  \vartheta(|x|/n)d \mu_\o (x) \leq
 \rot{m \int_{\bbR^d} h(|x|) dx} \,.
 \en
% \rot{Since the last integral equals $a(d)\vartheta(0)\int_0 ^\d r^{d-1}dr+a(d)\int_0^\infty (r+\d)^{d-1} \vartheta(r)dr$ ($a(d)$ being the $(d-1)$--dimensional measure  of $\bbS^{d-1}$), 
\rot{Trivially the last integral
    converges to $\int_{\bbR^d} \vartheta(|x|) dx$ as $\d\da 0$. 
     Since moreover $\vartheta$ is non-increasing (given $\e\in (0,1)$ take $n\in \bbN_+$ with $(n+1)^{-1} \leq \e < n^{-1}$), from \eqref{porta}  we finally get \eqref{portabis} for all $\o\in \cC_1$.}

\subsection{Proof \rot{that $\bbP(\cC)=1$}   for  $\bbG=\bbZ^d$}\label{WT2} \rot{As already mentioned, we} move from the action of the group $\bbZ^d$ to the action of the group $\bbR^d$ by a standard method in homogenization theory. In particular, below  we \rot{will} apply \rot{some} results \rot{of \cite[Section~6]{Fhom1} that we will recall along the proof} (there we treated locally bounded  atomic random measures \rosso{on $\bbR^d$}, but the results remain valid for generic locally bounded random measures \rosso{on $\bbR^d$}). 
To simplify the presentation and the notation we consider only the case  $V=\bbI$ (the treatment in \cite{Fhom1} is for all $V$). \rosso{In this case $\D=[0,1)^d$.}

%Suppose the group $\bbZ^d$ acts on the probability space $(\Omega,\cF,\cP)$ and call $(\theta_z)_{z\in \bbZ^d}$ its action. Suppose that $\cP$ is stationary and ergodic by this action. 

We set $\bar \Omega=\Omega\times[0,1)^d$ and call  $\cB$ the Borel $\sigma$--field of $[0,1)^d$. We consider the product $\s$-algebra  $\bar{\cF}=\cF\otimes\cB$ and   the product probability measure $\bar{\cP}=\cP\otimes dx$ on $\bar\Omega$. Then $(\bar\Omega,\bar{\cF},\bar{\cP})$ is a probability space.

Given $x\in\bbR^d$ let $z(x)\in\bbZ^d$ and $r(x)\in [0,1)^d$ be such that $x=z(x)+r(x)$ (they are univocally defined). Set $\bar\theta_x:\bar\Omega\to\bar\Omega$ as $\bar\theta _x (\omega,a):= (\theta _{z(x+a)}\omega,r(x+a))$. Then one can prove that $(\bar\theta_x)_{x\in\bbR^d}$ is a action of $\bbR^d$ on $\bar{\Omega}$ and $\bar{\cP}$ is stationary and ergodic for this action \rot{\cite[Section~6]{Fhom1}}.
In addition, define $\bar \mu_{(\o, a)} (\cdot) := \mu_\o( \cdot +a)$ for all $(\o,a)\in \bar \O$. Then, \rot{by \cite[Eq.~(61)]{Fhom1}}, we have again the covariant relation $\bar \mu_{\theta_x (\o,a)} = \bar \t_x \bar\mu_{(\o,a)}$, where $(\bar \t_x)_{x\in \bbR^d}$ is the action of the group $\bbR^d$ on the Euclidean space $\bbR^d$ by translations $\bar \t_x y:=\t_x= y+x$.
\rot{In conclusion,} the new setting given by the group  $\bar \bbG:=\bbR^d$,  the probability space  $(\bar\Omega,\bar{\cF},\bar{\cP})$, the actions $(\bar \theta _x)_{x\in \bar \bbG}$ and $(\bar \t_x )_{x\in \bar \bbG}$, the random measures $\bar \mu_{\bar \o}$ with $\bar \o \in \bar \O$ satisfies  Assumptions 1 and 2.

 Let us \rot{now} show that
  $\bar{\bbE} [ \bar\mu _{\bar \o}(\D)^\a]<+\infty$,
  where $\bar \bbE[\cdot]$ denotes the expectation w.r.t. $\bar\bbP$.
This follows from the bound  $\bbE [\mu _{\o}(\rrr{\D})^\a]<+\infty$ and the observation (based also on the stationarity of $\bbP$) that 
\begin{equation*}
\begin{split}
& \rrr{ \bar{\bbE} [ \bar\mu _{\bar \o}(\D)^\a] =\int _\O d\bbP(\o) \int_\D da \bar\mu_{(\o,a)}(\D)^\a=\int _\O d\bbP(\o) \int_\D da \mu_\o (\D+a)^\a}\\
  &\rrr{ =\int_\D da\bbE[\mu_\o (\D+a)^\a]=\int_\D da\bbE[\mu_\o (\D)^\a]=\bbE[\mu_\o (\D)^\a]}\,.
  \end{split}
\end{equation*}
In particular the new setting satisfies the assumptions of Lemma \ref{cinguetto_bis}. As a consequence,
 the result  obtained in Section \ref{polpette} implies that there exists a $\bar \bbG$--invariant measurable subset $\bar \cC\in \bar \cF$ with $\bar \bbP(\bar \cC)=1$  such that 
%\be\label{papavero}
\[\lim_{\ell \uparrow\rot{+} \infty}\, \varlimsup_{\e \da 0} \int_{\{|x|\geq \ell\}} \vartheta (|x|)d \bar \mu_{( \o,a) }^\e (x) =0\]
%\en
for any $(\o,a) \in \bar \cC$.
Hence, by the definition of $\bar \mu^\e_{(\o,a)} $ and by   \eqref{pollice} and \eqref{pioli}, 
 for any  $(\o,a) \in \bar \cC$  it holds
\be\label{scivolino}
\lim_{\ell \uparrow \rot{+}\infty}\, \varlimsup_{\e \da 0}\e^d  \int_{\{|\e (x-a)|\geq \ell\}} \vartheta  (\e |x-a|)d \mu_{\o } (x) =0\,. \en
%  By arguments similar to the ones used at the end of Section \ref{polpette}, from the last limit  we get that 
% \[
%   0=\lim_{\ell \uparrow \infty}\, \varlimsup_{\e \da 0}\e^d  \int_{\{|\e x|\geq \ell\}} \vartheta  (\e |x|)d \mu_{\o } (x) =\lim_{\ell \uparrow \infty}\, \varlimsup_{\e \da 0} \int_{\{|x|\geq \ell\}} \vartheta (|x|)d  \mu_{\o }^\e (x) =0
% \]
% for any $(\o,a)\in \bar \cC$. In particular, we have proved that \eqref{tordo} holds

We now define $ \cC_*:=\{\o\in \O\,:\, \exists a\in \D \text{ with } (\o,a) \in \bar \cC\}$. In general, the projection of a measurable subset in a product measure space does not need to be  measurable. We can anyway show that $\cC_*\in \cF$ as follows. 
We know that $\bar \cC$ is measurable and $\bar\bbG$--invariant. Take $(\o,a)\in \bar \cC$ and $a'\in \D$. Then \rosso{$\bar \theta _{a'-a}(\o,a)=(\theta _{z(a')}\o , r(a'))=(\theta_0\o, a')= (\o,a')$}. This observation and the $\bar\bbG$-invariance of $\bar \cC$  imply  that $(\o,a')\in \bar \cC$ for any $(\o,a)\in \bar \cC$. Hence, $\bar \cC=\cC_* \times \D$. Since sections are measurable (see \cite[Exercise~1.7.18-(iii)]{Tao})   we conclude that $\cC_*$  is measurable, i.e.~$\cC_*\in \cF$.
%$\cC:=\{\o\,:\, \int_\D  \mathds{1}_{\bar \cC} (\o,a)da >0\}$. By Fubini-Tonelli theorem, we get that $\cC\in \cF$ since the function $\o \mapsto  \int_\D  \mathds{1}_{\bar \cC} (\o,a)da$ is measurable.
Moreover, since $\bar \bbP(\bar \cC)=1$, it must be $\bbP(\cC_*)=1$ by Fubini-Tonelli Theorem.
%Finally, from the $\bar\bbG$--invariance of $\bar \cC$ we obtain the $\bbG$-invariance of $\cC$.

At this point, to conclude the proof of Claim \ref{claim_pieno} (i.e. $\bbP(\cC)=1$), it remains to show that \rosso{$\cC_*\subset\cC$}. To this aim we take  $\o\in \cC_*$. We know that \eqref{scivolino} holds for \rosso{any} $a\in \D$.
\rosso{By taking $a=0$, \eqref{scivolino} reduces to \eqref{duomo}}.
% When $\e|x|\geq \ell$ we have $\e|x-a|\geq \e|x|-\e|a| \geq \ell/2$ for $\e\leq \e_0:= \e_0(\ell, \D)$. Since we also have $\e|a| \leq \ell/4$ for $\e\leq\e_1:= \e_1(\ell, \D)$ we conclude that, for $\e \leq \e_0 \wedge \e_1$, 
% $\e|a| \leq \e|x-a|/2$ and therefore  
% $\e|x|\geq \e|x-a|- \e|a| \geq \e|x-a|/2$. Using that $\vartheta$ is non-increasing,  for $\e \leq \e_0 \wedge \e_1$  we can bound
% \[
%   \e^d  \int_{\{|\e x|\geq \ell\}} \vartheta  (\e |x|)d \mu_{\o } (x) \leq 
%   \e^d  \int_{\{|\frac{\e}{2} (x-a)|\geq \frac{\ell}{4}\}} \vartheta  (\frac{\e}{2} |x-a|)d \mu_{\o } (x)
%   \]
% and the r.h.s. goes to zero as $\e \da 0$ and afterwards $\ell\uparrow \infty$ by \eqref{scivolino}. Hence the same holds for the l.h.s. 
% As a consequence \eqref{tordo} is satisfied by all $\o\in \cC_*$ (i.e.~$\cC_*\subset \cC$).

\section{Proof of Theorem \ref{teo2}}\label{squillano}
We prove Items (i) and (ii) of Theorem \ref{teo2}. Item (i) trivially implies 
\rot{the limits \eqref{marvel0},...,\eqref{ondinoB} for $f\in C_*(\bbR^d)$}.
Let $\cB$ be as in Remark \ref{rinomato} and  Claim \ref{sale78} 
For Item (i) we define $d_c:=2d+2$  and $\O_{\rm typ}^*\in\cF$ as $\O^*_{\rm typ}:= \O_{\rm typ}\cap \cA\cap \cB\cap \cC$, where $\cA$ and $\cC$ are as in 
 Theorem \ref{teo_tartina} and Lemma \ref{cinguetto} respectively. For Item (ii) we define $d_c:=d$ and   $\O_{\rm typ}^*\in\cF$ as $\O^*_{\rm typ}:= \O_{\rm typ}\cap \cA\cap \cB\cap \cC$, where $\cA$ and $\cC$ are as
    in Corollary \ref{lollo_bronzo}.  Note that   $\O^*_{\rm typ}$ is  measurable, $\bbG$--invariant and $\bbP(\O_{\rm typ}^*)=1$.
 From now on we restrict to $\o \in \O^*_{\rm typ}$. \rot{Due to our definition of $d_c$, we need to prove \eqref{marvel0},...,\eqref{ondinoA} for all $f\in \cG(d_c/2)$, and \eqref{marvel2} and \eqref{ondinoB} for all $f\in \cG(d_c)$.}

\rot{The proof will use the following two  technical lemmas proved in Sections \ref{sec_auto1} and \ref{sec_auto2} respectively (the latter is similar to \cite[Lemma~6.1]{F1}):}

\begin{Lemma}\label{auto1}
  % Given $r>0$,   $P_t f\in \cG(r)$ and $R_\l f \in \cG(r)$ for any $f\in \cG(r)$,  $t\geq 0$ and $\l\geq 0$. More precisely,
 Let $f$ be a \rot{measurable function}. Let $t\geq 0$ and $\l>0$. If for some $C,\b>0$ it holds  $ | f(x)| \leq C (1+|x|)^{-\b}$ for all $x\in \bbR^d$,
  then for some $C'$ it holds $ |P_t f(x)| \leq C' (1+|x|)^{-\b}$ and $ |R_\l f(x)| \leq C' (1+|x|)^{-\b}$ for all $x\in \bbR^d$. 
\end{Lemma}

\begin{Lemma}\label{auto2}
Given  $h\in \rosso{\cG(d_c/2)}$ and given a family $h^\e_\o \in L^2(\mu^\e_\o)$ with $L^2(\mu^\e_\o)\ni  h^\e_\o  \to h \in L^2(m dx)$, it holds $\lim_{\e \da 0} \int_{\bbR^d} |h^\e_\o(x) - h(x) |^2 d\mu^\e_\o(x)=0$.
\end{Lemma}

\subsection{\rot{Proof of \eqref{marvel0}}} 
 \rot{The second limit in \eqref{fretta} can be restated as follows: if $L^2(\mu^\e_\o) \ni h_\e \mapsto h\in L^2(m dx)$ as $\e \da 0$, then $R_{\o,\l}^\e h_\e \to R_\l h$ as $\e \da 0$. As a consequence, by   \cite[Theorem~9.2]{ZP},
 one gets that it also holds $P_{\o,t}^\e h_\e \to P_t h$ as $\e\da 0$. Hence, to prove \eqref{marvel0}, it is enough to prove  the strong convergence  $L^2(\mu^\e_\o) \ni f \mapsto f\in L^2(m dx)$ as $\e \da 0$ for any  $f \in \cG(d_c/2)$.} 
 \rot{It is known (see e.g.~\cite[Remark~3.12]{Fhom1}) that  the strong convergence follows from the following limits as  $\e\da0$:
\be\label{amarena}
 \|f\|_{L^2(\mu^\e_\o)} \to \|f\|_{L^2(m dx)}\,, \qquad 
 L^2(\mu^\e_\o) \ni f  \rightharpoonup  f\in L^2(m dx)\,.
\en }
By Theorem \ref{teo_tartina} and  Corollary \ref{lollo_bronzo},  
  we get that  $f \in L^2(\mu^\e_\o)$ and  $\|f\|_{L^2(\mu^\e_\o)}^2=\int_{\bbR^d} f(x)^2 d\mu^\e_\o(x)$ converges to $\|f\|^2_{L^2(m dx)}=m \int _{\bbR^d} f(x)^2 dx$ as $\e\da 0$, thus proving the first limit in \eqref{amarena}.
 Since $f \psi\in C_c(\bbR^d)$ for any $\psi \in C_c(\bbR^d)$, the second limit in \eqref{amarena} follows \rot{from  Claim \ref{sale78}} and the definition of weak convergence.

\subsection{\rosso{Proof of \eqref{vinello}}} \rot{To prove that  $f\in \cG(d_c/2)$ satisfies \eqref{vinello}, it is enough to apply   \eqref{fretta}  where $u_\e= R^\e_{\o,\l}f$ and $u=R_\l f$ as we have already shown for \eqref{marvel0} the strong convergence  $L^2(\mu^\e_\o) \ni f \to f\in L^2(m dx)$ as $\e \da 0$.  }

 % We set $\O^*_{\rm typ}:= \O_{\rm typ}\cap  \cB$ with $\cB$ defined as in Remark \ref{rinomato} and Claim \ref{sale78}
 % (by reading the the proofs one realizes that  $\cB\subset \cA$, we have taken the intersection with $\cB$ so that the reader can avoid this check).

%\rot{We recall that under Assumption $1$,~$2^*$,~$3$,~$4$ the limits \eqref{marvel0} and \eqref{vinello} are valid for all $f\in C_c(\bbR^d)$ and this is indeed the content  of \cite[Theorem~4.1--(i)]{Fhom1} for which (A9) is not required.} 
% By combining the above strong convergence  $L^2(\mu^\e_\o) \ni f \mapsto f\in L^2(m dx)$ with \rot{\eqref{fretta} where $u_\e= R^\e_{\o,\l}f$ and $u=R_\l f$}, we get \eqref{vinello} for $f \in \cG(d_c/2)$. 

\subsection{\rot{Proof of \eqref{marvel1} and \eqref{ondinoA}}}
Let $f\in \cG(d_c/2)$. We can apply  \rot{Lemma} \ref{auto2} with $h_\o^\e:= P^\e_{\o,t} f$ and $h=P_t f$. Indeed, we know that $P_t f\in \cG(d_c/2)$ by \rot{Lemma} \ref{auto1} and we know that  $L^2(\mu^\e_\o)\ni  h^\e_\o  \to h \in L^2(m dx)$ by \eqref{marvel0}. Then by \rot{Lemma}  \ref{auto2} we get \eqref{marvel1}. By the same arguments, and using now \eqref{vinello}, we get \eqref{ondinoA}.

\subsection{\rot{Proof of \eqref{ondinoB}}} \rot{We follow the main ideas of  \cite[Section~20]{Fhom1}, supplemented with the technical results developed here}.
Let $f\in \cG(d_c)$.   Without loss of generality \rot{we} can take $f\geq 0$. We fix $\b>d_c$ such that
$0\leq f(x) \leq C(1+|x|)^{-\b}$ for some $C>0$. By  \rot{Lemma} \ref{auto1}, \rrr{$0\leq R_\l f(x)\leq C'(1+|x|)^{-\b}$  for some $C'>0$}.
%We can bound (as in \cite[Eq.~(168)]{Fhom1})
\rot{We set $B(n):=\{x\in \bbR^d\,:\,|x|\leq n\}$. By using Schwarz inequality to bound $ \| (R^\e_{\o,\l} f- R_\l f) \mathds{1}_{B(n)}\|_{L^1(\mu^\e_\o)}$}, we get
\be\label{visual}
  \begin{split}
  \|R^\e_{\o,\l} f- R_\l f\|_{L^1(\mu^\e_\o)}
&\leq
\| (R^\e _{\o,\l} f)  \mathds{1}_{B(n)^c} \|_{L^1(\mu^\e_\o)}
+ \| (R_\l f)\mathds{1} _{B(n)^c}\|_{L^1(\mu^\e_\o)}
    \\
    &+ \mu^\e_\o( B(n))^\frac{1}{2} \| R^\e_{\o,\l} f -R_\l f\|_{L^2(\mu^\e_\o)}\,.
  \end{split}
  \en
  The last addendum in the r.h.s. of \eqref{visual} goes to zero as $\e\da 0$ by \eqref{ondinoA} and since 
  $ \mu^\e_\o( B(n)) \to m\rrr{\ell (B(n))}$  as $\e \da 0 $ by Claim \ref{sale78}.
% At this point, the proof of \eqref{ondinoB} corresponds to \cite[page~697]{Fhom1}, starting from Eq. (168) there. The notation is the same apart that $B_n$ in \cite{Fhom1} is now $B(n)$. The only exception in the argument  is that one does not use now \cite[Lemma~20.1]{Fhom1} and the statement after Eq. (168) in \cite{Fhom1} has to be substituted as follows: one has
For the second addendum we claim that 
$\lim _{n \uparrow +\infty} \varlimsup_{\e \da 0}\|\rrr{(R_\l f)} \mathds{1}_{B(n)^c} \|_{L^1(\mu^\e_\o)}=0$. Indeed,  since  
\be\label{damasco}
  \|\rrr{(R_\l f)} \mathds{1}_{B(n)^c} \|_{L^1(\mu^\e_\o)}\leq C' \int_{B(n)^c }\rrr{ (1+|x|)^{-\b}} d\mu^\e_\o(x)\,,
\en
 one can apply Lemma \ref{cinguetto} and Corollary  \ref{lollo_bronzo} to get that the r.h.s. of \eqref{damasco} goes to zero as $\e\da 0$ and afterwards $n\uparrow +\infty$.
 
\rot{Finally let us show that  the first addendum in the  r.h.s. of \eqref{visual} is negligible as $\e \da 0$ and afterwards $n\uparrow +\infty$. Since $f\geq 0$ we can write
\be\label{summer1}
\| (R^\e _{\o,\l} f)  \mathds{1}_{B(n)^c} \|_{L^1(\mu^\e_\o)} = \|  R^\e _{\o,\l} f\| _{L^1(\mu^\e_\o)}- \| (R^\e _{\o,\l} f)  \mathds{1}_{B(n)} \|_{L^1(\mu^\e_\o)}
\,.
\en A similar formula holds for $R_\l$. The proof is then completed if we show that 
\begin{align}
& \lim_{\e \da 0} \|  R^\e _{\o,\l} f\| _{L^1(\mu^\e_\o)}=  \| R_\l f \| _{L^1 (m dx) }\,,\label{summer2} \\
&  \lim_{\e \da 0} \| (R^\e _{\o,\l} f)  \mathds{1}_{B(n)} \|_{L^1(\mu^\e_\o)}=  \| (R_\l f)   \mathds{1}_{B(n)}  \| _{L^1 (m dx) } \,.\label{summer3}
\end{align}
Indeed, by combining \eqref{summer1}, \eqref{summer2} and \eqref{summer3}, we get $\lim_{\e \da 0} \| (R^\e _{\o,\l} f)  \mathds{1}_{B(n)^c} \|_{L^1(\mu^\e_\o)} =  \| (R_\l f)   \mathds{1}_{B(n)^c}  \| _{L^1 (m dx) }$
and the last expression goes to zero as $n\uparrow +\infty$ by the integrability of $R_\l f$.  }

\rot{Let us prove \eqref{summer2}. Since $f\geq 0$ and by the reversibility of $\mu^\e_\o$ for the diffusively rescaled random walk due to  Item (iii) of Assumption 4,  we have  \[
\|  R^\e _{\o,\l} f\| _{L^1(\mu^\e_\o)}=\int_0^\infty ds \,e^{-\l s} \int _{\bbR^d} d \mu^\e_\o(x) P^\e_{\o,s} f (x)= \l^{-1}\int _{\bbR^d} d \mu_\o^\e (x) f(x) \,.
\]
As $f\in \cG(d_c)$, by  Theorem \ref{teo_tartina} and Corollary \ref{lollo_bronzo}, the last term  converges to $ \l^{-1}m \int _{\bbR^d} f(x)d x= \| R_\l f \| _{L^1 (m dx) }$ as $\e \da 0$. This completes the proof of \eqref{summer2}.}

\rot{Let us prove  \eqref{summer3}. Since  $f\geq 0$ we have
\be
\begin{split}
&\left |  \| (R^\e _{\o,\l} f)  \mathds{1}_{B(n)} \|_{L^1(\mu^\e_\o)}-  \| (R_\l f)   \mathds{1}_{B(n)}  \| _{L^1 (m dx) } 
 \right |\\
&  =\left| \la R^\e _{\o,\l} f- R_\l f  , \mathds{1}_{B(n)} \ra_{L^2(\mu^\e_\o)}\right|
 \leq \mu^\e_\o(B_n)^{1/2}  \| R^\e _{\o,\l} f- R_\l f \| _{L^2(\mu^\e_\o)}
 \,.
\end{split}
\en
The r.h.s. goes to zero as $\e \da 0$ being equal to the last term in \eqref{visual}.}

%the proof that the first addendum in the  r.h.s. of \eqref{visual} is negligible as $\e \da 0$ and afterwards $n\uparrow +\infty$ \rot{is given in \cite{Fhom1} from Eq. (169)  until the end of the proof of Lemma  20.1 there. That part of \cite{Fhom1} is fully readable by the interested reader since we have intentionally  used exactly  the same notation with exception that we have called $\cB$ the set $\cA[1]$ in \cite{Fhom1}.}

\subsection{\rot{Proof of \eqref{marvel2}}} Let $f\in \cG(d_c)$.   
In light of the \rot{arguments used for \eqref{ondinoB}}, one can similarly derive \eqref{marvel2} from \eqref{marvel1} (similarly to the proof of \cite[Corollary~2.5]{F1}).

\subsection{Proof of Lemma  \ref{auto1}}\label{sec_auto1}
\rot{In what follows, given $a\in \bbR^m$,  $|a |$ will denote its \ciak{Euclidean} norm  in $\bbR^m$.  Moreover, below constants of type $c$ are  positive, depend only on $d,\b,\l$  and can change from line to line. If a constant  can depend also from  $t$ in addition to $d,\b,\l$ , we  write $c(t)$. }

\rot{Using that $\|P_t f\|_\infty\leq \|f\|_\infty$ and $\| R_\l f \|_\infty \leq \|f\|_\infty /\l$, it is enough to prove the bounds on $|P_t f(x)|$ and  $|R_\l f(x)|$ for $x$ large. We will use this observation later.}

\rot{Since  $D$ is a positive semidefinite symmetric matrix, it can be diagonalized  by an orthonormal basis of $\bbR^d$. 
Let us call $k$  the dimension of ${\rm Ker}(D)^\perp$. If $k=0$, then $P_t f(x) = f(x)$ and $R_\l f(x)= f(x)/\l$ and  trivially the lemma is  valid in this case. Let us now take $k \geq 1$. Recall that $P_t$ is the Markov semigroup associated to the Brownian motion on $\bbR^d$ with diffusion matrix $2D$. If for example $D e_i=1$ for $1\leq i \leq k$ and $D e_i=0$ for $k<i\leq d$, then  we have 
\be\label{flauto25}
P_t f(w,z) := \frac{1}{ (4 \pi t )^{k/2} }
\int  _{\bbR^k}    e^{  -       \frac{|w-y|^2}{4 t}    } f(y,z) dy\; \;\;\;\; \forall (w,z) \in \bbR^k\times \bbR^{d-k}=\bbR^d\,.
\en
The general form of $P_t f$ is similar to \eqref{flauto25}  in a different orthonormal basis.}

 \rot{In addition note that by our assumptions   and due to \eqref{flauto25}, we have $|P_t f(w,z) | \leq C (4 \pi t )^{-k/2} 
\int  _{\bbR^k}    e^{  -       \frac{|w-y|^2}{4 t}    }  (1+|(y,z)|) ^{-\b} dy$.  Let us set
\[   A(w,z,t):= t^{-\frac{k}{2}} \int_{\bbR^k} e^{-\frac{|w-y|^2}{t}}(1+ |(y,z) |)^{-\b}dy\; \;\;\;\; \forall (w,z) \in \bbR^k\times \bbR^{d-k}=\bbR^d\,.\]
We write $A(x,t)$ for $A(w,z,t)$ when $x=(w,z)$. Then, apart from  some rescaling and some multiplicative constants,  the lemma is proved if we show that,  for some $c(t),c$ and  for all $x\in \bbR^d$ with $|x|  $  large enough,  it holds 
%$| A(w,z,t)| \leq  C' (1+ |(w,z) |)^{-\b}$ and  $\int_0^\infty e^{-\l t}| A(w,z,t)| \leq  C' (1+ |(w,z) |)^{-\b}$.
\be\label{caldo} | A(x,t)| \leq c(t) (1+ |x|)^{-\b} \;\text{ and }\; \int_0^\infty e^{-\l t}| A(x,t)| dt\leq  c (1+ |x |)^{-\b} \,.
\en
}

%\rot{Below constants of type $c$ are positive, \rot{depend only on $d,\b,t,\l$}  and can change from line to line. 
\rot{We first point out that \eqref{caldo}  can be easily derived when  $x=(w,z)\in \bbR^k\times \bbR ^{d-k}$ and $|w|\leq |z|$. Indeed, it is enough to use in the definition of $A(w,z,t)$ that  $ (1+|(y,z)|) ^{-\b}\leq (1+|z|)^{-\b}\leq  c (1+|x|)^{-\b}$
since 
 $ |x| \leq \sqrt{2} |z|$. %This bound and the definition of $A(w,z,t)$ trivially implies that $A(w,z,t)\leq c (1+|z|)^{-\b} \leq c' (1+|x|)^{-\b}$.   the first bound in \eqref{caldo} follows by estimating in the definition of $A(w,z,t)$ the term $ (1+|(y,z)|) ^{-\b} $ by $ this case we have 
}

 %Below constants of type $c$ are positive, \rot{depend only on $d,\b,t,\l$}  and can change from line to line. 
 
 \rot{Let us consider now the case $x=(w,z)$ with $|w| >|z|$.  In this case $|w|\leq |x|\leq \sqrt{2}|w|$.  Using in the definition of $A(w,z,t)$ that 
 $ (1+|(y,z)|) ^{-\b}\leq (1+|y|)^{-\b}$  and at the end that
  $(1+|w|)^{-\b} \leq c (1+|x|)^{-\b}$, to prove \eqref{caldo} it is enough to show  for  $w\in \bbR^k$ and $|w|$ large enough (let us say for  $|w|\geq 1$) that 
 \be\label{caldobis} | C(w,t)| \leq  c(t) (1+ |w|)^{-\b} \;\text{ and }\; \int_0^\infty e^{-\l t}| C(w,t)| dt\leq  c(1+ |w |)^{-\b} \,,
\en
where $C(\rot{w},t):= t^{-\frac{k}{2}} \int_{\bbR^k} e^{-\frac{|\rot{w}-y|^2}{t}}(1+ |y|)^{-\b}dy$. Let us set  }
 \begin{align*}
    %&     C(\rot{w},t):= t^{-\frac{k}{2}} \int_{\bbR^k} e^{-\frac{|\rot{w}-y|^2}{t}}(1+ |y|)^{-\b}dy\,, \\
    & C_1(\rot{w},t):=t^{-\frac{k}{2}} \int_{\{|y|\leq |\rot{w}|/2\} } e^{-\frac{|\rot{w}-y|^2}{t}}(1+ |y|)^{-\b}dy\,,\\
     & C_2(\rot{w},t):=t^{-\frac{k}{2}} \int_{\{|y|> |\rot{w}|/2\} } e^{-\frac{|\rot{w}-y|^2}{t}}(1+ |y|)^{-\b}dy\,.
  \end{align*}
  Then $C(\rot{w},t)=C_1(\rot{w},t)+C_2(\rot{w},t)$ and 
  \begin{align}
   &  C_1(\rot{w},t)\leq t^{-\frac{k}{2}}
     e^{-\frac{|\rot{w}|^2}{4 t}} \int_{\{|y|\leq |\rot{w}|/2\} } 1 dy\leq       c t^{-\frac{k}{2}}  e^{-\frac{|\rot{w}|^2}{4 t}} (1+|\rot{w}|)^k\,,\label{lillo1}
    \\
  & C_2(\rot{w},t)\leq (1+ |\rot{w}|/2)^{-\b} t^{-\frac{k}{2}} \int_{\bbR^k} e^{-\frac{|\rot{w}-y|^2}{t}}d\rot{y}\leq c (1+ |\rot{w}|)^{-\b} \,.\label{lillo2}
  \end{align}
 \rot{The estimates \eqref{lillo1} and \eqref{lillo2} imply the first bound in \eqref{caldobis}}.
  %Reasoning similarly (by bounding $P_t f(x)$ by an integral similar to $C(x,1)$), we get  the claim for $P_t f(x)$.

     We move to the \rot{second bound in \eqref{caldobis}. Recall that $|w|\geq 1$.}  
     %Since $R_\l f(x)=\int _0^\infty e^{-\l t} P_t f (x)dt $, we have $\|R_\l\|_\infty \leq \|f\|_{\infty}/\l$. Hence, to prove that $| R_\l f(x) | \leq C' (1+|x|)^{-\b}$, we can restrict to $|x|$ large enough.
%  Let us take $|x|\geq 1$.
     We can bound $  \int _0^\infty   e^{-\l t} t^{-\frac{k}{2}}  e^{-\frac{|\rot{w}|^2}{4 t}}dt $ by treating separately the  integral over $\{ t>|\rot{w}|\}$  and the integral over
 $\{0 \leq t\leq |\rot{w}|\}$. The former is bounded by $\int_{|\rot{w}|}^\infty e^{-\l t}dt =
e^{-\rot{\l |w|}}/\l$. The latter is bounded by 
$  e^{-\frac{|\rot{w}|}{8 }}  \int_0^{|\rot{w}|} t^{-\frac{k}{2}} e^{-\frac{|\rot{w}|^2}{8 t}}  dt$,  
\rot{which   is} bounded by 
\[   e^{-\frac{|\rot{w}|}{8 }}  |\rot{w}|^{-k} \int_0^{|\rot{w}|}
  (t/|\rot{w}|^2)^{-\frac{k}{2}} e^{-\frac{|\rot{w}|^2}{8 t}}  dt
  \leq c  e^{-\frac{|\rot{w}|}{8}}|\rot{w}|^{1-\rot{k}}
  \]
 (we have used that $s^{-\frac{k}{2}}e^{ -\frac{1}{8s}}$ is bounded  on $\bbR_+$).
  \rot{The above bounds imply that  $ \int _0^\infty   e^{-\l t} t^{-\frac{k}{2}}  e^{-\frac{|\rot{w}|^2}{4 t}}dt  \leq c e^{-c' |w|}$ for $|w| \geq 1$}.
By combining \rot{this last estimate} with \eqref{lillo1} and \eqref{lillo2}, we get 
$\int _0^\infty e^{-\l t} C(\rot{w},t) dt \leq c (1+|\rot{w}|)^{-\b}$ for $|\rot{w}|\geq 1$. 

\subsection{Proof of Lemma \ref{auto2}}\label{sec_auto2} \rot{The proof is similar to the proof of   \cite[Lemma~6.1]{F1}, where we considered  random walks on $\bbZ^d$ for which  it was  immediate to check  the ergodic theorems  with weights proved here. We give the proof to make clear where the ergodic issues  play an important role.}

  Since $\rosso{h\in \cG(d_c/2)}$ \rot{and therefore $h^2\in \cG(d_c)$,}  by Theorem \ref{teo_tartina} and \rosso{Corollary \ref{lollo_bronzo}},  we have $\int _{\bbR^d} h(x)^2 d\mu^\e_\o(x) \to m \int _{\bbR^d} h(x)^2 dx$ as $\e \da 0$.
Let us show that \rot{also}  $\int _{\bbR^d} h^\e_\o (x)^2 d\mu^\e_\o(x) \to m \int _{\bbR^d} h(x)^2 dx$ \rot{as $\e \da 0$}.  To get this limit it is enough to observe that, 
 by Remark \ref{rinomato},   $h^\e_\o \rightharpoonup h$ and this allows to take 
 $g_\e:=h^\e_\o$ and $g:=h$ when applying  Definition \ref{debole_forte} to the strong convergence 
 $L^2(\mu^\e_\o)\ni  h^\e_\o  \to h \in L^2(m dx)$. This implies that $\int _{\bbR^d} h^\e_\o (x)^2 d\mu^\e_\o(x) \to m \int _{\bbR^d} h(x)^2 dx$.

As $\int _{\bbR^d} h(x)^2 d\mu^\e_\o(x) \to m \int _{\bbR^d} h(x)^2 dx$ and
 $\int _{\bbR^d} h^\e_\o (x)^2 d\mu^\e_\o(x) \to m \int _{\bbR^d} h(x)^2 dx$, 
to conclude the proof of our claim
   we  show  that $ \int_{\bbR^d} h^\e_\o(x)  h(x)  d\mu^\e_\o(x)\to m \int_{\bbR^d} h(x)^2 dx$.
   Since $h \in \cG(d_c/2)$ we have $|h(x) | \leq C (1+|x|)^{-\b}$  for all $x\in \bbR^d$, for some $C>0$ and  $\b >d_c/2$.
Given an integer $\ell>0$ we fix a function $g_\ell\in C_c(\bbR^d)$ such that $g_\ell (x)= h(x)$ for $|x| \leq \ell$ and $|g_\ell(x) | \leq C (1+|x|) ^{-\b}$ for all $x\in \bbR^d$.
    Then, using the weak convergence  $h^\e_\o \rightharpoonup h$, we get
    \begin{equation*}%\label{siria1}
    \lim_{\e \da 0}  \int_{\bbR^d} h^\e_\o(x)  g_\ell(x)  d\mu^\e_\o(x)= m \int_{\bbR^d}h(x)  g_\ell (x)  dx\,,
    \end{equation*}
    which trivially implies that
\begin{equation}\label{siria1}
   \lim_{\ell \uparrow+\infty} \lim_{\e \da 0}  \int_{\bbR^d} h^\e_\o(x)  g_\ell(x)  d\mu^\e_\o(x)= m \int_{\bbR^d}h(x)^2  dx\,.
    \end{equation}
    On the other hand, by Schwarz inequality and our bounds on $h$ and $g_\ell$ and  recalling that $B(\ell):=\{x\in \bbR^d\,:\,|x|\leq \ell\}$, we can estimate
   \begin{multline}\label{cavallino}
    %\begin{equation}
   % \begin{split}
      \Big| \int_{\bbR^d} h^\e_\o(x) \bigl(h(x)- g_\ell(x)\bigr)  d\mu^\e_\o(x)\Big| \leq \| h^\e_\o \|_{L^2(\mu^\e_\o)}  \| h(x)- g_\ell(x)\|_{L^2(\mu^\e_\o)}  \\
                                                                                    \leq 2  C  \| h^\e_\o \|_{L^2(\mu^\e_\o)}\|  \mathds{1}_{\rot{B(\ell)^c}} (x) (1+|x|)^{-\b} \|_{L^2(\mu^\e_\o)}\,.
                                                                                    \end{multline}
    %\end{split}
    %\end{equation}
 \rot{Since $2\b>d_c$ and due to Lemma \ref{cinguetto} and Corollary \ref{lollo_bronzo}, we have that 
 \[\varlimsup_{\ell\uparrow +\infty }\lim _{\e \da 0}\|  \mathds{1}_{\rot{B(\ell)^c}} (x) (1+|x|)^{-\b} \|^2_{L^2(\mu^\e_\o)}
 =\varlimsup_{\ell\uparrow +\infty }\lim _{\e \da 0} \int _{\{|x| \geq \ell\}  }(1+|x|)^{-2\b} d\mu^\e_\o(x) =0\,.
 \]
 }
    By the above estimate, \rot{\eqref{cavallino} and}
    % Lemma \ref{cinguetto} and Corollary \ref{lollo_bronzo} and 
    since $\varlimsup_{\e \da 0}  \| h^\e_\o \|_{L^2(\mu^\e_\o)}<+\infty$ by  \rot{our} assumption that \rot{$h^\e_\o \to h$},  we conclude that
    \be\label{siria2}
    \lim_{\ell\uparrow+\infty}\varlimsup_{\e \da 0}
    \Big| \int_{\bbR^d} h^\e_\o(x) \bigl(h(x)- g_\ell(x)\bigr)  d\mu^\e_\o(x)\Big|=0\,.
    \en
  To  \rot{get that  $\int_{\bbR^d} h^\e_\o(x) h(x)  d\mu^\e_\o(x) \to 
 m \int_{\bbR^d} h(x)^2 dx$ as $\e\da 0$,}  it is  now enough to combine \eqref{siria1} and \eqref{siria2}.

\appendix

\section{Proof of Claim \ref{sale78}}\label{WT1} We   take $\bbG=\bbR^d$ (the case $\bbG=\bbZ^d$ can be done by similar arguments).  Let $\cB\subset \O$ be  the measurable set given by the $\o\in\O$ for which the limit \eqref{piazza_fiume} holds with  $\varphi$ replaced by any indicator function $\mathds{1}_{(a,b]}$, where $(a,b]:= \prod_{i=1}^d (a_i,b_i]$,  $a,b\in \bbQ^d$, $a_i<b_i$ for  $i=1,\dots,d$. 
  By 
  \cite[Theorem~10.2.IV]{DV} (based on  the  results in \cite{T1972})
  we get that $\bbP(\cB)=1$.
  Since any function in  $ C_c(\bbR^d)$ with support in some  ball $B(r)$ can be approximated from above and from below by linear combinations of indicator functions   $\mathds{1}_{(a,b]}$ as above with $(a,b]\subset B(r+1)$ and since the approximation can be done  with arbitrarily small error in uniform distance, it is simple to get that  \eqref{piazza_fiume} holds  for any  $\o \in \cB$ and any function in $C_c(\bbR^d)$.

  Let us prove that $\cB$ is $\bbG$--invariant. Take $\o\in \cB$.  \rot{Recall that $\mu_{\theta_g \o}=\t_g \o$ by Assumption $2$.} Since by \eqref{pollice} and \eqref{pioli}
  \[ 
    \int_{\bbR^d}  f(x)d \mu^\e _{\theta_g\o}(x) = \e^d \int_{\bbR^d}f( \e(\t_{-g}y)) d\mu_\o(y)=  \e^d \int_{\bbR^d}f( \e y - \e V g) d\mu_\o(y)\]
  and $f\in C_c(\bbR^d)$ is uniformly continuous, we get that
  $\int_{\bbR^d}  f(x)d \mu^\e _{\theta_g\o}(x)\to m \int_{\bbR^d} f(x) dx $ as $\e \da 0$ for any $f\in C_c(\bbR^d)$ (we use that \eqref{piazza_fiume} holds for functions in $C_c(\bbR^d)$ since $\o\in\cB$). By suitably approximating by functions in $C_c(\bbR^d)$    any indicator function $\mathds{1}_{(a,b]}$ with $a,b$ as  above, we then get that   $\int_{\bbR^d}  \mathds{1}_{(a,b]}(x)d \mu^\e _{\theta_g\o}(x)\to m \int_{\bbR^d}  \mathds{1}_{(a,b]}(x)  dx $ as $\e \da 0$ and for all $a,b\in \bbQ^d$ with $a_i < b_i$. Hence $\theta_g\o\in \cB$. This concludes  the proof that 
  $\cB$ is $\bbG$--invariant.

 \medskip

\noindent
{\bf Acknowledgements}.  I thank  Prof.~A. Tempelman for useful discussions. \rot{I thank the anonymous \ciak{referees} for stimulating  comments and  corrections. As  stated at the beginning, I warmly thank Francis Comets}
% I thank Pierpaolo Gabrielli for some pictures in Figure \ref{grafene}. 
%This work has been thought and  written at home (in Codroipo and in Rome). I thank my family for the nice atmosphere.I thank my sons Lorenzo and Pierpaolo for their support on 1st July 2021.

\end{document}